\documentclass{amsart}

\usepackage{amsmath}
\usepackage{amssymb,amsfonts,amsthm}
\usepackage{cite}
\usepackage{graphicx}
\usepackage[colorlinks=true, allcolors=blue]{hyperref}
\usepackage[all]{xy}
\usepackage{color,xcolor}

\usepackage{mathrsfs}
\usepackage[all]{xy}
\usepackage{mathabx} 
\usepackage{mathtools}
\usepackage{mathbbol}

\usepackage{wasysym}
\usepackage{cancel,ulem}

\usepackage{mathtools}
\usepackage{mathbbol}
\usepackage{tikz}
\usetikzlibrary{decorations, decorations.markings} 

\usepackage{ifthen}

\newcommand\abs[1]{\left\lvert#1\right\rvert}
\newcommand\bracket[1]{\left\langle#1\right\rangle}

\newtheorem{theorem}{Theorem}[section]
\newtheorem{lemma}[theorem]{Lemma}
\newtheorem{proposition}[theorem]{Proposition}
\newtheorem{corollary}[theorem]{Corollary}

\theoremstyle{definition}
\newtheorem{defn}[theorem]{Definition}

\theoremstyle{remark}
\newtheorem{remark}[theorem]{Remark}
\newtheorem{example}[theorem]{Example}
\newtheorem*{claim}{Claim}
\newtheorem{fact}{Fact}
\newtheorem{question}{Question}

\def\Z{\mathbb Z}
\def\R{\mathbb R}
\def\N{\mathbb N}

\def\G{\mathcal G}

\def\Isom{\mathrm{Isom}}

\def\stab{\mathrm{Stab}}

\def\supp{\mathrm{supp}}

\title{Tits Alternative in groups with proper product actions on proper Gromov-hyperbolic spaces}

\author{Jiaqi Cui}
\address{School of Mathematical Sciences, East China Normal University, Shanghai 200241, China P. R.}
\email{51275500053@stu.ecnu.edu.cn}

\author{Renxing Wan}
\address{School of Mathematical Sciences,  Key Laboratory of MEA (Ministry of Education) \& Shanghai Key Laboratory of PMMP,  East China Normal University, Shanghai 200241, China P. R.}
\email{rxwan@math.ecnu.edu.cn}

\keywords{proper actions, Tits Alternative, Gromov-hyperbolic spaces, quasi-trees, locally finite trees}

\begin{document}

\begin{abstract}
    In this paper, we study groups with property (PPH), i.e., there exist finitely many proper Gromov-hyperbolic spaces $X_1,\ldots, X_l$ on which $G$ acts cocompactly such that the diagonal action of $G$ on the $\ell^1$-product $\prod_{i=1}^lX_i$ is proper. We show that any finitely generated subgroup of a finitely generated group with property (PPH) either is amenable or contains $F_2$. 

    Furthermore, we study groups with property (PPT), i.e., groups with property (PPH) so that $X_1,\cdots,X_l$ are all proper quasi-trees. We show that any finitely generated subgroup of a finitely generated group with property (PPT) either is virtually (locally-finite)-by-$\mathbb{Z}^n$ or contains $F_2$. Additionally, we establish that for a non-elementary hyperbolic group \(G\), \(G\) admits a proper diagonal action on a finite product of regular trees if and only if \(G\) has property (PPT). This result transforms a question posed by Button \cite{But19} into the problem of whether every non-elementary hyperbolic group has property (PPT).
\end{abstract}
\maketitle

\section{Introduction}

\subsection{Tits Alternative for groups with property (PPH)}
Hyperbolic groups are fundamental research objects in Geometric Group Theory. It is well-known that a finitely generated hyperbolic group acts properly and cocompactly on a proper Gromov-hyperbolic space. Based on this fact, we consider a generalization of hyperbolic groups as follows. Let $X_1,\ldots, X_l$ be unbounded proper Gromov-hyperbolic spaces. 
A group $G$ is said to have \textit{property (PPH)} (standing for proper product action on proper Gromov-hyperbolic spaces) on $X_1,\ldots, X_l$ if $G$ acts isometrically and cocompactly on each $X_i$ and the induced diagonal action $G\curvearrowright \prod_{i=1}^lX_i$ with $\ell^1$-metric is proper. A group $G$ has property (PPH) if there exist proper Gromov-hyperbolic spaces $X_1,\ldots, X_l$ such that $G$ has property (PPH) on $X_1,\ldots, X_l$. Examples of groups with property (PPH) include a finite direct product of hyperbolic groups, many kinds of solvable groups (for example, solvable Baumslag-Solitar groups; see Lemma \ref{Lem: BSHasPPH}) and the following groups with property (PPT). We also remark that if each hyperbolic space in the above definition of property (PPH) is not necessarily proper, then we say $G$ has \textit{property (PH')}. This notion was studied by Tao and the second author in \cite{TW25}. 

We prove that finitely generated subgroups of groups with property (PPH) satisfy weak Tits Alternative:
\begin{theorem}[Theorem~\ref{MainThm}]\label{IntroThm: TA}
        Let $G$ be a finitely generated group with property (PPH). Then any finitely generated subgroup of $G$ is either amenable or contains a non-abelian free subgroup $F_2$.
\end{theorem}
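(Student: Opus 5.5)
Let $H\le G$ be finitely generated and suppose $H$ contains no $F_2$. The approach is to restrict each action $G\curvearrowright X_i$ to $H$ and use Gromov's classification of isometric actions on hyperbolic spaces. If for some $i$ the action $H\curvearrowright X_i$ is of general type, with two independent loxodromics, then the ping-pong lemma gives $F_2\le H$. So we may assume every action $H\curvearrowright X_i$ is elliptic (bounded orbits), horocyclic (parabolic), lineal, or focal (quasi-parabolic). In each of these cases $H$ either fixes a point of $\partial X_i$, or preserves a pair of points of $\partial X_i$ (lineal case), or has bounded orbits. Passing to an index-$\le 2$ subgroup in the lineal case, we may take a finite-index subgroup $H_0\le H$ such that, for every $i$, either $H_0$ has bounded orbits in $X_i$ or $H_0$ fixes some $\xi_i\in\partial X_i$. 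Since amenability passes from $H_0$ to $H$, it suffices to show $H_0$ is amenable.

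For each $i$ in the boundary-fixing case, consider the Busemann quasi-character $\beta_i\colon H_0\to\R$ at $\xi_i$. Its homogenization is a genuine homomorphism, because $H_0$ fixes $\xi_i$ (Manning / Caprace--Cornulier--Monod--Tessera). Put $\beta=(\beta_i)_i\colon H_0\to\R^k$ and let $K=\ker\beta$. Then $H_0/K$ is abelian, so it suffices to show $K$ is amenable. I expect this to be the key step and the main obstacle. The goal is to show $K$ is locally finite, or at least that every finitely generated subgroup of $K$ is finite.

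Here is how I would try to prove that. Take a finitely generated $L\le K$. On each $X_i$ where $H_0$ had bounded orbits, $L$ also has bounded orbits. On each $X_i$ with fixed point $\xi_i$, every element of $L$ has zero Busemann translation, so no element of $L$ is loxodromic. Hence the action $L\curvearrowright X_i$ is either elliptic or horocyclic. I would first try to rule out the horocyclic case using properness of $X_i$ and cocompactness of $G\curvearrowright X_i$. The idea is that a horocyclic action of a finitely generated group preserving the horoballs at $\xi_i$ with bounded displacement of generators should, by a compactness/limiting argument on the proper space $X_i$, give bounded orbits on some horosphere. Then, using the cocompact $G$-action, quasi-horospheres near $\xi_i$ are coarsely controlled. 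If this fails, the fallback is to show directly that $L$ moves a base point boundedly in each factor by choosing the base point deep in the horoball where the generators of $L$ displace little, uniformly in all factors simultaneously. Once $L$ has a bounded orbit $L\cdot(x_1,\dots,x_l)$ in $\prod X_i$, properness of the diagonal $G$-action forces $L$ to be finite. Thus $K$ is locally finite, hence amenable, and $H_0$ is (locally finite)-by-abelian, hence amenable. So $H$ is amenable.

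One subtlety must be checked: the horocyclic case for finitely generated subgroups. In a proper cocompact hyperbolic space, a finitely generated group acting parabolically with no loxodromics can still have unbounded orbits, as with $\mathbb{Z}$ acting parabolically on $\mathbb{H}^2$. That $\mathbb{Z}$ is amenable, though, so the argument must be phrased as an induction or limit: show that $L$ acts with bounded orbits on the product after replacing it by the subgroup of elements with sublinear displacement. An alternative is to use that horocyclic actions on proper spaces have amenable image in the locally compact group $\Isom(X_i)$, via the amenable stabilizer of a boundary point. Concretely, the closure of the image of $H_0$ in each $\Isom(X_i)$ fixes a boundary point and is therefore amenable, since point stabilizers of the boundary in cocompact proper hyperbolic spaces are amenable (Caprace--Cornulier--Monod--Tessera). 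Then $H_0$ embeds as a discrete subgroup of the amenable locally compact group $\prod_i \overline{H_0}^{\Isom(X_i)}$, by properness of the diagonal action. Closed subgroups of amenable locally compact groups are amenable, so $H_0$ is amenable. I would make this closed-subgroup argument the main line, and keep the Busemann analysis only as a backup.
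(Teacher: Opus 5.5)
Your main-line argument (the closed-subgroup one) is correct and is essentially the paper's proof via Fact~\ref{Fact: GTandQP}(1) and Lemma~\ref{Lem: NoGTImpAme}. In a general type factor, ping-pong gives $F_2$. Otherwise a finite-index subgroup has image in each non-elliptic factor inside a boundary-point stabilizer, and Lemma~\ref{Lem: Amenable} together with the finite-kernel homomorphism into $\prod_i\Isom(X_i)$ gives amenability. Two small fixes: drop the elliptic factors by Lemma~\ref{Lem: NoElliptic} or note that their closures are compact, and say ``finite kernel'' rather than ``embeds''.

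Your extra observation that properness makes the image discrete, hence closed, in the amenable locally compact group $\prod_i\Isom(X_i)_{\xi_i}$ is exactly what lets you pass from the topological amenability in Lemma~\ref{Lem: Amenable} to amenability of the discrete group. The paper passes over this point when it applies that lemma to the possibly non-closed projections $H_i$. You are also right to discard the Busemann backup: already $\langle a\rangle\le BS(1,n)$ acts horocyclically on $\mathbb H^2$, so $\ker\beta$ need not be locally finite.
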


Actually, our Theorem~\ref{MainThm} reveal more details regarding the structure of finitely generated subgroups. For the sake of simplicity, we have thus presented Theorem \ref{IntroThm: TA} in its current form.

\subsection{Tits Alternative for groups with property (PPT)}
A group $G$ is said to have \textit{property (PPT)} if $G$ has property (PPH) on proper quasi-trees $X_1,\ldots, X_l$. Examples of groups with property (PPT) include: (i) a finite direct product of free groups, (ii) lamplighter groups (cf. Lemma \ref{Lem: LamplighterGp}) and (iii) BMW groups \cite{Cap19} (in this case, $l=2$). 

Finitely generated subgroups of groups with property (PPT) have more precise structures. A group is called \textit{locally finite} if any finitely generated subgroup is finite. 
\begin{theorem}[Proposition \ref{Prop: NoPara}]\label{IntroThm: TA PPT}
    Let $G$ be a finitely generated group with property (PPT). Then any finitely generated subgroup of $G$ is either virtually (locally finite)-by-$\Z^n$ or contains a non-abelian free subgroup $F_2$.
\end{theorem}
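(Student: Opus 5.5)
Let $H\le G$ be finitely generated, and suppose $H$ contains no copy of $F_2$. I would study the restricted actions $H\curvearrowright X_i$ one factor at a time, using the classification of isometric group actions on hyperbolic spaces (elliptic, horocyclic/parabolic, lineal, focal/quasi-parabolic, general type).

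\textbf{Step 1: pin down the type of each action.}
\begin{itemize}
\item If some $H\curvearrowright X_i$ is of general type, the ping-pong lemma applied to two independent loxodromics gives $F_2\le H$. So every action is elliptic, parabolic, lineal or focal.
\item The focal case should be excluded next. In a proper quasi-tree, a focal action fixing a boundary point $\xi$ yields two loxodromics with a common endpoint $\xi$ and distinct other endpoints. I expect a tree-like ping-pong on horoballs/branches at $\xi$ to produce a free subsemigroup. Upgrading this to $F_2$ needs the properness of the product action.
\item If that upgrade fails, I would instead use the Busemann quasi-character $\beta_i\colon H\to\R$ at the fixed point, which is a homomorphism up to bounded error. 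Its kernel would have to be handled inductively.
\end{itemize}

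\textbf{Step 2: rule out parabolic actions.} This is the content of the proposition's label, "NoPara", and is the step I expect to be the main obstacle. The claim is that a finitely generated group acting on a proper quasi-tree cannot act parabolically. The plan:
\begin{itemize}
\item Pass via Manning's bottleneck criterion to a quasi-isometric locally finite tree, so that $H$ fixes an end $\xi$.
\item For a finitely generated group fixing an end of a locally finite tree, take a finite generating set $S$. The Busemann function $\beta_\xi$ is a genuine homomorphism on $H$. If it vanishes, each generator fixes a horoball pointwise deep enough toward $\xi$, so the whole of $H$ fixes a vertex and the action is elliptic, not parabolic.
\item The difficulty is that $X_i$ is only quasi-isometric to a tree and $G$ acts on $X_i$, not on the tree. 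So I would either invoke an earlier lemma producing a $G$-action on a locally finite tree, or argue coarsely with quasi-horoballs, using cocompactness of $G$ on $X_i$ together with local finiteness.
\end{itemize}

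\textbf{Step 3: assemble the structure.} After Steps 1 and 2, each action of $H$ on $X_i$ is elliptic, lineal, or focal with vanishing ping-pong. In each case there is a homomorphism $\rho_i\colon H'\to\R$ on a finite-index subgroup $H'$ of $H$ (passing to index $2$ in the lineal case to fix both endpoints): $\rho_i$ is the translation length along the line or toward the fixed end, and $\rho_i=0$ in the elliptic case.
\begin{itemize}
\item Let $\rho=(\rho_1,\dots,\rho_l)\colon H'\to\R^l$. Its image is a finitely generated abelian torsion-free group, hence $\cong\Z^n$.
\item Let $K=\ker\rho$. Every finitely generated subgroup of $K$ acts on each $X_i$ with bounded orbits, by the horoball argument of Step 2 applied factorwise.
\item Hence such a subgroup has bounded orbits on $\prod X_i$. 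By properness of the diagonal action it is finite, so $K$ is locally finite.
\item This gives $H'$ locally finite-by-$\Z^n$, hence $H$ virtually so.
\end{itemize}

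One care point in Step 3: $K$ need not be finitely generated. So the bounded-orbit argument must be run on finitely generated subgroups of $K$, using that $\rho_i$ records exactly the drift along the fixed end.
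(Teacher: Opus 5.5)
Your argument is correct once the hedges are resolved, and it is organized differently from the paper's. The paper first reduces via Theorem~\ref{MainThm} and then works with $K=[H',H']$:
\begin{itemize}
\item Lemma~\ref{Lem: UniBdd} makes $K$ bounded on the lineal factors, and a separate claim shows $K$ is torsion.
\item For finitely generated $F\le K$, Lemma~\ref{Lem: NoHoro} handles the focal factors, and then Lemmas~\ref{Lem: NoElliptic} and \ref{Lem: AllQuaLine} make $F$ virtually abelian, hence finite.
\item Finally $H'/K$ is only finitely generated abelian, so one more finite-index passage is needed to reach $\Z^n$.
\end{itemize}
You instead take $K=\ker\rho$, where $\rho$ collects the Busemann homomorphisms of all non-elliptic factors, lineal ones included. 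By Lemma~\ref{Lem: ProperQLImpHomo}, this kernel is exactly the set of elements of $H'$ that are loxodromic on no factor. So any finitely generated $F\le K$ is elliptic on every $X_i$ by Lemma~\ref{Lem: NoHoro}, hence finite by properness, and $H'/K\cong\rho(H')\cong\Z^n$ directly. This bypasses the torsion claim and Lemmas~\ref{Lem: UniBdd}, \ref{Lem: AllQuaLine} and \ref{Lem: FinCom}. The essential inputs are then the no-horocyclic lemma and the fact that each $\rho_i$ is an honest homomorphism. The latter is Lemma~\ref{Lem: BusemannHomo}, which is where properness of $X_i$ and finite generation of $H'$ enter, so cite it rather than treating $\beta$ as a quasi-character.

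There are two corrections.

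\textbf{Step 1 cannot exclude focal actions.} Focal actions genuinely occur for (PPT) groups without $F_2$. The lamplighter group of Lemma~\ref{Lem: LamplighterGp} is amenable and not virtually abelian, so by Theorem~\ref{MainThm} its actions on $T_1$ and $T_2$ are focal. Drop the ``upgrade to $F_2$'' and keep only the Busemann route, which is what your Step 3 uses anyway.

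\textbf{Step 2 is exactly Lemma~\ref{Lem: NoHoro}.} The paper obtains it from Manning's coarse version of Serre's lemma \cite[Proposition 3.1]{Man06}, with no tree needed. If you do pass to a tree (e.g.\ via Theorem~\ref{Thm: QTToT} applied to the cocompact $G$-action on $X_i$, then restricting to $F$), fix one statement. An elliptic automorphism fixing an end $\xi$ fixes a ray towards $\xi$, not a horoball pointwise. Finitely many such rays share a common subray, and that gives the global fixed vertex.
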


\begin{remark}
    \begin{enumerate}
        \item If a locally finite group is finitely generated, then it is a finite group. In this case, Theorem \ref{IntroThm: TA PPT} is exactly the classical Tits Alternative for groups with property (PPT). Otherwise, a locally finite group is an infinitely generated torsion group. A typical example of a finitely generated (locally finite)-by-$\Z^n$ group is the wreath product $F\wr \Z^n$ where $F$ is a finite group. It is proved in \cite[Proposition 9.33]{Gen17} that the wreath product $F\wr \Z^n$ acts properly on a CAT(0) cube complex of dimension $2n$.
        \item Several results regarding the Tits Alternative for groups acting properly on finite-dimensional CAT(0) cube complexes already exist; see \cite{SW05, CS11} for references. The distinction between our Theorem \ref{IntroThm: TA PPT} and these is that a quasi-tree is not generally CAT(0). Furthermore, even when each quasi-tree is a tree (and thus a CAT(0) cube complex), the proof presented in this paper relies primarily on hyperbolic geometry rather than CAT(0) geometry.
    \end{enumerate}
\end{remark}

\begin{corollary}
    Let $G$ be a finitely generated group with property (PPT). If $G$ does not contain an infinite locally finite subgroup, then any finitely generated subgroup of $G$ is either virtually abelian or contains a non-abelian free subgroup $F_2$.
\end{corollary}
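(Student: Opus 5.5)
The corollary should follow directly from Theorem~\ref{IntroThm: TA PPT}. Let $H\le G$ be finitely generated and suppose $H$ contains no copy of $F_2$. By Theorem~\ref{IntroThm: TA PPT}, $H$ has a finite-index subgroup $H_0$ with a short exact sequence
$$1\to L\to H_0\to \Z^n\to 1,$$
where $L$ is locally finite. Since $L\le G$ and $G$ has no infinite locally finite subgroups, $L$ is finite. So $H_0$ is finite-by-$\Z^n$, and it remains to show that a finite-by-$\Z^n$ group is virtually abelian (indeed virtually $\Z^n$).

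For this I would use the centralizer trick.

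\begin{enumerate}
\item $H_0$ acts on the finite group $L$ by conjugation. The kernel $C=C_{H_0}(L)$ of $H_0\to\mathrm{Aut}(L)$ has finite index in $H_0$.
\item The subgroup $C\cap L=Z(L)$ is central in $C$, and $C/(C\cap L)$ embeds in $\Z^n$ with finite index, so it is free abelian of some rank $m\le n$.
\item Hence $C$ is a central extension of the finite group $Z(L)$ by $\Z^m$. Lift a basis of $\Z^m$ to elements $c_1,\dots,c_m\in C$. Their commutators $[c_i,c_j]$ lie in $Z(L)$, which is central in $C$.
\item For central commutators, $[c_i^k,c_j]=[c_i,c_j]^k$. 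Taking $k=|Z(L)|$, the elements $c_1^k,\dots,c_m^k$ pairwise commute.
\item These powers generate an abelian subgroup $A$ that maps onto $k\Z^m$. Since $C$ is generated by the $c_i$ together with $Z(L)$, the index $[C:A]$ is at most $|Z(L)|\cdot k^m<\infty$.
\end{enumerate}

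Therefore $A$ has finite index in $H$, so $H$ is virtually abelian (even virtually $\Z^m$).

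No step here is a serious obstacle. The only point needing care is step~4, the passage from finite-by-$\Z^n$ to virtually abelian; the centralizer argument handles it cleanly.
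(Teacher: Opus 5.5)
Your argument is correct and is essentially the intended deduction from Theorem~\ref{IntroThm: TA PPT}. The hypothesis forces the locally finite normal subgroup to be finite, and your centralizer argument that a finite-by-$\Z^n$ group is virtually abelian is valid, including the identity $[c_i^k,c_j]=[c_i,c_j]^k$ for central commutators.

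The only difference from the paper is cosmetic. In the proof of Proposition~\ref{Prop: NoPara}, the locally finite normal subgroup is the commutator subgroup $K=[H',H']$ of a finite-index subgroup $H'\le H$, so once $K$ is finite the paper simply invokes Lemma~\ref{Lem: FinCom}. You instead work only from the statement of the theorem and prove the corresponding finite-by-$\Z^n$ step by hand.
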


In \cite{BBF21}, Bestvina-Bromberg-Fujiwara showed that any residually finite hyperbolic group has \textit{property (QT)}, i.e., it admits an isometric action on a finite $\ell^1$-product of quasi-trees such that the induced orbit map is a quasi-isometric embedding. It follows immediately that every residually finite hyperbolic group can act properly on a finite product of quasi-trees. However, such quasi-trees are usually not proper. We also remark that it is an open conjecture that every hyperbolic group is residually finite. 

One by-product of Proposition \ref{Prop: NoPara} is to obtain the following result, which is a generalization of \cite[Proposition 2.11]{HNY25}.

\begin{proposition}[Proposition \ref{Prop: AmeToEle}]\label{IntroProp: AmeToEle}
    Let $G$ be a finitely generated amenable group without infinite locally finite subgroups. Then $G$ has property (QT) if and only if it is virtually abelian.
\end{proposition}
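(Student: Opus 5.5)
The direction ``virtually abelian $\Rightarrow$ (QT)'' is standard. If $H\le G$ is a finite-index subgroup with $H\cong\Z^n$, then $\Z^n$ acts on the $\ell^1$-product of $n$ copies of $\R$ by translations in the coordinate directions. Each copy of $\R$ is a quasi-tree, and the orbit map is an isometric embedding of $(\Z^n,\ell^1)$. Property (QT) passes from a finite-index subgroup to the ambient group by inducing the action, as in \cite{BBF21}: one takes the product over cosets, or equivalently the induced action on a finite product of copies of each quasi-tree indexed by $G/H$. Hence $G$ has (QT).

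For the converse, suppose $G$ is finitely generated and amenable, has no infinite locally finite subgroup, and acts on $\prod_{i=1}^l X_i$, a product of quasi-trees, with quasi-isometrically embedded orbit map. In particular the action is metrically proper. I would first pass to a finite-index subgroup preserving each factor; this is harmless for the conclusion. The plan is to run the same argument as in Proposition \ref{Prop: NoPara}, whose key input is the classification of the action of $G$ on each $X_i$. For each $i$, Gromov's classification gives that $G\curvearrowright X_i$ is bounded, horocyclic, lineal, focal or general type.
\begin{itemize}
\item General type is excluded, since it produces a free subgroup by ping-pong, contradicting amenability.
\item The horocyclic (parabolic) case is the one Proposition \ref{Prop: NoPara} rules out for quasi-trees. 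A quasi-tree has no ``horoball-like'' parabolic behaviour compatible with properness of the product action.
\item The focal case gives a Busemann quasi-character $\beta_i\colon G\to\R$.
\end{itemize}
What remains is that each factor is bounded, lineal or focal. One then gets a homomorphism $\Phi=(\beta_1,\dots,\beta_l)\colon G\to\R^l$, using Busemann quasi-characters (homogeneous, hence homomorphisms for amenable $G$), and the kernel of $\Phi$ acts on each $X_i$ with bounded orbits or elliptically on the boundary. Properness would then force $\ker\Phi$ to be locally finite: every finitely generated subgroup of $\ker\Phi$ has bounded orbits in each factor, hence in the product, hence is finite. Thus $G$ is (locally finite)-by-$\Z^n$, with $\Phi(G)$ free abelian because $G$ is finitely generated. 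By hypothesis the kernel is finite, so $G$ is finite-by-$\Z^n$, and therefore virtually $\Z^n$: a finite-by-$\Z^n$ group has a finite-index subgroup centralizing the finite kernel, and that subgroup is virtually abelian.

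The main obstacle is that the quasi-trees here are not proper and the action is not cocompact, whereas Proposition \ref{Prop: NoPara} may use properness of the $X_i$. I would therefore check which steps use only metric properness of the diagonal action. The crucial step is excluding the horocyclic case and showing that kernel elements have bounded orbits. This is where the QI-embedding hypothesis, which is stronger than properness, should substitute for local compactness. An element acting parabolically, or with zero translation length in every factor, must have sublinear orbit growth in the product; the QI embedding forces it to have finite order, and this should let the ``bounded orbits $\Rightarrow$ finite'' argument go through.
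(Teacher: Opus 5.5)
\textbf{Verdict: the proposal has a genuine gap at the step you identify as crucial, though it is easy to close.}

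\textbf{What works.} Your overall architecture is sound. You pass to a finite-index subgroup preserving the factors; note that this is Button's theorem \cite[Theorem 6.1]{But25}, which the paper invokes, and it should be cited rather than asserted. You then make the lineal factors orientable and take $\Phi=(\beta_1,\dots,\beta_l)$ with Busemann homomorphisms, using amenability. Working with $\ker\Phi$ is slightly more direct than the paper's route. The paper reruns Proposition~\ref{Prop: NoPara} with $K=[H',H']$ and Lemma~\ref{Lem: UniBdd}, after securing coboundedness via Manning's remark.

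\textbf{The gap.} Your justification for ``every finitely generated subgroup of $\ker\Phi$ has bounded orbits in each factor'' is incorrect.
\begin{itemize}
\item A quasi-isometrically embedded orbit map does not force an element with zero translation length in every factor to have finite order. It only gives $|g^n|_S=o(n)$, i.e.\ $g$ is distorted. Finitely generated amenable groups have distorted elements of infinite order, e.g.\ $a\in BS(1,2)$ with $|a^n|_S\asymp\log n$.
\item Even if every element of $\ker\Phi$ had finite order, that would only make $\ker\Phi$ torsion. Local finiteness needs bounded orbits for finitely generated subgroups, which is a statement about groups, not single elements.
\item $\ker\Phi$ itself can act horocyclically, e.g.\ $\bigoplus_{\Z}\Z_2$ on $T_1$ in Lemma~\ref{Lem: LamplighterGp}. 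So ``the kernel of $\Phi$ acts with bounded orbits'' is false, and the finite generation of the subgroup is essential.
\item Your remark that quasi-trees lack parabolic behaviour ``compatible with properness of the product action'' mislocates the reason; properness plays no role there.
\end{itemize}

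\textbf{The missing input.} It is Manning's result, recorded in the paper as Lemma~\ref{Lem: NoHoro}: a finitely generated group cannot act horocyclically on a quasi-tree. This holds whether or not the quasi-tree is proper, and it uses neither properness of the product action nor the QI-embedding. With it your argument closes:
\begin{enumerate}
\item Let $F\le\ker\Phi$ be finitely generated. By Lemma~\ref{busemann homo q.m.} (applied to $G$), no element of $F$ is loxodromic on any $X_i$.
\item Hence each $F\curvearrowright X_i$ is elliptic or horocyclic, and therefore elliptic by Lemma~\ref{Lem: NoHoro}.
\item So $F$ has a bounded orbit in $\prod X_i$, and properness makes $F$ finite.
\end{enumerate}

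\textbf{Your ``main obstacle'' is not there.} As the paper notes, properness of the quasi-trees is used only in Lemmas~\ref{Lem: BusemannHomo}, \ref{Lem: UniBdd} and \ref{Lem: Amenable}, which hold automatically for amenable groups. Lemma~\ref{Lem: NoHoro} never required it. With this fix your route also needs no coboundedness, so it avoids Manning's Remark 3.2 altogether.
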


\subsection{From proper quasi-trees to regular trees}

A regular tree is a locally finite tree in which every vertex has the same degree. At the end of \cite{But19}, Button raised the following question:\footnote{Towards the completion of this paper, we became aware that a recent preprint by Manning–Ruffoni \cite{MR26} may provide a counterexample to Question \ref{IntroQue2}.}
\begin{question}\label{IntroQue2}
    Can a non-elementary hyperbolic group act properly on a finite product of regular trees?
\end{question}

One way to approach the above problem is via the following equivalent characterizations of property (PPT).
\begin{theorem}[Theorem \ref{MainThm0}]\label{IntroThm1}
    Let $G$ be a non-elementary hyperbolic group. The following are equivalent:
     \begin{enumerate}
         \item $G$ has property (PPT).
         \item $G$ admits a proper \textbf{diagonal} action on a finite $\ell^1$-product of regular trees.
         \item There exist regular trees $T_1,\ldots,T_l$ on which $G$ acts cocompactly with at least two independent loxodromic elements such that the diagonal action of $G$ on the $\ell^1$-product $\prod_{i=1}^lT_i$ is proper.
     \end{enumerate}
\end{theorem}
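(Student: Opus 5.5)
The plan is to prove the cycle (3)$\Rightarrow$(2)$\Rightarrow$(1)$\Rightarrow$(3).

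\emph{(3)$\Rightarrow$(2).} This is immediate, since the action in (3) is already a proper diagonal action on a finite $\ell^1$-product of regular trees.

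\emph{(2)$\Rightarrow$(1).} Suppose $G$ acts properly and diagonally on $\prod_{i=1}^l T_i$. The trees need not be acted on cocompactly, so first pass to a minimal invariant subtree of each $T_i$. A finitely generated group acting on a tree with a loxodromic element has a unique minimal invariant subtree $T_i'$, namely the union of the axes. If the action on $T_i$ has no loxodromic element, then either $G$ fixes a point (as $G$ is finitely generated) or $G$ fixes an end. In the first case we drop $T_i$: this keeps properness, because a bounded orbit contributes only a bounded amount to the $\ell^1$-distance. In the second case we need an argument that fixing an end is incompatible with non-elementary $G$ sitting properly in the product. My first attempt would use the horofunction at the fixed end, which gives a homomorphism $G\to\Z$. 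Its kernel acts elliptically on each finitely generated piece, so such factors contribute only ``horoheight''. This factor then only controls a quasi-morphism, and it can be discarded or replaced, e.g.\ by the Bass--Serre tree of $\ker$ extensions. This is delicate, and I might instead treat it via Proposition~\ref{Prop: NoPara}-style arguments.

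Next, the minimal subtree $T_i'$ of a finitely generated group is cocompact. It is a locally finite tree, hence a proper quasi-tree, and properness passes to subspaces. This yields (PPT).

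\emph{(1)$\Rightarrow$(3).} This is the main step. Given proper cocompact actions on proper quasi-trees $X_i$, we want regular trees. First, for each $i$ the action $G\curvearrowright X_i$ is cocompact on an unbounded proper hyperbolic space. Hence it is either of general type or elementary, and elementary cocompact actions on proper spaces would make $X_i$ quasi-line-like (lineal or focal). For a non-elementary hyperbolic $G$, I would split into two cases. A lineal factor comes from a quasi-homomorphism to $\R$ and can be traded for a line, or absorbed. A factor whose action is of general type has at least two independent loxodromics.

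Next, since $X_i$ is a proper quasi-tree with a cocompact group action, a Manning bottleneck argument gives a locally finite tree $T_i$ quasi-isometric to $X_i$. The point is that the isometry group of $X_i$ is locally compact and acts cocompactly, so we can use the result that a compactly generated totally disconnected locally compact group quasi-isometric to a tree acts properly and cocompactly on a locally finite tree. Here one must handle the identity component of $\Isom(X_i)$, which is compact modulo a possible $\R$ factor and cannot be present for a quasi-tree of general type. Composing, $G$ acts cocompactly on $T_i$ through a $G$-equivariant quasi-isometry, so the diagonal action remains proper.

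Finally, regularize: a cocompact locally finite tree without leaves can be made regular. Apply Bass--Kulkarni or Leighton-style arguments to get a finite-index subgroup acting on a regular tree, and then induce back to $G$ via a finite product of copies, which is still a product of regular trees and still proper. The expected obstacle is this step, namely passing from proper quasi-trees to locally finite trees with a $G$-action and ensuring that independent loxodromics survive. For the loxodromics, use the fact that the quasi-isometry is equivariant, so the limit sets are preserved.
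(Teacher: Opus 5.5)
Your cycle breaks at (1)$\Rightarrow$(3), and the missing idea is the one the paper's proof rests on.

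\textbf{Discarding non-general-type factors.} Condition (3) demands a general type action on \emph{every} factor. So each factor on which $G$ acts lineally or focally must be thrown away, and you must show the diagonal action on the surviving factors is still proper. Your plan never does this. Lineal factors are ``traded for a line, or absorbed'', but (3) admits no lineal factor. Focal factors are not handled at all. They are not quasi-line-like, and they do occur: via $F_2\twoheadrightarrow BS(1,2)$, $F_2$ acts cocompactly and focally on the $3$-regular Bass--Serre tree of $BS(1,2)$, and this factor can be added to the Cayley tree of $F_2$ without losing properness.

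The paper's argument runs as follows.
Suppose the kept factors form a locally finite graph $X$ (a product of locally finite trees) and $G\curvearrowright X$ is not proper. Then vertex stabilizers are infinite (Lemma~\ref{Lem: InfStab}). Since a hyperbolic group has no infinite torsion subgroup, $\stab_G(x)$ contains an element $g$ of infinite order. For $h$ independent of $g$, local finiteness forces some $hg^nh^{-1}$ to fix $x$ as well, so $F_2\le\stab_G(x)$ (Lemma~\ref{Lem: StabF2}). Because it fixes $x$, this $F_2$ would act properly on the discarded factors (Lemma~\ref{Lem: NoElliptic}), which is impossible. On quasi-lines it would have to be virtually abelian (Lemma~\ref{Lem: AllQuaLine}, giving Lemma~\ref{Lem: NoLine}). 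On a factor with cocompact isometry group, where it virtually fixes a boundary point, it would have to be amenable (Lemma~\ref{Lem: Amenable}, giving Lemma~\ref{Lem: NoHorocyclic}).

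\textbf{Regularization and the other directions.} Your regularization step also fails as stated. Bass--Kulkarni and Leighton concern uniform lattices and common finite covers; they do not turn a non-regular locally finite tree into a regular one. Inducing from a finite-index $H\le G$ to $\prod_{G/H}T$ gives an action in which $G$ permutes the factors. That action is not diagonal, whereas (2) and (3) are about diagonal actions.

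The paper needs no such step.
\begin{itemize}
\item For (1)$\Rightarrow$(2), it converts the $X_i$ into bounded-valence trees and lines by Button's Theorem~\ref{Thm: QTToT}; your route via $\Isom(X_i)$ and Kr\"on--M\"oller is a heavier substitute for this. It then discards the lines (Lemma~\ref{Lem: NoLine}) and invokes Button's Lemma~6.1 to pass to regular trees; (2) does not demand cocompactness.
\item For (2)$\Rightarrow$(3), it discards the non-general-type factors by Lemma~\ref{Lem: NoHorocyclic}, where regularity supplies the cocompact isometry group. It then restores cocompactness by passing to the minimal subtrees $C(T_i)$.
\end{itemize}
Regularity of these cores is not automatic. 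For example, $\Z/3*\Z/4$ acts properly on a $4$-regular tree obtained by equivariantly grafting rooted branches onto its $(3,4)$-biregular Bass--Serre tree, and the core of that action is the biregular tree.

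Finally, the ``fixed end'' case you worry about in (2)$\Rightarrow$(1) is vacuous. A finitely generated group acting on a tree without loxodromic elements has a global fixed point (Serre; cf.\ Lemma~\ref{Lem: NoHoro}). So that direction is fine once you drop elliptic factors and pass to minimal subtrees, and (3)$\Rightarrow$(2) is trivial.
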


By \cite[Theorem 6.1]{But25}, up to taking a finite-index subgroup, an isometric action on a finite product of connected graphs can be chosen to be diagonal.

\paragraph{\textbf{Structure of the paper}} 
The paper is organized as follows. In Section \ref{sec: Pre}, we recall some basic materials about Gromov-hyperbolic spaces, amenable groups and Busemann quasimorphisms. In Section \ref{sec: TA}, we first characterize groups with property (PPH) on quasi-lines, and then we briefly introduce some knowledge about regular focal actions developed in \cite{CCMT15}, which we employ to prove Theorem \ref{IntroThm: TA} and \ref{IntroThm: TA PPT}. As a by-product, Proposition \ref{IntroProp: AmeToEle} is derived at the end of this section. Finally, in Section \ref{sec: PPT}, we combine some relevant results from Button \cite{But19, But23} to establish Theorem \ref{IntroThm1}.

\subsection*{Acknowledgments} 
We are grateful to Prof. Wenyuan Yang for many helpful suggestions on the first draft. R. W. is supported by NSFC No.12471065 \& 12326601 and in part by Science and Technology Commission of Shanghai Municipality (No. 22DZ2229014).

\section{Preliminaries}\label{sec: Pre}


\subsection{Gromov-hyperbolic spaces}\label{Subsec: HypSpace}

A metric space $X$ is \textit{proper}, if any bounded closed subset of $X$ is compact. An isometric group action $G\curvearrowright X$ on a metric space is \textit{(metrically) proper} if for any $x\in X$ and any $r>0$, the set $\{g\in G: d(x,gx)\le r\}$ is finite. A group action $G\curvearrowright X$ on a topological space is \textit{cocompact} if $X/G$ is compact; equivalently, there is a compact subset $U\subset X$ such that $\bigcup_{g\in G}g(U)=X$. A metric space $X$ is \textit{geodesic} if any two points in $X$ can be connected by a geodesic. For any two points $x,y$ in a geodesic metric space, we always denote by $[x,y]$ a choice of geodesic from $x$ to $y$. 

Let $X$ be a metric space. For a subset $Z\subset X$ and a constant $C>0$, we denote by $N_C(Z)$ the open $C$-neighborhood of $Z$ in $X$. 
For two metric spaces $X$ and $Y$, a map $\phi: X\to Y$ is  a \textit{quasi-isometric embedding} if there exist two constants $K\ge 1,\varepsilon\ge 0$ such that
$$\frac{1}{K}d_X(x_1,x_2)-\varepsilon\le d_Y(\phi(x_1),\phi(x_2))\le Kd_X(x_1,x_2)+\varepsilon$$
for any $x_1,x_2\in X$. Moreover, if $Y$ is contained in a bounded neighborhood of $\phi(X)$, then $\phi$ is a \textit{quasi-isometry} and $X, Y$ are \textit{quasi-isometric}.

A \textit{path} in a metric space $X$ is the image of a continuous map $\alpha: I\to X$ from an (possibly bounded or unbounded) interval $I$ to $X$. If $\alpha$ is a quasi-isometric embedding, then the path is a \textit{quasi-geodesic}. Moreover, if $I=[0,+\infty)$, then the path is a \textit{quasi-geodesic ray}. 
A metric space $X$ is \textit{quasi-geodesic} if any two points in $X$ can be connected by a quasi-geodesic.

Let $X$ be a geodesic metric space. A geodesic triangle in $X$ is called \textit{$\delta$-thin} for $\delta\geq 0$ if any side of the geodesic triangle is contained in the $\delta$-neighborhood of the union of the other two sides. 
If every geodesic triangle in $X$ is $\delta$-thin, we say that $X$ is \textit{$\delta$-hyperbolic}. A \textit{Gromov-hyperbolic space} is a geodesic metric space which is $\delta$-hyperbolic for some $\delta\geq0$. It is well-known that Gromov-hyperbolicity is a quasi-isometry invariant for geodesic metric spaces. A finitely generated group $G$ is a \textit{hyperbolic group} if the Cayley graph of $G$ with respect to some (or equivalently, any) finite generating set is Gromov-hyperbolic.

Now we recall the definition of the Gromov boundary of a proper Gromov-hyperbolic space. Let $X$ be a proper Gromov-hyperbolic space. The \textit{Hausdorff distance} of any two  subsets $U,V$ in $X$ is defined to be the infimum of the number $R$ such that $V\subseteq N_R(U)$ and $U\subseteq N_R(V)$. Two quasi-geodesic rays $\alpha$ and $\beta$ in $X$ are called \textit{asymptotic} if they are at finite Hausdorff distance. Clearly, being asymptotic is an equivalence relation on the set of quasi-geodesic rays in $X$.
\begin{defn}
    The \textit{(Gromov) boundary} of $X$ is the collection of equivalence classes of quasi-geodesic rays. It is usually denoted by $\partial X$.
\end{defn}
When a quasi-geodesic ray $\alpha$ represents an equivalence class $\xi\in\partial X$, the ray $\alpha$ is said to be \textit{asymptotic} to $\xi$ and we denote $\alpha(\infty)=\xi$.

\begin{remark}
    (1) There is a so-called \textit{shadow topology} $\mathcal{T}_{x,k}$ on $\overline{X}\coloneqq X\cup\partial X$  such that $\left(\overline{X},\mathcal{T}_{x,k}\right)$ forms a compactification of $X$. See \cite[\textsection 3.11, 11.11]{DK18} as a reference.
    
    (2) We remark that the Gromov boundary of a general (not necessarily proper) Gromov-hyperbolic space can be defined in a similar way. One difference is that the Gromov boundary of a non-proper Gromov-hyperbolic space may not be a compact space under the shadow topology.
\end{remark}

Suppose $X$ is a proper Gromov-hyperbolic space and $G$ is a group acting isometrically on $X$.  Fix a basepoint $x_0\in X$. We say a sequence $\{x_n\}_{n\in \N}\subset X$ \textit{converges} to a boundary point $\xi\in \partial X$ if the sequence of geodesic segments $[x_0,x_n]$ has a limit geodesic ray which is asymptotic to $\xi$. Then the \textit{limit set} $\Lambda(G)\subset\partial X$ of the action is defined by $$\Lambda(G)=\{\xi\in \partial X: \text{there exists } \{g_i\}_{i\in \N}\subset G \text{ such that } \{g_ix_0\}_{i\in \N} \text{ converges to } \xi\}.$$
It is well-known that the cardinality of $\Lambda(G)$ is $0,1,2$ or $\infty$.

    Suppose $G$ acts isometrically on a Gromov-hyperbolic space $X$. Let $g\in G$. If the orbit of $\langle g\rangle$ in $X$ is bounded, then $g$ is called \textit{elliptic}. Otherwise $g$ is called \textit{loxodromic} (resp. \textit{parabolic}) if $g$ has exactly two (resp. one) fixed points in $\partial X$.

    Suppose $g\in G$ is a loxodromic element of a Gromov-hyperbolic space $X$. Fix a basepoint $x_0\in X$. Denote $\alpha=\bigcup_{i=0}^\infty g^i\left[x_0,gx_0\right]$ and $\beta=\bigcup_{i=0}^\infty g^{-i}\left[x_0,g^{-1}x_0\right]$. Then $\alpha,\beta$ are two quasi-geodesic rays in $X$.
    We define $g^{+} (\text{resp. } g^-) \in\partial X$ to be the equivalence class of $\alpha$ (resp. $\beta$) and call it the \textit{attracting} (resp. \textit{repelling}) point of $g$. Clearly, $\{g^{\pm}\}$ are exactly the two fixed points of $g$ in $\partial X$. Note that the above definitions do not depend on the choice of $x_0$. Two loxodromic elements $g_1, g_2\in G$ are called \textit{independent} if $\{g_1^{\pm}\}\cap \{g_2^{\pm}\}=\emptyset$.

According to Gromov \cite{Gro87}, all isometric actions on a Gromov-hyperbolic space can be divided into the following five types. 
\begin{theorem}\cite[Theorem 4.2]{ABO19}\label{Thm: Classification}
    Let $G$ be a group acting isometrically on a Gromov-hyperbolic space $X$. Then exactly one of the following conditions holds. 
    \begin{enumerate}
        \item $\abs{\Lambda(G)}=0$. Equivalently, $G$ has bounded orbits. In this case the action of $G$ is called {\rm elliptic}.
        \item $\abs{\Lambda(G)}=1$. Equivalently, $G$ has unbounded orbits and contains no loxodromic elements. In this case the action of $G$ is called {\rm parabolic} or {\rm horocyclic}. A parabolic action cannot be cobounded and the set of points of $\partial X$ fixed by $G$ coincides with $\Lambda(G)$.
        \item $\abs{\Lambda(G)}=2$. Equivalently, $G$ contains a loxodromic element and any two loxodromic elements have the same limit points on $\partial X$. In this case the action of $G$ is called {\rm lineal}.
        \item $\abs{\Lambda(G)}=\infty$. Then $G$ always contains loxodromic elements. In turn, this case breaks into two subcases.\begin{enumerate}
            \item $G$ fixes a point of $\partial X$. Equivalently, any two loxodromic elements of $G$ have a common limit point on the boundary. In this case the action of $G$ is called {\rm quasi-parabolic} or {\rm focal}. Orbits of quasi-parabolic actions are always quasi-convex.
            \item $G$ does not fix any point of $\partial X$. Equivalently, $G$ contains infinitely many independent loxodromic elements. In this case the action of $G$ is said to be {\rm of\ general\ type}.
        \end{enumerate}
    \end{enumerate}
\end{theorem}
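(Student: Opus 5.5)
The classification in Theorem~\ref{Thm: Classification} is organized by the cardinality of the limit set $\Lambda(G)$, which is $0,1,2$ or $\infty$. So the plan is to fix a basepoint $x_0$, treat each cardinality in turn, and prove the claimed equivalences in each case. Mutual exclusivity is then automatic, since the cases are indexed by $\abs{\Lambda(G)}$. The main tool will be the ping-pong/North–South dynamics of loxodromic elements, together with the standard fact that an unbounded orbit in a $\delta$-hyperbolic space accumulates on $\partial X$.

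\textbf{Case $\abs{\Lambda(G)}=0$.} If the orbit $Gx_0$ is bounded, no sequence $g_ix_0$ escapes to infinity, so $\Lambda(G)=\emptyset$. Conversely, if $Gx_0$ is unbounded, pick $g_i$ with $d(x_0,g_ix_0)\to\infty$. Following the definition of convergence via limits of $[x_0,g_ix_0]$, pass to a subsequence to obtain a limit ray; in the non-proper case, use the sequential Gromov-product characterization instead. This produces a boundary point in $\Lambda(G)$.

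\textbf{Case $\abs{\Lambda(G)}=1$.} First, if $g$ is loxodromic then $g^{\pm}\in\Lambda(G)$ are distinct, so no loxodromics means $\abs{\Lambda(G)}\le1$; with unbounded orbits this gives equality. Conversely, $\abs{\Lambda(G)}=1$ forbids loxodromics. The limit set is $G$-invariant, so its unique point is fixed. The non-coboundedness claim I will prove by contradiction. A cobounded action with unbounded orbits contains a loxodromic: take $g,h$ moving $x_0$ far in "different directions", which coboundedness allows, and apply the standard Gromov-product estimate showing that $g$ or $gh$ is loxodromic.

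\textbf{Case $\abs{\Lambda(G)}=2$.} Suppose there is a loxodromic $g$. Any other loxodromic $h$ with $\{h^\pm\}\ne\{g^\pm\}$ either shares exactly one point with $g$, in which case conjugates $g^nhg^{-n}$ give infinitely many fixed points, or is disjoint from $\{g^\pm\}$. In both situations $\Lambda(G)$ is infinite. Hence all loxodromics share endpoints exactly when $\Lambda(G)=\{g^\pm\}$. One also has to rule out a limit set of size $2$ with no loxodromics, and this is done by the previous paragraph.

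\textbf{Case $\abs{\Lambda(G)}=\infty$.} I expect this to be the main obstacle. The first claim is that loxodromics exist here. The approach is to take three distinct limit points and find $g,h$ whose orbits push $x_0$ toward different points. A Gromov-product inequality $(gx_0\mid h^{-1}x_0)_{x_0}$ small relative to $d(x_0,gx_0)$ then forces one of $g,h,gh$ to be loxodromic. The dichotomy then follows. If any two loxodromics share a limit point, I will show they share one common point fixed by all of $G$, using that three pairwise-intersecting endpoint pairs without a common point would yield independent elements via ping-pong. Otherwise two independent loxodromics exist, and ping-pong on high powers gives infinitely many independent ones, with no global fixed point. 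Quasi-convexity of orbits in the focal case I would cite rather than reprove (Caprace–Cornulier–Monod–Tessera). The overall result is due to Gromov and is recorded in \cite{ABO19}, so in the paper it suffices to cite it.
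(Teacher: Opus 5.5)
The paper gives no proof of this statement; it cites \cite{ABO19}, after Gromov. Your sketch therefore has to stand on its own, and it has a genuine gap at the heart of the horocyclic case.

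\textbf{The main gap.} In Case $\abs{\Lambda(G)}=1$ you argue that a loxodromic $g$ contributes two distinct points $g^\pm\in\Lambda(G)$, ``so no loxodromics means $\abs{\Lambda(G)}\le 1$''. That observation only gives $\abs{\Lambda(G)}\le 1\Rightarrow$ no loxodromics. You are asserting its converse, which is precisely the nontrivial content of the classification. You then invoke it again (``done by the previous paragraph'') to exclude $\abs{\Lambda(G)}=2$ without loxodromics. Your only actual construction of loxodromics, in the last case, starts from three limit points and does not say how $h^{-1}x_0$ is controlled.

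The missing lemma is the following. Since $d(x_0,g^2x_0)=2d(x_0,gx_0)-2(gx_0\mid g^{-1}x_0)_{x_0}$, and $d(x_0,g^2x_0)>d(x_0,gx_0)+C\delta$ forces $g$ to be loxodromic, every non-loxodromic $g$ satisfies
$(gx_0\mid g^{-1}x_0)_{x_0}\ge\tfrac12 d(x_0,gx_0)-O(\delta)$.
In words, $g^{-1}x_0$ points the same way as $gx_0$.

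Now take just two limit points $\xi\ne\eta$, and elements $g,h$ with $gx_0$ far towards $\xi$ and $hx_0$ far towards $\eta$. If neither $g$ nor $h$ is loxodromic, the lemma and the four-point condition bound both $(g^{-1}x_0\mid hx_0)_{x_0}$ and $(gx_0\mid h^{-1}x_0)_{x_0}$ by a constant depending only on $(\xi\mid\eta)_{x_0}$ and $\delta$. These are the Gromov products at the corners of the broken geodesic through $x_0,gx_0,ghx_0,ghgx_0,\ldots$. That path is therefore a local quasi-geodesic, so $gh$ is loxodromic.

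This proves $\abs{\Lambda(G)}\ge2\Rightarrow$ a loxodromic exists, which is what Cases (2), (3) and the first claim of (4) all need. It also justifies your unproved opening assertion $\abs{\Lambda(G)}\in\{0,1,2,\infty\}$: with a loxodromic $g$ in hand, any further limit point $\zeta\notin\{g^\pm\}$ yields infinitely many limit points $g^n\zeta$.

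\textbf{Case (1) for non-proper $X$.} The same lemma is what you need here. An unbounded sequence in a non-proper hyperbolic space need not have any subsequence converging at infinity. In the wedge of segments of lengths $1,2,3,\ldots$ at a point $o$, the endpoints satisfy $(x_n\mid x_m)_o=0$ for $n\ne m$. So ``use the sequential Gromov-product characterization'' does not by itself produce a limit point. For orbits it works because, absent loxodromics, the argument above gives $(gx_0\mid hx_0)_{x_0}\ge\tfrac12\min\{d(x_0,gx_0),d(x_0,hx_0)\}-O(\delta)$. Hence escaping orbit sequences are Gromov-Cauchy, and all of them converge to one point. If a loxodromic exists, $g^\pm\in\Lambda(G)$ anyway.

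\textbf{Two smaller omissions.}
\begin{enumerate}
\item In Case (3), ``hence $\Lambda(G)=\{g^\pm\}$'' needs an argument that no limit points other than $g^\pm$ occur. Each $kgk^{-1}$ is loxodromic with endpoints $kg^\pm$, so $G$ permutes $\{g^\pm\}$. Hence $G$ coarsely preserves a quasi-axis of $g$, and orbits stay in a bounded neighbourhood of it.
\item In Case (2) you do not show that $G$ fixes no boundary point besides its limit point. Fixing two boundary points with unbounded orbits would make $G$ act by coarse translations along a quasi-line, which produces a loxodromic.
\end{enumerate}
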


\subsection{Amenable groups and Busemann quasimorphisms}


Let $G$ be a group. A \textit{mean} on $G$ is a linear functional on $L^\infty(G)=\left\{\varphi\colon G\to \R: \sup_{g\in G}\abs{\varphi(g)}<\infty\right\}$ which maps the constant function $\varphi\equiv1$ to $1$, and maps non-negative functions to non-negative numbers.
\begin{defn}
    A group $G$ is \textit{amenable} if there is a $G$-invariant mean $\pi\colon L^\infty(G)\rightarrow\mathbb{R}$ where $G$ acts on $L^\infty(G)$ by
    $$g\cdot \varphi(h)=\varphi\left(g^{-1}h\right)$$
    for all $g,h\in G$ and $\varphi\in L^\infty(G)$.
\end{defn}

We collect some basic facts about amenable groups here. For their proofs and more details about amenable groups, we refer the readers to \cite[Section~18]{DK18}.
\begin{fact}
\label{subgroupamenable}
    \begin{enumerate}
        \item Every virtually solvable group is amenable.
        \item Each subgroup of an amenable group is amenable.
        \item Let $N$ be a normal subgroup of a group $G$. The group $G$ is amenable if and only if both $N$ and $G/N$ are amenable.
        \item The direct limit of a directed system $\left(H_i\right)_{i\in I}$ of amenable groups $H_i$ is amenable.
        \item An amenable group does not contain a non-abelian free subgroup.
        \item Amenability is a quasi-isometry invariant for finitely generated groups.
    \end{enumerate}
\end{fact}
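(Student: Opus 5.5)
These items are standard, so the plan is to derive each of them from the definition of an invariant mean, using two auxiliary characterizations of amenability. I would prove them in the order (2), (3), (4), (1), (5), (6), because the later items reduce to the earlier ones.

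For (2), let $H\le G$ and choose a set $R$ of right coset representatives, so $G=\bigsqcup_{r\in R}Hr$. Given $\varphi\in L^\infty(H)$, define $\tilde\varphi\in L^\infty(G)$ by $\tilde\varphi(hr)=\varphi(h)$. Left translation by $h\in H$ commutes with this extension, so $\varphi\mapsto\pi(\tilde\varphi)$ is an $H$-invariant mean on $H$.

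For (3), the direction "$G$ amenable $\Rightarrow$ $N$ amenable" is (2). For $G/N$, pull functions back along the quotient map and apply the mean on $G$. For the converse, let $\pi_N$ and $\pi_{G/N}$ be invariant means on $N$ and $G/N$. Given $\varphi\in L^\infty(G)$, set $\psi(gN)=\pi_N\bigl(n\mapsto\varphi(gn)\bigr)$. This is well defined by $N$-invariance of $\pi_N$, since $\varphi(gn'n)$ is a left translate in the variable $n$. Then $\varphi\mapsto\pi_{G/N}(\psi)$ is a $G$-invariant mean on $G$.

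For (4), the images of the $H_i$ give an increasing union of amenable subgroups. For each $i$, take a mean on the direct limit that is invariant under the image of $H_i$, obtained by the coset extension from (2). Then take a weak-$*$ cluster point of this net in the compact set of means (Banach–Alaoglu). The limit is invariant under every element, since each element lies in some image of $H_i$. The same two-step pattern settles the non-injective case, via (3) applied to the quotient maps.

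For (1), by (2) and (3) it suffices to treat finite groups and abelian groups, because a solvable group has a finite subnormal series with abelian factors. Finite groups are amenable via the averaging mean. An abelian group is a direct limit of finitely generated abelian groups, so by (4) and (3) it is enough to show that $\mathbb Z$ is amenable. For $\mathbb Z$, take a weak-$*$ limit of the averages over $[-n,n]$; this is a Følner argument.

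For (5), by (2) it suffices to show that $F_2=\langle a,b\rangle$ is not amenable. Let $W(x)$ denote the set of reduced words beginning with $x$. Then $F_2=\{1\}\sqcup W(a)\sqcup W(a^{-1})\sqcup W(b)\sqcup W(b^{-1})$, while $F_2=W(a)\sqcup aW(a^{-1})=W(b)\sqcup bW(b^{-1})$. Applying an invariant mean to indicator functions gives $1\ge 2+2-\pi(\mathbf 1_{\{1\}})$. Since $\pi$ vanishes on finite sets for an infinite group, this is a contradiction.

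The main obstacle is (6), quasi-isometry invariance, because invariant means do not transport directly along quasi-isometries. I would first prove that, for a finitely generated group, amenability is equivalent to the Følner condition: for every $\varepsilon>0$ there is a finite $F$ with $|\partial F|\le\varepsilon|F|$ in a Cayley graph. The direction "Følner $\Rightarrow$ amenable" is the weak-$*$ limit argument above. The converse is the hard step; I would follow the Namioka approach of passing from an invariant mean to almost invariant $\ell^1$ functions, then via a layer-cake decomposition to Følner sets. Once this equivalence is available, one checks that a quasi-isometry with uniformly bounded point preimages carries Følner sets to Følner sets, after thickening by a bounded neighbourhood and using bounded geometry of Cayley graphs. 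An equivalent option is to use Gromov's doubling criterion, that a group is non-amenable if and only if there is a bounded-displacement $2$-to-$1$ map, which is visibly a quasi-isometry invariant. For the whole list I would ultimately defer the details to \cite[Section~18]{DK18}, as the paper does.
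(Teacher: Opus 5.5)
Your outline is correct and matches what the paper does: the paper gives no argument here and simply cites \cite[Section~18]{DK18}, where these facts are proved by the standard arguments you sketch (transfer of means for subgroups, quotients and extensions; a weak-$*$ cluster point for direct limits; the paradoxical decomposition of $F_2$; F\o{}lner sets for quasi-isometry invariance). Three minor slips need fixing: in (5) the bookkeeping is $1=\pi(\mathbf{1}_{\{1\}})+1+1$, which already contradicts positivity, so you do not need $\pi$ to vanish on finite sets; in (4) the mean invariant under the image of $H_i$ comes from restricting functions to that image (then applying its invariant mean), not from the extension map of (2), which goes the other way; and in (1) you should first pass to the normal core of the finite-index solvable subgroup so that (3) applies.
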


\begin{defn}
    For a group $G$, a map $\varphi\colon G\rightarrow\mathbb{R}$ is a \textit{quasimorphism} if there exists $\Delta>0$ such that for any $g,h\in G$,
    $$\abs{\varphi(gh)-\varphi(g)-\varphi(h)}\leq \Delta.$$

    A quasimorphism is \textit{homogeneous} if it satisfies the additional property
    $$\varphi\left(g^n\right)=n\varphi(g)$$
    for all $g\in G$ and $n\in\mathbb{Z}$.
\end{defn}

Let $X$ be a Gromov-hyperbolic space and $\xi\in\partial X$. In \cite{CCMT15}, Caprace-Cornulier-Monod-Tessera defined a \textit{horokernel} based at $\xi$ to be any accumulation point (in the topology of pointwise convergence) of a sequence of functions
$$X\times X\rightarrow\mathbb{R},\ \ \ \ (x,y)\mapsto d\left(x,x_n\right)-d\left(y,x_n\right),$$
where $\{x_n\}$ is any sequence in $X$ converging to $\xi$. By the Tychonoff theorem, the collection $\mathcal{H}_\xi$ of all horokernels based at $\xi$ is nonempty.

\begin{proposition}\cite[Proposition~3.7]{CCMT15}\label{Prop: BusemannQM}
    Let $G$ be a finitely generated group acting isometrically on $X$. Let $\xi\in\partial X$, $h\in\mathcal{H}_\xi$ and $x\in X$. Then the function
$$\beta_\xi\colon \stab_G(\xi)\rightarrow\mathbb{R},\ \ \ \ \beta_\xi(g)=\lim_{n\rightarrow\infty}\frac{1}{n}h\left(x,g^nx\right),$$
is a well-defined homogeneous quasimorphism\footnote{In \cite{CCMT15}, Caprace-Cornulier-Monod-Tessera uses \textit{quasi-characters} to stand for quasimorphisms and \textit{characters} to stand for homomorphisms. For the sake of consistency, we unify these terminologies.}, called {\rm Busemann quasimorphism} of $\stab_G(\xi)$, and is independent of $h\in\mathcal{H}_\xi$ and of $x\in X$. 
\end{proposition}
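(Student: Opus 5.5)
Since Proposition~\ref{Prop: BusemannQM} is quoted from \cite{CCMT15}, the plan is to re-derive its three claims from hyperbolic geometry: the limit exists, it is independent of $h$ and $x$, and it defines a homogeneous quasimorphism on $\stab_G(\xi)$.

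The first step is to record the basic properties of a horokernel $h\in\mathcal{H}_\xi$. Since $h$ is a pointwise limit of the functions $(x,y)\mapsto d(x,x_n)-d(y,x_n)$, it satisfies
\begin{itemize}
\item antisymmetry, $h(x,y)=-h(y,x)$;
\item the bound $|h(x,y)|\le d(x,y)$;
\item the cocycle identity $h(x,z)=h(x,y)+h(y,z)$.
\end{itemize}
The key hyperbolic input concerns any two horokernels $h,h'$ based at the same point $\xi$. I would show that $|h(x,y)-h'(x,y)|\le C(\delta)$ for a uniform constant $C(\delta)$. The argument uses thin triangles. For sequences $x_n\to\xi$ and $x'_m\to\xi$, the geodesics from $x$ and from $y$ toward $\xi$ eventually fellow travel, so both differences $d(x,x_n)-d(y,x_n)$ and $d(x,x'_m)-d(y,x'_m)$ agree up to $O(\delta)$ with a Gromov-product expression.

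Next, for $g\in\stab_G(\xi)$ and $h\in\mathcal{H}_\xi$, the translate $g\cdot h(x,y):=h(g^{-1}x,g^{-1}y)$ is again a horokernel at $\xi$, because $g^{-1}x_n\to g^{-1}\xi=\xi$. Hence $|h(gx,gy)-h(x,y)|\le C$. I then set $\phi(g)=h(x,gx)$ and apply the cocycle identity:
$$\phi(gk)=h(x,gx)+h(gx,gkx)=\phi(g)+\phi(k)+O(C).$$
So $\phi$ is a quasimorphism on $\stab_G(\xi)$. By Fekete's lemma applied to the subadditive and superadditive sequences $\phi(g^n)\pm nC$, the limit $\beta_\xi(g)=\lim_n\frac1n\phi(g^n)$ exists, and it is a homogeneous quasimorphism by the standard homogenization argument.

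Finally I check the independence claims. Changing $x$ to $x'$ changes $h(x,g^nx)$ by at most $2d(x,x')$, using the cocycle identity and the bound $|h(x,y)|\le d(x,y)$; this error vanishes after dividing by $n$. Changing $h$ to $h'$ changes the value by at most $C$, which likewise disappears in the limit.

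The main obstacle is the uniform comparison of two horokernels at the same boundary point, i.e.\ the bound $|h-h'|\le C(\delta)$. I would handle it by first reducing to sequences $x_n$ lying on a single quasi-geodesic ray to $\xi$, and then using that the Busemann functions defined along any two rays asymptotic to $\xi$ differ by a bounded amount. This step does not use finite generation of $G$.
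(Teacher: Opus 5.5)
Your proposal is correct and is essentially the standard argument of \cite{CCMT15}, which the paper quotes without proof: $\stab_G(\xi)$ permutes $\mathcal{H}_\xi$ and any two horokernels at $\xi$ are uniformly $O(\delta)$-close, so $g\mapsto h(x,gx)$ is a quasimorphism on $\stab_G(\xi)$ whose homogenization is $\beta_\xi$, and the independence claims follow from the exact cocycle identity together with $|h(x,y)|\le d(x,y)$. The comparison bound must be uniform over all of $\mathcal{H}_\xi$, because the translates $h(g\cdot,g\cdot)$ vary with $g$; your Gromov-product estimate gives this directly, so the detour through rays is unnecessary, and if you take it you need that Busemann functions of asymptotic rays agree up to an additive constant plus $O(\delta)$, not merely that their difference is bounded.
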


\begin{lemma}\cite[Lemma~3.8]{CCMT15}\label{busemann homo q.m.}
    Let $G$ be a finitely generated group acting isometrically on a Gromov-hyperbolic space $X$ with a global fixed point $\xi\in \partial X$. Denote by $\beta_\xi$ the corresponding Busemann homogeneous quasimorphism. Then $\beta_\xi(g)\ne0$ if and only if $g$ is a loxodromic isometry on $X$.
\end{lemma}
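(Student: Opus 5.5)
The plan is to compare $\beta_\xi(g)$ with the stable translation length
$$\tau(g)=\lim_{n\to\infty}\frac{1}{n}d\left(x,g^nx\right),$$
which exists by subadditivity and does not depend on $x$. Two facts drive the argument. First, every horokernel is $1$-Lipschitz in the sense that $\abs{h(x,y)}\le d(x,y)$. This holds because each approximating function $d(x,x_n)-d(y,x_n)$ satisfies it by the triangle inequality, and the bound passes to pointwise limits. Consequently
$$\abs{\beta_\xi(g)}\le \tau(g)$$
for every $g$.

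\emph{Step 1: non-loxodromic elements have $\beta_\xi(g)=0$.} By Theorem~\ref{Thm: Classification} applied to $\langle g\rangle$, a non-loxodromic $g$ is either elliptic or parabolic.
\begin{enumerate}
\item If $g$ is elliptic, the orbit $\{g^nx\}$ is bounded, so $\tau(g)=0$.
\item If $g$ is parabolic, I will use the standard fact that an isometry of a $\delta$-hyperbolic space with $\tau(g)>0$ is loxodromic. This follows because the orbit map $n\mapsto g^nx$ is then a quasi-geodesic, which has two distinct endpoints fixed by $g$. Hence a parabolic $g$ also has $\tau(g)=0$.
\end{enumerate}
In both cases the Lipschitz bound gives $\beta_\xi(g)=0$.

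\emph{Step 2: loxodromic elements have $\beta_\xi(g)=\pm\tau(g)\neq 0$.} Let $g$ be loxodromic. Since $G$ fixes $\xi$, so does $g$, and the only fixed points of $g$ in $\partial X$ are $g^\pm$. Hence $\xi\in\{g^+,g^-\}$. Replacing $g$ by $g^{-1}$ if necessary (homogeneity gives $\beta_\xi(g^{-1})=-\beta_\xi(g)$), assume $\xi=g^+$.

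Since $\beta_\xi$ is independent of $x$ by Proposition~\ref{Prop: BusemannQM}, choose $x=x_0$ on the quasi-axis $\alpha\cup\beta$ of $g$ built above. Write
$$d(x,x_n)-d\left(g^kx,x_n\right)=d\left(x,g^kx\right)-2\left(x\cdot x_n\right)_{g^kx},$$
where $(\cdot)_z$ denotes the Gromov product based at $z$. The goal is a constant $C$, independent of $k\ge 0$ and of all sufficiently large $n$, such that
$$\left(x\cdot x_n\right)_{g^kx}\le C.$$
Granting this, pass to the limit along the subsequence defining $h$ to get $h\left(x,g^kx\right)\ge d\left(x,g^kx\right)-2C$. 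Dividing by $k$ and combining with the Lipschitz bound yields $\beta_\xi(g)=\tau(g)$. Finally, $\tau(g)>0$ for loxodromic $g$: otherwise the orbit would not be a quasi-geodesic, contradicting the choice of quasi-axis.

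\emph{Main obstacle: the uniform Gromov-product bound.} I expect this to be the key step. The intended argument uses the Morse lemma, which says the quasi-geodesic $\alpha$ lies within uniform distance $D$ of a geodesic ray toward $g^+$.
\begin{enumerate}
\item Since $x_n\to g^+$, for $n$ large the geodesic $[x,x_n]$ fellow-travels $\alpha$ up to the point $g^kx$, with constant depending only on $\delta$ and the quasi-geodesic constants.
\item Hence $g^kx$ lies within bounded distance of $[x,x_n]$.
\item In a $\delta$-hyperbolic space, $(x\cdot x_n)_{g^kx}$ is, up to $O(\delta)$, the distance from $g^kx$ to $[x,x_n]$, which gives the bound.
\end{enumerate}
The care needed is in making "$n$ large" uniform in $k$. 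I would handle this by fixing $k$ first: the horokernel is a pointwise limit, so only $n\to\infty$ with $k$ fixed is required. The fellow-traveling constant itself does not depend on $k$, so $C$ is uniform in $k$ as needed.
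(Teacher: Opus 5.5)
Your proposal is correct. The paper gives no proof of this lemma; it only cites \cite[Lemma~3.8]{CCMT15}. Your argument is the standard direct proof, built from three ingredients:
\begin{itemize}
\item the $1$-Lipschitz bound $\abs{h(x,y)}\le d(x,y)$, which gives $\abs{\beta_\xi(g)}\le\tau(g)$;
\item the vanishing of $\tau$ for elliptic and parabolic elements (using $\tau(g)=\inf_n d(x,g^nx)/n$, so that $\tau(g)>0$ makes $n\mapsto g^nx$ a quasi-isometric embedding);
\item the eventual lower bound $h(x,g^kx)\ge d(x,g^kx)-2C$ when $\xi=g^+$.
\end{itemize}
It in fact proves the sharper identity $\beta_\xi(g)=\pm\tau(g)$, with the sign recording whether $\xi$ is the attracting or the repelling point of $g$.

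Two small points need tidying.

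\textbf{Non-proper spaces.} $X$ is not assumed proper here, and the paper does rely on the lemma for possibly non-proper quasi-trees (via Proposition~\ref{Prop: AmeToEle}). So a geodesic ray from $x$ to $g^+$ need not exist, and your appeal to the Morse lemma for a ray should be replaced as follows.
\begin{itemize}
\item Apply the Morse lemma to the finite segments $[x,g^mx]$.
\item Use the Gromov-product meaning of $x_n\to g^+$, namely $(x_n\cdot g^mx)_x\to\infty$ as $n,m\to\infty$.
\item Together these give $d(g^kx,[x,x_n])\le C$ for all $n\ge N(k)$, with $C$ depending only on $\delta$ and the quasi-geodesic constants of $\alpha$.
\item The inequality you then need, $(x\cdot x_n)_z\le d(z,[x,x_n])$, holds in any geodesic space by the triangle inequality alone.
\end{itemize}

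\textbf{Passage to the horokernel.} Since $h$ is only an accumulation point in the pointwise topology (possibly along a subnet), phrase this step as follows: $h(x,g^kx)$ is an accumulation value of $d(x,x_n)-d(g^kx,x_n)$. Hence a lower bound holding for all large $n$ with $k$ fixed passes to $h(x,g^kx)$, which is exactly what your ``fix $k$ first'' remark needs.
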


In general, a quasimorphism can be highly deviant from a homomorphism. However, there exist two scenarios where the Busemann quasimorphism reduces to a homomorphism.

\begin{lemma}\cite[Corollary 3.9]{CCMT15}\label{Lem: BusemannHomo}
    Let $G$ be a finitely generated group acting isometrically on a Gromov-hyperbolic space $X$ and fixing the boundary point $\xi\in \partial X$. Assume that $G$ is amenable, or that $X$ is proper. Then the Busemann quasimorphism $\beta_{\xi}: G\to \R$ is a group homomorphism (then called the \rm{Busemann homomorphism}).
\end{lemma}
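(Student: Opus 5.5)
The strategy is to show that the defect of $\beta_\xi$ vanishes on every pair $g,h\in G$, treating the two hypotheses separately. Throughout I use Proposition~\ref{Prop: BusemannQM}: $\beta_\xi$ is a homogeneous quasimorphism on $G=\stab_G(\xi)$ with some defect $\Delta$. It does not depend on the choice of $h\in\mathcal H_\xi$ or of $x$.

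\textbf{Amenable case.} A homogeneous quasimorphism on an amenable group is a homomorphism; this is standard. Fix $g,h$ and set $\psi(k)=\beta_\xi(gk)-\beta_\xi(k)$. This function is bounded, since $|\psi(k)-\beta_\xi(g)|\le\Delta$. Choose a left-invariant mean $\pi$ (replacing $k$ by $k^{-1}$ if needed to match the convention in the paper), and define $\tilde\beta(g)=\pi(k\mapsto \beta_\xi(gk)-\beta_\xi(k))$. A telescoping argument shows that $\tilde\beta$ is a homomorphism. It lies within $\Delta$ of $\beta_\xi$, and homogeneity forces $\tilde\beta=\beta_\xi$, because two homogeneous quasimorphisms at bounded distance coincide ($|n\beta_\xi(g)-n\tilde\beta(g)|$ stays bounded in $n$).

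\textbf{Proper case.} Here I would make the horokernel cocycle-like. Take $h\in\mathcal H_\xi$ as a limit of $d(\cdot,x_n)-d(\cdot,x_n)$. It satisfies the exact cocycle identity $h(x,z)=h(x,y)+h(y,z)$. The trouble is that $h$ is not $G$-invariant, only $G$-quasi-invariant: since $G$ fixes $\xi$, the functions $h(g\cdot,g\cdot)$ and $h$ differ by at most a uniform constant $C(\delta)$.

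Properness should let me replace $h$ by an invariant one. The set $\mathcal H_\xi$ is compact in the pointwise topology, and $G$ acts on it. I would average, or pick a $G$-fixed class, via the following reduction. For horokernels based at $\xi$, the difference $h-h'$ is a function of the form $(x,y)\mapsto b(y)-b(x)$, after passing to Busemann functions $b_h(y)=h(x_0,y)$. Properness of $X$ gives Arzelà--Ascoli compactness of the space of 1-Lipschitz normalized functions.

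Then $g\mapsto b_h\circ g^{-1}-b_h$ is, up to constants, a function that is constant on $X$. Concretely, I would show that $b_h(g^{-1}y)-b_h(y)$ is independent of $y$ up to bounded error. Then $\beta_\xi(g)$ equals the limit of $\frac1n(b_h(g^nx)-b_h(x))$, which is approximately $\beta_\xi(g)=c(g)$ with $c(gk)=c(g)+c(k)$ exactly. This holds if the error vanishes in the limit, which I would get by taking an accumulation point in $\mathcal H_\xi$ of $G$-translates and then averaging.

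I expect this last step, producing a genuinely additive cocycle from properness, to be the main obstacle. If direct averaging fails, my fallback is the following. The closure of $G$ in $\Isom(X)$ is locally compact, because $X$ is proper. Its stabilizer of $\xi$ is then a locally compact group whose action on $\partial X\setminus\{\xi\}$ allows the argument of CCMT; alternatively, one can show that the stabilizer is amenable modulo compact, which reduces to the first case.
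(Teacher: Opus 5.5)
The paper gives no proof of this lemma; it quotes it from Caprace--Cornulier--Monod--Tessera. So the question is whether your argument stands on its own. Your amenable case does: it is the standard averaging argument. With the paper's convention $g\cdot\varphi(h)=\varphi(g^{-1}h)$, the invariant mean is already the one your telescoping step needs.

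The proper case has a genuine gap. Arzel\`a--Ascoli compactness of $\mathcal H_\xi$ gives no invariance. Producing a $G$-fixed horokernel, or a $G$-invariant ``average'', requires an invariant mean or measure, i.e.\ some amenability, and $G$ is not assumed amenable. This is not a technicality. Take $G=F_2\times\Z$ with $F_2<SO(3)$, acting on $X=\R\times S^2$ with the $\ell^1$-metric, which is proper, geodesic and a quasi-line, and let $\xi=+\infty$. The horokernels based at $\xi$ are exactly $h_p\bigl((t,u),(s,v)\bigr)=s-t+d(u,p)-d(v,p)$ for $p\in S^2$. The group $G$ permutes them through its rotation action on $S^2$, which has no fixed point. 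So there is no invariant horokernel and no $G$-invariant mean, yet $\beta_\xi(a,n)=n$ is a homomorphism.

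Your fallback is false as stated. Let $F_2$ act properly on the locally finite combinatorial horoball over a Cayley graph of $F_2$; this is a proper hyperbolic space with a single boundary point. There, $\Isom(X)_\xi$, and even the closure of $G$, contains a discrete $F_2$, so neither is amenable modulo compact. Moreover, even when the closure of $G$ is amenable, this does not ``reduce to the first case'', because that case needs $G$ itself to be amenable, which fails in the $SO(3)$ example.

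Here is what is missing.
\begin{enumerate}
\item Lemma~\ref{busemann homo q.m.} disposes of groups without loxodromic elements, since then $\beta_\xi\equiv 0$. This is what happens in the horoball example.
\item If $G$ contains a loxodromic element, the action is lineal or focal and its orbits are quasi-convex. Then the closure $\overline G$ of $G$ in the locally compact group $\Isom(X)$ acts properly and cocompactly on a closed neighbourhood $Y$ of an orbit. This is the situation of Lemma~\ref{Lem: Amenable}, so $\overline G$ is amenable as a locally compact group.
\item The group $\overline G$ then acts jointly continuously on the compact metrizable space $\mathcal H_\xi$, because horokernels are $1$-Lipschitz in each variable. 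Hence it preserves a probability measure $\mu$ on $\mathcal H_\xi$. Set $\phi(g)=\int_{\mathcal H_\xi}h(x,gx)\,d\mu(h)$. By the exact cocycle identity and the invariance of $\mu$, $\phi(gk)=\phi(g)+\int h(gx,gkx)\,d\mu(h)=\phi(g)+\phi(k)$. Also $|h(x,gx)-\beta_\xi(g)|$ is bounded uniformly in $h$ and $g$, so $\phi$ is at bounded distance from $\beta_\xi$, and homogeneity forces $\phi=\beta_\xi$.
\end{enumerate}
Step (iii) is the rigorous form of your ``averaging''. The essential point is that one averages over the locally compact closure $\overline G$, not over $G$, and that amenability of this closure, after the reduction in (i), is the real input you did not supply.
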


Recall that a group action of $G$ on a Gromov-hyperbolic space $X$ is lineal if $|\Lambda(G)|=2$. For a lineal action $G\curvearrowright X$, we say it is \textit{orientable} if no element of $G$ permutes the two limit points of $\Lambda (G)$. Note that the orbit of a lineal action is a quasi-line, which is a metric space quasi-isometric to $\R$ with the standard metric.

Combining  Lemma \ref{busemann homo q.m.} and Lemma~\ref{Lem: BusemannHomo} gives that
\begin{lemma}\label{Lem: ProperQLImpHomo}
    Let $G$ be a finitely generated group acting isometrically on a Gromov-hyperbolic space $X$. Assume that $G$ is amenable or that $X$ is proper. If the action is either horocyclic, focal or orientable lineal, then there exists a homomorphism $\phi: G\to \R$ such that $\phi(g)\neq 0$ if and only if $g$ is loxodromic on $X$.
\end{lemma}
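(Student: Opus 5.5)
The idea is to derive the homomorphism from the Busemann quasimorphism at a suitable fixed boundary point, treating the three types of action separately.

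\emph{Choosing the boundary point.} In the horocyclic case, Theorem~\ref{Thm: Classification}(2) says $\Lambda(G)=\{\xi\}$ consists of points fixed by $G$, so $G$ fixes $\xi\in\partial X$. In the focal case, Theorem~\ref{Thm: Classification}(4a) gives a global fixed point $\xi\in\partial X$. In the orientable lineal case, $\Lambda(G)=\{\xi_+,\xi_-\}$ is $G$-invariant because the limit set is invariant. Orientability means no element swaps the two points, so every element fixes both, and we take $\xi=\xi_+$. In all three cases $\stab_G(\xi)=G$.

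\emph{Building the homomorphism.} Since $G$ is finitely generated and fixes $\xi$, Proposition~\ref{Prop: BusemannQM} gives the Busemann homogeneous quasimorphism $\beta_\xi\colon G\to\R$. We assume that $G$ is amenable or $X$ is proper, so Lemma~\ref{Lem: BusemannHomo} upgrades $\beta_\xi$ to a homomorphism. Set $\phi=\beta_\xi$. Because $G$ fixes $\xi$, Lemma~\ref{busemann homo q.m.} applies and gives $\phi(g)\neq 0$ if and only if $g$ is loxodromic on $X$.

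\emph{Main obstacle.} The only delicate step is checking that the hypotheses of the cited lemmas hold exactly in each case. The key point is the lineal case, where orientability is precisely what makes the whole group, not just an index-two subgroup, fix a boundary point.

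In the horocyclic case there are no loxodromics, so the conclusion means $\phi\equiv 0$. This agrees with Lemma~\ref{busemann homo q.m.}, since no element is loxodromic.
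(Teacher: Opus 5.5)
Your proof is correct and follows the paper's approach: the paper obtains this lemma directly by combining Lemma~\ref{busemann homo q.m.} and Lemma~\ref{Lem: BusemannHomo}, applied to the Busemann quasimorphism at a boundary point fixed by $G$. You additionally spell out the check, left implicit in the paper, that in each of the three cases the whole group $G$ fixes a point of $\partial X$, with orientability handling the lineal case.
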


\section{Tits Alternative}\label{sec: TA}

In this section, we are going to explore a classical algebraic property, i.e. the Tits Alternative, for subgroups in groups with property (PPH) or property (PPT). 

Recall that a group $G$ has property (PPH) if there exist finitely many proper Gromov-hyperbolic spaces $X_1,\ldots, X_l$ on which $G$ acts cocompactly such that the induced diagonal action $G\curvearrowright \prod_{i=1}^lX_i$ is proper. The following lemma can be deduced immediately from the definition of proper actions.

\begin{lemma}\label{Lem: NoElliptic}
    Let $G$ be a group admitting a proper diagonal action on the $\ell^1$-product $\prod_{i=1}^lX_i\times Y$ of metric spaces. Suppose that $G$ has a bounded orbit in $Y$. Then the induced diagonal action $G\curvearrowright \prod_{i=1}^lX_i$ is still proper.
\end{lemma}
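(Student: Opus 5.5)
The argument works directly from the definition of a metrically proper action; the only input needed is a comparison of the $\ell^1$ distances in the two products. Fix basepoints $x_i\in X_i$ and $y\in Y$, and write $\mathbf{x}=(x_1,\ldots,x_l)\in\prod_{i=1}^lX_i$ and $(\mathbf{x},y)\in\prod_{i=1}^lX_i\times Y$. Since the $G$-orbit of some point of $Y$ is bounded, every $G$-orbit in $Y$ is bounded: for any $y'\in Y$ and $g\in G$ we have $d(y',gy')\le d(y',y_0)+d(y_0,gy_0)+d(gy_0,gy')=2d(y',y_0)+d(y_0,gy_0)$, where $y_0$ has bounded orbit. So there is a constant $D\ge 0$ with $d_Y(y,gy)\le D$ for all $g\in G$.

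Now fix $r>0$ and consider the set
$$S_r=\Bigl\{g\in G:\ d\bigl(\mathbf{x},g\mathbf{x}\bigr)\le r\Bigr\},$$
where $d$ is the $\ell^1$-metric on $\prod_{i=1}^lX_i$. For $g\in S_r$ the $\ell^1$-metric on the larger product gives
$$d\bigl((\mathbf{x},y),g(\mathbf{x},y)\bigr)=\sum_{i=1}^l d_{X_i}(x_i,gx_i)+d_Y(y,gy)\le r+D.$$
Thus $S_r$ is contained in $\{g\in G: d((\mathbf{x},y),g(\mathbf{x},y))\le r+D\}$, which is finite because the action on $\prod_{i=1}^lX_i\times Y$ is proper. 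Hence $S_r$ is finite.

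To finish, I will note that properness at one basepoint implies properness at every basepoint. For $\mathbf{x}'$ arbitrary, the triangle inequality gives $d(\mathbf{x},g\mathbf{x})\le d(\mathbf{x}',g\mathbf{x}')+2d(\mathbf{x},\mathbf{x}')$, so $\{g: d(\mathbf{x}',g\mathbf{x}')\le r\}\subseteq S_{r+2d(\mathbf{x},\mathbf{x}')}$, which is finite. In fact, since $\mathbf{x}$ was arbitrary, this step could be skipped. The only point requiring care is the uniform bound $D$, which is the triangle-inequality remark above; nothing else is an obstacle.
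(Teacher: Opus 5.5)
Your proof is correct and is the same argument the paper has in mind; the paper gives no written proof and only says the lemma follows immediately from the definition of a proper action. You bound $d_Y(y,gy)$ uniformly in $g$, then use the $\ell^1$-metric to place $\{g: d(\mathbf{x},g\mathbf{x})\le r\}$ inside a finite set of the form $\{g: d((\mathbf{x},y),g(\mathbf{x},y))\le r+D\}$, which is exactly that direct verification.
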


\subsection{Actions on quasi-lines}

Quasi-lines are special examples of Gromov-hyperbolic spaces. We first deal with proper actions of finite products of quasi-lines.
\begin{lemma}\label{Lem: UniBdd}
    Let $G$ be a finitely generated group acting orientable lineally on a Gromov-hyperbolic space $X$. Assume that $G$ is amenable or that $X$ is proper. Then for any $o\in X$, $\sup_{g\in [G,G]}d(o,go)<\infty$.
\end{lemma}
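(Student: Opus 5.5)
We plan to use the Busemann homomorphism. Since the action is orientable lineal, $G$ fixes both points of $\Lambda(G)=\{\xi^+,\xi^-\}$. Under either hypothesis (amenable, or $X$ proper), Lemma~\ref{Lem: BusemannHomo} makes $\beta=\beta_{\xi^+}\colon G\to\R$ a homomorphism, so $\beta$ vanishes on $[G,G]$. By Lemma~\ref{busemann homo q.m.}, no element of $[G,G]$ is loxodromic. The action is lineal, so $G$ has no parabolic elements: a parabolic element fixing a single point would, together with a loxodromic $g_0$ fixing $\xi^\pm$, force a contradiction with the classification. Hence every element of $[G,G]$ is elliptic. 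This does not yet give a uniform bound, so the real content is to control displacement uniformly.

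To get uniformity, replace $X$ by a quasi-line. Fix $o\in X$ and a loxodromic $g_0\in G$. The orbit $Go$ lies in a bounded neighbourhood of a quasi-geodesic line $\ell$ joining $\xi^-$ to $\xi^+$, for instance the $g_0$-axis $\bigcup_i g_0^i[o,g_0o]$. This holds because lineal actions have quasi-convex orbits whose limit set is two points, and any element of $G$ moves $\ell$ to a quasi-geodesic with the same endpoints, hence within uniformly bounded Hausdorff distance by Morse stability. Project to $\ell\simeq\R$ to get a coarse "height" $\pi\colon X\to\R$ such that each $g\in G$ acts on $\ell$ coarsely as a translation $t\mapsto t+\tau(g)$. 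The error here is uniform in $g$, since orientability rules out flips and every $g$ quasi-preserves $\ell$ with uniform constants. I would then verify that $\tau$ agrees with $\beta$ up to a uniform additive constant, i.e.\ $|\pi(go)-\pi(o)-\beta(g)|\le C$ for all $g$. The way to see this is that the horokernel based at $\xi^+$, restricted to $\ell$, is within bounded error of the signed distance along $\ell$, so $h(o,go)$ and $\beta(g)$ differ by a constant independent of $g$. This uses that $\beta(g)=\lim\frac1n h(o,g^no)$ and $h(o,g^no)$ is a quasimorphism-like quantity with uniform defect.

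Given this, for $g\in[G,G]$ we have $\beta(g)=0$, so $|\pi(go)-\pi(o)|\le C$. Since $go$ lies within bounded distance of $\ell$, and points of $\ell$ with close heights are close, $d(o,go)\le C'$ uniformly in $g$. This proves $\sup_{g\in[G,G]}d(o,go)<\infty$. Changing $o$ changes the bound by at most $2d(o,o')$.

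The main obstacle is the uniform comparison $|h(o,go)-\beta(g)|\le C$, since homogenization only controls $\beta$ asymptotically. The cleanest route is to show that $g\mapsto h(o,go)$ is itself a quasimorphism on $G$ with bounded defect. This follows from the cocycle identity $h(x,z)=h(x,y)+h(y,z)$ for horokernels together with the fact that $G$ fixes $\xi^+$, so $h(go,g^2o)$ differs from $h(o,go)$ by a bounded amount because $g_*h$ is another horokernel at $\xi^+$, and any two horokernels at the same point differ by a bounded function. A quasimorphism is within bounded distance of its homogenization, which yields the bound.
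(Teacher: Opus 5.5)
Your argument is correct, but it takes a different route from the paper.

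\textbf{The paper's route.} The paper argues qualitatively. Since the Busemann homomorphism kills $[G,G]$, the subgroup $[G,G]$ contains no loxodromic element. By Theorem~\ref{Thm: Classification} its action is therefore elliptic or horocyclic. The paper rules out the horocyclic case by asserting that every subgroup of $G$ has bounded or quasi-line orbits, and ellipticity then gives the uniform bound.

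\textbf{Your route.} You prove the quantitative estimate $|h(o,go)-\beta(g)|\le C(\delta)$ for all $g\in G$. This comes from the fact that $g\mapsto h(o,go)$ is a quasimorphism whose homogenization is $\beta$; this is exactly the mechanism behind Proposition~\ref{Prop: BusemannQM}. You then turn a small horokernel value, together with proximity of $go$ to the axis $\ell$, into a small distance $d(o,go)$.

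\textbf{Comparison.} Your approach is self-contained and gives an explicit bound. It proves the stronger statement $\sup_{g\in\ker\beta}d(o,go)<\infty$. It also supplies the justification the paper leaves implicit: why a subgroup with no loxodromics cannot act horocyclically on a quasi-line. Its displacements are pinned near $\beta=0$, so its orbits are bounded. The paper's proof is shorter because it outsources this to the classification theorem. Both proofs use the hypothesis ``amenable or $X$ proper'' at the same single point: to make $\beta$ vanish on all of $[G,G]$, not merely be bounded on individual commutators.

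\textbf{Two minor remarks.}
\begin{enumerate}
\item The elementwise-ellipticity discussion in your first paragraph is not used in your final argument.
\item The intermediate height $\pi$ with translation numbers $\tau(g)$ is best dropped. For a $(K,C)$-quasi-geodesic parametrization of $\ell$ with $K>1$, isometries induce only $(K^2,C')$-quasi-isometries of $\R$, and these need not be uniformly close to translations. Using $h(o,\cdot)$ itself as the height, as in your last paragraph, avoids this issue.
\end{enumerate}
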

\begin{proof}
    By Lemma~\ref{Lem: ProperQLImpHomo}, there is a homomorphism $\phi\colon G\rightarrow\mathbb{R}$ such that $\phi(g)\ne0$ if and only if $g$ is a loxodromic isometry. Since $\phi$ is a homomorphism, $[G,G]\le \ker \phi$. Hence, the subgroup action $[G,G]\curvearrowright X$ does not admit any loxodromic isometry and thus is either elliptic or horocyclic. Since the orbit of $G$ in $X$ is a quasi-line, the orbit of any subgroup of $G$ in $X$ is either bounded or a quasi-line. This forces the action $[G,G]\curvearrowright X$ to be elliptic. By Theorem \ref{Thm: Classification}, for any $o\in X$, $\sup_{g\in [G,G]}d(o,go)<\infty$.
\end{proof}

\begin{lemma}\cite[Lemma II.7.9]{BH99}\label{Lem: FinCom}
    If a finitely generated group $G$ has a finite commutator subgroup $[G,G]$, then $G$ is virtually abelian.
\end{lemma}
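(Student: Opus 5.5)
The plan is to show that the center $Z(G)$ has finite index in $G$. Since $Z(G)$ is abelian, this immediately gives that $G$ is virtually abelian. The argument only uses that $[G,G]$ is finite and that $G$ has a finite generating set. It is purely algebraic and needs none of the geometry developed earlier in the paper.

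\emph{Step 1: each generator has a finite conjugacy class.} Fix a finite generating set $\{s_1,\ldots,s_k\}$ of $G$. For any $g\in G$ and any $i$ we have
\[
gs_ig^{-1}=(gs_ig^{-1}s_i^{-1})\,s_i\in [G,G]\,s_i .
\]
So the conjugacy class of $s_i$ lies in the coset $[G,G]s_i$. This coset is finite because $[G,G]$ is finite.

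\emph{Step 2: centralizers have finite index.} By orbit--stabilizer for the conjugation action, the index $[G:C_G(s_i)]$ equals the size of the conjugacy class of $s_i$. By Step 1 this index is finite.

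\emph{Step 3: the center has finite index.} An element commutes with every element of $G$ exactly when it commutes with each generator, so
\[
Z(G)=\bigcap_{i=1}^k C_G(s_i).
\]
A finite intersection of finite-index subgroups has finite index. Hence $[G:Z(G)]<\infty$, and $Z(G)$ is the required finite-index abelian subgroup.

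There is no real obstacle here. The only point needing care is to use finite generation exactly once, in Step 3, to write the center as a finite intersection of centralizers. Without finite generation this step fails, and so does the conclusion.
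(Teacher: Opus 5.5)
Your proof is correct: the conjugates of each generator $s_i$ lie in the finite coset $[G,G]s_i$, so each $C_G(s_i)$ has finite index, and $Z(G)=\bigcap_i C_G(s_i)$ is then a finite-index abelian subgroup. The paper gives no proof of its own and simply cites Bridson--Haefliger, and your argument is the standard proof of this fact.
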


For a virtually abelian group $G$, we say $G$ has \textit{rank} $\le n$ if $G$ contains a finite-index subgroup which is isomorphic to $\Z^r$ for some $r\le n$. 

\begin{lemma}\label{Lem: AllQuaLine}
    Let $G$ be a finitely generated group which has property (PPH) on $X_1,\ldots,X_l$. Let $H\le G$ be a finitely generated subgroup such that each action $H\curvearrowright X_i$ is either elliptic or lineal for $1\le i\le l$. Then $H$ is virtually abelian of rank $\le m$ where $m$ is the number of lineal actions in $H\curvearrowright X_i$'s. 
\end{lemma}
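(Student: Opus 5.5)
Without loss of generality, and using Lemma~\ref{Lem: NoElliptic}, I first discard every $X_i$ on which $H$ acts elliptically. The diagonal action of $G$, and hence of $H$, on $\prod X_i$ is proper, and $H$ has bounded orbits on those factors. So $H$ acts properly on the $\ell^1$-product of the $m$ spaces $X_{i_1},\ldots,X_{i_m}$ on which it acts lineally. Each lineal action of $H$ induces a homomorphism $H\to \Z/2$ recording whether an element swaps the two limit points. I pass to the intersection $H_0$ of these $m$ kernels, which has index at most $2^m$ in $H$. It is finitely generated, each action $H_0\curvearrowright X_{i_j}$ is still lineal and now orientable, and $H_0$ still acts properly on the product.

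Next I apply Lemma~\ref{Lem: UniBdd} to each factor; this is allowed because each $X_{i_j}$ is proper. For a basepoint $o_j\in X_{i_j}$ it gives $\sup_{g\in[H_0,H_0]} d(o_j,go_j)<\infty$. Summing over $j$, the orbit of $(o_1,\dots,o_m)$ under $[H_0,H_0]$ is bounded in the $\ell^1$-product. Properness then forces $[H_0,H_0]$ to be finite. By Lemma~\ref{Lem: FinCom}, $H_0$ is virtually abelian, and hence so is $H$.

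To bound the rank, I use Lemma~\ref{Lem: ProperQLImpHomo} on each factor. It gives homomorphisms $\phi_j\colon H_0\to\R$ such that $\phi_j(g)\neq0$ exactly when $g$ is loxodromic on $X_{i_j}$. I take a finite-index subgroup $A\cong\Z^r$ of $H_0$ and consider $\Phi=(\phi_1,\dots,\phi_m)\colon A\to\R^m$. I claim $\ker\Phi$ is finite, hence trivial since $A$ is torsion-free. Indeed, if $g\in\ker\Phi$, then $g$ is non-loxodromic on every factor, hence elliptic or parabolic there. Since each orbit is a quasi-line, the cyclic group $\langle g\rangle$ acts elliptically on each factor, just as in the proof of Lemma~\ref{Lem: UniBdd}. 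Moreover, $\ker\Phi$ is a subgroup of $A$ all of whose elements are non-loxodromic everywhere. As in the proof of Lemma~\ref{Lem: UniBdd}, its action on each quasi-line orbit is therefore elliptic, so it has bounded orbits on the whole product and is finite by properness. Thus $\Phi$ is injective, so $A\cong\Z^r$ embeds in $\R^m$ as an abelian group. That alone does not bound $r$, since $\Z^2$ embeds in $\R$.

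The main obstacle is exactly this step: bounding $r\le m$ needs a discreteness argument, not just injectivity. I would replace $\Phi$ by its geometric content. For $a\in A$, the displacement of $a$ on $X_{i_j}$ is comparable to $|\phi_j(a)|$ up to multiplicative constants, since each $\phi_j$ is a Busemann homomorphism on a quasi-line. So $d\bigl(o,ao\bigr)\asymp\|\Phi(a)\|_1$ up to uniform additive and multiplicative constants, at least on the powers $a^n$ by homogeneity. Properness then says that only finitely many $a\in A$ have $\|\Phi(a)\|_1\le R$, so $\Phi(A)$ is a discrete subgroup of $\R^m$, and a discrete subgroup of $\R^m$ has rank at most $m$.

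To make the displacement comparison rigorous, I would use that the orbit map of $H_0$ into each $X_{i_j}$ has image a quasi-line. Under a quasi-isometry to $\R$, the function $\phi_j$ agrees with the coordinate $a\mapsto$ (position of $ao_j$) up to bounded error. This is because both are quasimorphisms with the same homogenization, and a homomorphism to $\R$ differs from such a coordinate function by a bounded amount on an abelian group. This gives $d(o_j,ao_j)\le C|\phi_j(a)|+C$, which is the inequality needed for discreteness.
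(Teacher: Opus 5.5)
Your proof is correct and follows the paper's argument. You use the same reduction via Lemma~\ref{Lem: NoElliptic}, the same passage to a finite-index subgroup acting orientably lineally, and Lemma~\ref{Lem: UniBdd} plus properness plus Lemma~\ref{Lem: FinCom} for virtual abelianness. For the rank, your discreteness of $\Phi(A)\subset\R^m$ repackages the paper's integer approximations $\lfloor nz\rfloor$ of a real kernel vector. Both versions rest on the same estimate: a bounded Busemann value forces bounded displacement on an orientable lineal factor, which the paper quotes from Coornaert--Delzant--Papadopoulos.

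One correction to your justification of that estimate. The coordinate function is a quasimorphism within bounded distance of $\phi_j$ only for a good choice of coordinate. A horofunction at the fixed end works, as does nearest-point projection to a geodesic joining the two limit points. An arbitrary quasi-isometry to $\R$ does not work: composing with a piecewise-linear map of slopes $1$ and $2$ destroys the quasimorphism property. With such a choice, your inequality $d(o_j,ao_j)\le C|\phi_j(a)|+C$ does hold.
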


\begin{proof}
    If $H\curvearrowright X_i$ is elliptic for all $1\le i\le l$, then the orbit of $H$ in $\prod_{i=1}^lX_i$ is bounded. Since the action $G\curvearrowright \prod_{i=1}^lX_i$ is proper, one gets that $H$ must be a finite group, which can be seen as virtually abelian of rank 0. 

    WLOG, suppose that there exists $1\le m\le l$ such that $H\curvearrowright X_i$ is lineal for $1\le i\le m$ and $H\curvearrowright X_i$ is elliptic for $m+1\le i\le l$. By Lemma~\ref{Lem: NoElliptic}, the diagonal action $H\curvearrowright \prod_{i=1}^mX_i$ is still proper.
    Up to taking a finite-index subgroup, we can assume that for any $1\leq i\leq m$, $H\curvearrowright X_i$ is  orientable lineal. Then by Lemma~\ref{Lem: UniBdd}, there exist $o_i\in X_i$ and $k_i>0$ such that $\sup_{h\in [H,H]}d\left(o_i,ho_i\right)<k_i$.  Denote $\boldsymbol{o}=\left(o_1,\cdots,o_m\right)\in \prod_{i=1}^m X_i$. Under $\ell^1$-norm, we have
    $$\sup_{h\in [H,H]}d\left(\boldsymbol{o},h\boldsymbol{o}\right)\leq\sum_{i=1}^m\sup_{h\in [H,H]}d\left(o_i,ho_i\right)<k_1+\cdots+k_m\coloneqq k<\infty.$$
    Together with the proper subgroup action $[H,H]\curvearrowright \prod_{i=1}^m X_i$, one has that $[H,H]$ is finite. By Lemma~\ref{Lem: FinCom}, $H$ is virtually abelian. It remains to show that the rank of $H$ is no more than $m$.

    Suppose that the rank of $H$ is $r$ and $\mathbb{Z}^r\cong \bracket{h_1,\cdots,h_r}\leq H$ is a finite-index subgroup of $H$. By Lemma~\ref{Lem: ProperQLImpHomo}, for each $1\leq i\leq m$, there exists a homomorphism $\varphi_i\colon H\rightarrow\mathbb{R}$ such that $\varphi_i(h)\neq 0$ if and only if $h$ is a loxodromic isometry under the action $H\curvearrowright X_i$. 
    
    Consider the following system of homogeneous linear equations:
    \begin{equation}
        \left\{\begin{array}{c}
        x_1\varphi_1\left(h_1\right)+\cdots+x_r\varphi_1\left(h_r\right)=0,\\
        \vdots\\
        x_1\varphi_m\left(h_1\right)+\cdots+x_r\varphi_m\left(h_r\right)=0.
    \end{array}\right.\tag{$\ast$}
    \end{equation}
    
    If $r> m$, there must exist a non-zero solution of ($\ast$), say $\boldsymbol{z}=\left( z_1,\cdots,z_r \right)$. Moreover, up to scaling, we can assume that $\min_{\substack{1\leq i\leq r \\ z_i\ne 0}}\abs{z_i}=1$.  For each $n\ge 1$, we denote $\lfloor n\boldsymbol{z}\rfloor=\left( \lfloor nz_1\rfloor,\cdots,\lfloor nz_r\rfloor \right)$ where $\lfloor x\rfloor=\max\left\{y\in\mathbb{Z}:y\leq x\right\}$. It is easy to see that $\min_{\substack{1\leq i\leq r \\ z_i\ne 0}}\abs{\lfloor nz_i\rfloor}=n$. Thus $\{\lfloor n\boldsymbol{z}\rfloor: n\ge 1\}$ is an infinite sequence of integer vectors. 
    
    Denote $K=\sum_{1\leq i\leq m, 1\leq j\leq r}\abs{\varphi_i\left(h_j\right)}$. Then for any $1\leq i\leq m $ and $ n\ge 1$, we have 
    \begin{align*}
        \abs{\varphi_i\left(h_1^{\lfloor nz_1\rfloor}\cdots h_r^{\lfloor nz_r\rfloor}\right)}&=\abs{\lfloor nz_1\rfloor\varphi_i\left(h_1\right)+\cdots+\lfloor nz_r\rfloor\varphi_i\left(h_r\right)}\\
        &=\abs{\lfloor nz_1\rfloor\varphi_i\left(h_1\right)+\cdots+\lfloor nz_r\rfloor\varphi_i\left(h_r\right)-(nz_1\varphi_i\left(h_1\right)+\cdots+nz_r\varphi_i\left(h_r\right))}\\
        &\leq\abs{\left(\lfloor nz_1\rfloor-nz_1\right)\varphi_i\left(h_1\right)}+\cdots+\abs{\left(\lfloor nz_r\rfloor-nz_r\right)\varphi_i\left(h_r\right)}\\
        &\leq\sum_{j=1}^r\abs{\varphi_i\left(h_j\right)}\le K.
    \end{align*}

    Denote $A=\{h_1^{\lfloor nz_1\rfloor}\cdots h_r^{\lfloor nz_r\rfloor}: n\ge 1\}\subset H$. By the above estimates and the construction of Busemann quasimorphisms (cf. Proposition \ref{Prop: BusemannQM}), each element $h\in A$ has a uniformly bounded stable translation length on each $X_i$. Since each action $H\curvearrowright X_i$ is orientable lineal,  there exists $K'>0$ such that $d(o_i,ho_i)\le K'$ for any $h\in A$ and $1\le i\le m$ (cf. \cite[Proposition 10.6.4]{CDP90}). This is a contradiction to the properness of $H\curvearrowright\prod_{i=1}^m X_i$ since $A$ is infinite. Therefore, $H$ is virtually abelian of rank $\leq m$.
\end{proof}

\subsection{Regular focal actions and confining subsets}
At first, we introduce some basic materials about regular focal actions and confining subsets developed in \cite{CCMT15}. 

Let $G\curvearrowright X$ be a focal action of a finitely generated group $G$ on a Gromov-hyperbolic space $X$ and $\xi\in\partial X$ be the unique fixed point. If the Busemann quasimorphism $\beta_{\xi}$ in Proposition~\ref{Prop: BusemannQM} is a homomorphism, we say that $G\curvearrowright X$ is \textit{regular focal}. If $X$ is proper, then by Lemma~\ref{Lem: BusemannHomo}, $\beta_\xi$ is a homomorphism. Therefore, in this paper, all the focal actions can be assumed to be regular focal.

Let $G$ be a group, let $\alpha$ be an automorphism of $G$ and $A$ a subset of $G$. We say that the action of $\alpha$ is \textit{confining $G$ into $A$} if it satisfies the following three conditions:
\begin{itemize}
    \item $\alpha(A)$ is contained in $A$;
    \item $G=\bigcup_{n\geq 0}\alpha^{-n}(A)$;
    \item $\alpha^{n_0}(A\cdot A)\subset A$ for some nonnegative integer $n_0$.
\end{itemize}

Note that for any isometric group action $G\curvearrowright X$ and any basepoint $x\in X$, one can define the pullback metric $d_x$ on $G$ by $d_x(g,h):=d(gx,hx)$.
\begin{theorem}\cite[Theorem 4.1]{CCMT15}\label{Thm: RegFocal}
    Let $G$ be a group with a cobounded isometric action on a geodesic metric space $X$. Then the following assertions are equivalent:
    \begin{enumerate}
        \item\label{it1} $X$ is Gromov-hyperbolic and the $G$-action is regular focal. 
        \item\label{it2} There exist $\alpha\in G$ and $A\subset [G,G]$ such that
        \begin{itemize}
            \item the image of $\alpha$ in $G/[G,G]$ has infinite order;
            \item the action of $\alpha$ (by conjugation) on $[G,G]$ is confining into $A$;
            \item setting $G_0=[G,G]\rtimes \langle \alpha\rangle$ (viewed as a subgroup of $G$) and $S=A\cup \{\alpha^{\pm}\}$, the inclusion map $(G_0,d_S)\to (G,d_x)$ is a quasi-isometry for some (hence every) $x\in X$.
        \end{itemize}
    \end{enumerate}
Moreover if (\ref{it2}) holds, the Busemann homomorphism in (\ref{it1}) is proportional, when restricted to $G_0$, with the obvious projection to $\Z$.
\end{theorem}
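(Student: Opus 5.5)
The plan is to prove the two implications separately, working throughout with the Busemann homomorphism $\beta=\beta_\xi$ and the horospheres $\{y:h(x,y)=c\}$ centred at the fixed point $\xi$.

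\emph{(\ref{it1})$\Rightarrow$(\ref{it2}).} Since the action is focal it has loxodromic elements, so $\beta\neq 0$ by Lemma~\ref{busemann homo q.m.}. Since $\beta$ is a homomorphism to $\R$, it vanishes on $[G,G]$ and factors through $G/[G,G]$. Choose a loxodromic $\alpha$ with $\beta(\alpha)\neq 0$, oriented so that $\alpha^{-n}x\to\xi$. Then $\beta(\alpha^k)=k\beta(\alpha)\neq 0$ for $k\neq 0$, so the image of $\alpha$ in $G/[G,G]$ has infinite order.

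The key geometric input is a thin-triangles estimate. Take two points at bounded Busemann height difference and at distance $D$. Their geodesic rays to $\xi$ come within $O(\delta)$ of each other after travelling roughly $D/2$ towards $\xi$. Consequently, for $g\in[G,G]\le\ker\beta$, the displacement $d(x,\alpha^{-n}g\alpha^{n}x)$ decreases by about $2n|\beta(\alpha)|$ until it becomes uniformly bounded.

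Fix a large constant $C$ and set
\[
A=\{g\in[G,G]: d(\alpha^{-n}x,\,g\alpha^{-n}x)\le C \text{ for all } n\ge 0\}.
\]
One checks the three confining conditions for conjugation by $\alpha^{-1}$ (or by $\alpha$, depending on the chosen convention).
\begin{enumerate}
\item $A$ is invariant under this conjugation, directly from the definition.
\item $[G,G]$ is the union of the pullbacks of $A$, by the contraction estimate above.
\item $\alpha^{n_0}(AA)\subset A$ for some $n_0$: the displacements of a product add to at most $2C$, and a fixed number $n_0$ of contraction steps brings this back below $C$.
\end{enumerate}

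For the quasi-isometry $(G_0,d_S)\to(G,d_x)$, the plan is as follows. A word in $S$ corresponds to a path that moves vertically by powers of $\alpha$ and horizontally by elements of $A$, which have small displacement at the appropriate height. This makes the inclusion Lipschitz. Conversely, a geodesic in $X$ between two orbit points is coarsely a "go up towards $\xi$, cross, come down" path, which gives the reverse inequality. I expect coarse density of $G_0x$ in $X$ to be the main obstacle. Coboundedness only gives density of $Gx$, and one must show that $[G,G]\langle\alpha\rangle$ already reaches every horosphere region. I would attack this by showing that $\ker\beta/[G,G]$ acts with bounded displacement at large height, using that $\ker\beta$ itself acts horocyclically (or elliptically) and that commutators $[g,\alpha^n]$ realize the horizontal moves of $g$.

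\emph{(\ref{it2})$\Rightarrow$(\ref{it1}).} Here I would first show that the Cayley graph $\mathrm{Cay}(G_0,S)$ is Gromov-hyperbolic. It resembles a combinatorial horoball over $[G,G]$ with height coordinate given by the projection $G_0\to\Z$. The confining conditions guarantee that two elements of $[G,G]$ at level $0$ can be joined by going up $O(\log)$ levels, crossing through $A$, and coming back down, and that all geodesics are of this shape up to bounded error. Thin triangles then follow as in the proof for Heintze groups or Baumslag--Solitar groups. Since $X$ is quasi-isometric to $(G_0,d_S)$, it is hyperbolic. The element $\alpha$ is loxodromic, and every element of $[G,G]$ fixes the endpoint $\xi=\lim\alpha^{-n}$. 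Because $[G,G]$ is normal and has $\xi$ as its unique fixed point, all of $G$ fixes $\xi$. With loxodromics present, this makes the action focal.

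Finally, the Busemann quasimorphism restricted to $G_0$ is, up to bounded error, the height function, that is, the projection to $\Z$. Hence it is a homomorphism on $G_0$ and, by homogeneity together with quasi-density, on $G$. This gives both regularity and the proportionality statement at the end of the theorem.
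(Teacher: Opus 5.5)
The paper does not prove this statement; it is quoted from \cite{CCMT15}. Your proposal therefore has to stand on its own.

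\textbf{Direction (i)$\Rightarrow$(ii).} Your outline is sound.
\begin{itemize}
\item Defining $A$ through ``for all $n\ge 0$'' makes conjugation-invariance automatic.
\item The horospherical contraction estimate gives the other two confining axioms and the bound $d_S(1,h)\lesssim d(x,hx)$ on $[G,G]$.
\item Watch the sign: the displacement at $\alpha^{-n}x$ is $d(x,\alpha^{n}g\alpha^{-n}x)$, not $d(x,\alpha^{-n}g\alpha^{n}x)$.
\item For coarse density, working through $\ker\beta/[G,G]$ is not enough when $\beta(G)$ is not cyclic, because elements with $\beta(g)\notin\Z\beta(\alpha)$ are not covered. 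The commutator identity works for every element: $g=[g,\alpha^n]\cdot\alpha^ng\alpha^{-n}$, and $d(x,\alpha^ng\alpha^{-n}x)=d(\alpha^{-n}x,g\alpha^{-n}x)\le|\beta(g)|+c$ for $n$ large. Apply this to $g'=g\alpha^{-k}$ with $|\beta(g')|\le|\beta(\alpha)|$, at the point $\alpha^kx$. This gives $Gx\subset N_D(G_0x)$.
\end{itemize}

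\textbf{Direction (ii)$\Rightarrow$(i): the genuine gap.} Your claim that all geodesics in $\mathrm{Cay}(G_0,S)$ have the ``up, across, down'' shape is exactly what must be proved. It does not follow from the confining axioms, which control $A$ but say nothing about $A^{-1}$, while $d_S$ uses both. Your step ``$[G,G]$ has $\xi$ as its unique fixed point'' is likewise unsupported, and it can fail.

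Here is an example. Take $G=BS(1,2)=\Z[1/2]\rtimes\Z$. Let $\alpha$ be the generator acting on $[G,G]=\Z[1/2]$ by $c\mapsto c/2$, and let $A=\{c\in\Z[1/2]:c\ge -1\}$.
\begin{itemize}
\item The confining axioms hold: $\alpha(A)\subset A$, $\bigcup_n\alpha^{-n}(A)=\Z[1/2]$, and $\alpha(A+A)=A$.
\item So (ii) holds for the action on $X=\mathrm{Cay}(G,A\cup\{\alpha^{\pm1}\})$.
\item However $\Z[1/2]=A\cup(-A)$, so $[G,G]$ has diameter at most $2$ in $d_S$.
\item Hence $X$ is a quasi-line, and the action is lineal, not focal.
\end{itemize}
Even more simply, $\Z$ acting on $\R$ by translations, with $A=[G,G]=\{0\}$, satisfies (ii). So (ii) as stated here needs a nondegeneracy input, such as unboundedness of $[G,G]$, and your argument never uses one.

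\textbf{What remains to be supplied, granting such a hypothesis.}
\begin{itemize}
\item \emph{Hyperbolicity.} You need an actual proof, not an analogy. For example, set $\ell(h)=\min\{n\ge0:\alpha^nh\alpha^{-n}\in A\}$, prove a horoball-type distance formula in terms of $\ell$, and verify the four-point condition using $\ell(hh')\le\max\{\ell(h),\ell(h')\}+n_0$. This requires control of inverses, e.g.\ $A=A^{-1}$.
\item \emph{Focality.} Every $h\in[G,G]$ has zero stable translation length, because $\alpha^nh^m\alpha^{-n}\in A$ for $n$ large and $A$ is bounded. Hence an unbounded $[G,G]$ acts horocyclically and has a unique fixed point, which $G$ must then fix by normality. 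You still have to exclude the lineal case.
\item \emph{Regularity.} Use that $\beta$ vanishes on $[G,G]$ and that $(gh)^n\in g^nh^n[G,G]$; together these force the homogeneous quasimorphism $\beta$ to be additive. ``Quasi-density'' alone does not give this.
\end{itemize}
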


The following proposition gives a way to pick the confining subset $A\subset [G,G]$ in a regular focal action. For a group $G$ with a metric $d$ on it, we use $B(1,r)$ to denote the set of all elements $g\in G$ satisfying $d(1,g)\le r$.
\begin{proposition}\cite[Propositions 4.2, 4.5]{CCMT15}\label{Prop: RegFocal}
    Let $X$ be a Gromov-hyperbolic space with a basepoint $x\in X$ and $(G,d_x)$ be a group with a cobounded, regular focal action on $X$. Let $\xi$ be the unique fixed point of the boundary and $\beta_{\xi}$ be the corresponding Busemann homomorphism. Set $H=[G,G]\subseteq \ker(\beta_{\xi})$ and let $\alpha\notin \ker(\beta_{\xi})$ with $\alpha^+=\xi$. Then:
    \begin{enumerate}
        \item $\langle H\cup \{\alpha\}\rangle\cong H\rtimes \langle \alpha\rangle$ is a cobounded, normal subgroup of $G$. 
        \item There exists $r_0>0$ satisfying: for all $r > 0$ there exists $n_0$ such that for all $n \ge n_0$, $\alpha^n(B(1, r)\cap H ) \subset  B(1, r_0)\cap H$. In particular, $\alpha$ is confining into $A = B(1, r_0)\cap H$.
    \end{enumerate}
\end{proposition}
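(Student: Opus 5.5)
We take the convention that the automorphism "$\alpha$" acts on $H=[G,G]$ by $h\mapsto \alpha^{-1}h\alpha$. With this choice, $d_x(1,\alpha^{-n}h\alpha^{n})=d(\alpha^n x, h\alpha^n x)$, so iterating the automorphism means looking at how $h$ moves points far out along the ray $\alpha^n x\to \alpha^+=\xi$. (If the paper's convention is the opposite one, replace $\alpha$ by $\alpha^{-1}$ throughout.) The whole argument rests on one geometric estimate, which I expect to be the main obstacle.

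\emph{Key estimate.} There is a constant $C=C(\delta,X,\alpha,x)$ with the following property. If $k\in G$, $|\beta_\xi(k)|\le b$ and $d(x,kx)\le r$, then there is $n_0=n_0(r)$ such that
$$d(\alpha^n x,k\alpha^n x)\le b+C\qquad\text{for all } n\ge n_0 .$$
To prove it, I will use the following ingredients.
\begin{itemize}
\item The orbit $(\alpha^n x)_{n\ge0}$ is a quasi-geodesic ray to $\xi$.
\item Since $k$ fixes $\xi$, $(k\alpha^n x)_n$ is a quasi-geodesic ray from $kx$ to $\xi$. By $\delta$-thinness and the Morse lemma, after time roughly $r$ the two rays fellow travel up to a shift of parameter.
\item That shift is measured by the horokernel, i.e.\ by $\beta_\xi(k)$ up to a uniform error. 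Here one uses that $\beta_\xi$ is a homomorphism (regularity), so the difference between $h(x,kx)$ and $\beta_\xi(k)$ is uniformly bounded.
\end{itemize}
Consequently $k\alpha^n x$ lies within $|\beta_\xi(k)|+C$ of $\alpha^n x$ for large $n$.

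\emph{Part (1).} Since $G/H$ is abelian, every subgroup containing $H$ is normal; in particular $G_1=\langle H\cup\{\alpha\}\rangle$ is normal. If $\alpha^n\in H$, then $n\beta_\xi(\alpha)=0$, so $n=0$. Hence $H\cap\langle\alpha\rangle=1$ and $G_1\cong H\rtimes\langle\alpha\rangle$.

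For coboundedness, fix $g\in G$. Choose $m\in\Z$ with $|\beta_\xi(\alpha^{-m}g)|\le|\beta_\xi(\alpha)|$ and put $k=\alpha^{-m}g$. By the key estimate, $d(\alpha^n x,k\alpha^n x)\le |\beta_\xi(\alpha)|+C$ for some large $n$. Write $k\alpha^n=c\,\alpha^n k$ with $c=[k,\alpha^n]\in H$. Then
$$d\bigl(kx,\ \alpha^{-n}c^{-1}\alpha^n x\bigr)\le |\beta_\xi(\alpha)|+C,$$
and $\alpha^{-n}c^{-1}\alpha^n\in G_1$. Therefore $gx=\alpha^m kx$ lies within a uniform distance $|\beta_\xi(\alpha)|+C$ of $G_1x$. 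Since $G$ acts coboundedly, so does $G_1$.

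\emph{Part (2).} Let $h\in B(1,r)\cap H$. Then $\beta_\xi(h)=0$ and $h$ fixes $\xi$. Applying the key estimate with $b=0$ gives $n_0(r)$ such that
$$d_x(1,\alpha^{-n}h\alpha^n)=d(\alpha^nx,h\alpha^nx)\le C \qquad\text{for all } n\ge n_0.$$
Set $r_0=C$; this proves the contraction statement.

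It remains to check that $\alpha$ is confining into $A=B(1,r_0)\cap H$.
\begin{itemize}
\item $H=\bigcup_n\alpha^{-n}(A)$, because every $h\in H$ lies in some ball $B(1,r)$ and is then contracted into $A$.
\item $\alpha^{n_0}(A\cdot A)\subset A$ follows by applying the contraction to the ball $B(1,2r_0)$, since $A\cdot A\subset B(1,2r_0)$.
\item The invariance $\alpha(A)\subset A$ may fail for the ball itself. I would fix this by replacing $A$ with $\bigcup_{n\ge0}\alpha^n(A)$. This set is still contained in $B(1,r_0')\cap H$ by the contraction applied to $B(1,r_0)$, it is $\alpha$-invariant, and the other two confining conditions persist after enlarging $r_0$ to $r_0'$.
\end{itemize}
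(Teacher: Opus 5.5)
Your proof is correct. The paper gives no argument for this statement; it is imported from \cite{CCMT15}. What you have written is therefore a self-contained reconstruction, built on one uniform horospherical displacement estimate. You use it once with $b\le|\beta_\xi(\alpha)|$, together with the identity $k\alpha^n=[k,\alpha^n]\,\alpha^n k$, to get coboundedness of $H\rtimes\langle\alpha\rangle$. You use it again with $b=0$ to get the contraction in (2). The key estimate, which you only sketch, is standard.

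Two small corrections:
\begin{itemize}
\item The uniform bound on $|h(x,kx)-\beta_\xi(k)|$ does not come from regularity. The map $g\mapsto h(x,gx)$ is a quasimorphism on $G$, because horokernels based at the common fixed point $\xi$ differ by a bounded amount. Any quasimorphism lies within its defect of its homogenization. What regularity actually supplies is $\beta_\xi|_H=0$, i.e.\ $b=0$ for $h\in H$.
\item Your parenthetical hedge on conventions should be dropped. Since $\alpha^+=\xi$ is fixed, you cannot trade $\alpha$ for $\alpha^{-1}$. Under $h\mapsto\alpha h\alpha^{-1}$, assertion (2) is false: for $BS(1,2)$ acting on $\mathbb{H}^2$ by $a(z)=z+1$, $t(z)=2z$, with $\alpha=t$, one has $t^nat^{-n}=a^{2^n}$, which leaves every ball. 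The statement holds only with your convention $h\mapsto\alpha^{-1}h\alpha$. This is also the convention the paper relies on in Step~2 of the proof of Theorem~\ref{MainThm}.
\end{itemize}

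The one place where you prove something other than what is written is the final clause, and there your deviation is forced. You obtain confinement into $A'=\bigcup_{n\ge0}\alpha^n(B(1,r_0)\cap H)$, which is squeezed between $B(1,r_0)\cap H$ and $B(1,r_0')\cap H$, not into the ball itself. The ball really need not be $\alpha$-invariant for any $r_0$ satisfying the contraction.

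Here is an example. Let $BS(1,2)$ act on $\mathbb{H}^2$ as above. Let it also act on the Cayley graph $Y$, with edges of length $10$, of its quotient $Q=\mathbb{Z}/5\rtimes\mathbb{Z}/4$ with respect to $\{\sigma^{\pm1},\tau^{\pm1}\}$; the quotient map is $a\mapsto\sigma$, $t\mapsto\tau$, with $\tau\sigma\tau^{-1}=\sigma^2$. Take the diagonal action on the $\ell^1$-product $X=\mathbb{H}^2\times Y$.
\begin{itemize}
\item This is a cobounded, regular focal action on a proper hyperbolic space, with $H=\{a^c: c\in\mathbb{Z}[1/2]\}$ and $\alpha=t$.
\item With $x=(i,1)$ one gets $d_x(1,\alpha^n(a^c))=d_{\mathbb{H}^2}(i,i+2^{-n}c)+10\,w_n$.
\item Here $w_n$ is the word length of the image $\sigma^{3^nc}$ of $\alpha^n(a^c)$ in $Q$ (recall $3\equiv 2^{-1}\bmod 5$). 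When $c\not\equiv0\bmod5$, $w_n$ alternates between $1$ and $2$.
\item Hence the contraction forces $r_0\ge 20$.
\item Now $d_{\mathbb{H}^2}(i,i+s)-d_{\mathbb{H}^2}(i,i+s/2)<2\log 2$, and $1+5\mathbb{Z}[1/2]$ is dense in $\mathbb{R}$. So for every $r_0\ge20$ there is $h=a^c$ with $c\in 1+5\mathbb{Z}[1/2]$ and $d_x(1,h)\le r_0<d_x(1,\alpha(h))$.
\end{itemize}

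So your $\alpha$-saturated set $A'$ is the correct form of the conclusion. It costs nothing in the paper's later application (the Claim in Step~1 of Theorem~\ref{MainThm}): one simply takes $N>r_0'$ instead of $N>r_0$.
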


There are some well-known facts about focal and general type actions. For their proofs, we refer the readers to \cite[Lemma 3.3]{CCMT15}. 
\begin{fact}\label{Fact: GTandQP}
    \begin{enumerate}
        \item A group admitting a general type action on a Gromov-hyperbolic space contains a non-abelian free subgroup $F_2$.
        \item A group admitting a focal action on a Gromov-hyperbolic space contains a non-abelian free subsemigroup, and thus has exponential growth.
        \item A focal action can not be a proper action.
    \end{enumerate}
\end{fact}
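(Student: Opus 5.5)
The three items are classical, and I would prove them in order with ping-pong arguments on $\partial X$, using only the classification in Theorem~\ref{Thm: Classification} and the north--south dynamics of loxodromic isometries. Recall that if $g$ is loxodromic, then for any neighbourhoods $U^\pm$ of $g^\pm$ in $\overline{X}$ there is $N$ such that $g^{n}(\overline{X}\setminus U^-)\subset U^+$ for all $n\ge N$.

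For (1), Theorem~\ref{Thm: Classification} provides two independent loxodromic elements $g_1,g_2$. I would choose pairwise disjoint neighbourhoods $U_1^\pm,U_2^\pm$ of the four points $g_1^\pm,g_2^\pm$, and set $U_i=U_i^+\cup U_i^-$. For $N$ large, $g_i^{\pm N}$ maps $\overline{X}\setminus U_i$ into $U_i^\pm$. The ping-pong lemma, applied to a basepoint outside $U_1\cup U_2$, then shows that $\langle g_1^N,g_2^N\rangle\cong F_2$.

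For (2), the focal case gives loxodromic elements $a,b$ with a common fixed point $\xi$ and distinct other endpoints. Such a pair exists because $|\Lambda(G)|=\infty$: take $b=gag^{-1}$ with $g a^{\pm}\notin\{a^\pm\}$ as appropriate. After replacing $a,b$ by inverses we may assume $a^+=b^+=\xi$ and $a^-\neq b^-$. Choose disjoint closed neighbourhoods $V_a\ni a^-$ and $V_b\ni b^-$, both avoiding $\xi$. Since $a^{-1}$ has repelling point $\xi$, for large $N$ we get $a^{-N}(V_a\cup V_b)\subset V_a$, and similarly $b^{-N}(V_a\cup V_b)\subset V_b$. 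Semigroup ping-pong then works as follows. Two distinct positive words in $s=a^{-N}$ and $t=b^{-N}$ can be reduced, after cancelling a common prefix, to words beginning with different letters. These send a point of $V_a\cup V_b$ into different sets $V_a$ and $V_b$, so the words are different elements. Hence $\{s,t\}$ generates a free semigroup, and counting positive words of length $n$ in a finite generating set containing $s,t$ gives exponential growth.

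For (3), which I expect to be the main obstacle, I would argue by contradiction, assuming the focal action $G\curvearrowright X$ is proper. In the setting of this paper $X$ is proper, so the action is regular focal by Lemma~\ref{Lem: BusemannHomo}. Since orbits of focal actions are quasi-convex, I would replace $X$ by a quasi-convex neighbourhood of an orbit. This space is again hyperbolic (and geodesic after passing to a suitable graph), the action on it is cobounded, and the action remains proper and focal. Proposition~\ref{Prop: RegFocal} then gives $\alpha$ and $r_0$ with the following property: for every $r$ there is $n$ such that $\alpha^n(B(1,r)\cap H)\subset B(1,r_0)\cap H$, where $H=[G,G]$. Properness makes $B(1,r_0)\cap H$ finite, of cardinality $c$ say. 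Since conjugation by $\alpha^n$ is injective, $|B(1,r)\cap H|\le c$ for every $r$, so $H$ is finite. Then $H$ has a bounded orbit, and Theorem~\ref{Thm: RegFocal} shows $G$ is quasi-isometric to $H\rtimes\langle\alpha\rangle$, which is virtually $\Z$. Its orbit is therefore a quasi-line, so $|\Lambda(G)|\le 2$, contradicting focality. The delicate points are justifying the cobounded reduction and keeping track of the metrics $d_x$ versus word metrics.
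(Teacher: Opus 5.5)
The paper gives no argument for this Fact; it simply defers to \cite[Lemma 3.3]{CCMT15}. Your proposal is therefore a self-contained substitute, and it is essentially correct.

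Items (1) and (2) are the standard ping-pong arguments. There is one small slip in (2). After cancelling the longest common prefix of two distinct positive words, one remainder may be empty (e.g.\ $s$ versus $st$), so the two words need not begin with different letters. You therefore also need that a nonempty positive word $w$ is not the identity. This follows from the same inclusions: if $w$ begins with $s$, then $w(V_b)\subset V_a$ and $V_a\cap V_b=\emptyset$. The case of $t$ is symmetric.

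For (3), your counting step is right. Injectivity of conjugation gives $\abs{B(1,r)\cap H}\le\abs{B(1,r_0)\cap H}<\infty$ for every $r$, so $[G,G]$ is finite. The route is, however, heavier and, as written, narrower than the statement.
\begin{itemize}
\item You invoke Lemma~\ref{Lem: BusemannHomo} on the original $X$. That lemma needs $X$ proper and $G$ finitely generated, and the Fact assumes neither.
\item Regularity is needed on the space to which you apply Proposition~\ref{Prop: RegFocal}, not on $X$.
\end{itemize}
Both issues disappear if you do the reduction first. Take the graph with vertex set $Gx$ and edges joining points at distance $\le R$. It is connected for large $R$, by quasi-convexity of the orbit, and locally finite, by metric properness of the action. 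So it is a proper hyperbolic graph with a proper, cocompact, focal $G$-action. Hence $G$ is finitely generated, and Lemma~\ref{Lem: BusemannHomo} and Proposition~\ref{Prop: RegFocal} apply there directly. You can then finish with Lemma~\ref{Lem: FinCom} and item (2) instead of Theorem~\ref{Thm: RegFocal}.

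Once you have this geometric action on a locally finite graph, the confining machinery is not needed at all. By \v{S}varc--Milnor, $G$ is a hyperbolic group, non-elementary because the limit set is infinite. Its action on the graph's boundary is its action on its own Gromov boundary, where every orbit is dense. So $G$ cannot fix a boundary point, which contradicts focality.
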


\subsection{Tits Alternative in groups with property (PPH)}

At first, we characterize subgroups obtained from non-general type actions. The following lemma is crucial in the study of groups with property (PPH) and will be used extensively in the following paragraphs.
\begin{lemma}\cite[Lemma 3.10]{CCMT15}\label{Lem: Amenable}
    Let $X$ be a proper quasi-geodesic Gromov-hyperbolic space having a cocompact isometry group. Then for every $\xi\in \partial X$, the stabilizer $\Isom(X)_{\xi}$ is amenable.
\end{lemma}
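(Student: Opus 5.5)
The plan is to deduce the statement from the amenability of the boundary action of the full isometry group, rather than to analyse $\Isom(X)_\xi$ directly. Set $L=\Isom(X)$ with the compact-open topology. Since $X$ is proper, the Arzel\`a--Ascoli theorem shows that $L$ is a locally compact, second countable group, and that $L$ acts properly on $X$. Cocompactness of the action and the quasi-geodesic hypothesis together make $L$ compactly generated, and the orbit map $L\to X$ a quasi-isometry. So $L$ is a locally compact hyperbolic group whose Gromov boundary is $L$-equivariantly homeomorphic to $\partial X$, which is compact because $X$ is proper. Finally, $\Isom(X)_\xi$ is a closed subgroup of $L$, since it is the stabilizer of a point for a continuous action.

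\textbf{Key step.} I would prove that the action $L\curvearrowright \partial X$ is topologically amenable. This means there are weak-$*$ continuous maps $\mu_n\colon \partial X\to \mathrm{Prob}(L)$ satisfying
$$\sup_{\eta\in\partial X}\|g\mu_n(\eta)-\mu_n(g\eta)\|_1\to 0$$
uniformly for $g$ in compact subsets of $L$. This is the Adams/Kaimanovich theorem for hyperbolic spaces, and the plan for it is as follows.

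1. Use cocompactness and properness to replace $X$ by an $L$-quasi-isometric, locally finite, uniformly locally bounded graph $\Gamma$, for example the nerve of a cover by translates of a compact set. The conclusion of the key step can be checked on $\Gamma$.
2. For $\eta\in\partial\Gamma$ and $x\in\Gamma$, let $\nu_n(x,\eta)$ be the normalized counting measure on the vertices at distance roughly $n$ to $2n$ from $x$ that lie on quasi-geodesic rays from $x$ to $\eta$.
3. Thinness of triangles shows that $\|\nu_n(x,\eta)-\nu_n(y,\eta)\|_1=O(d(x,y)/n)$.
4. Fix a basepoint $o$. Transporting these measures to $L$ through the proper cocompact orbit map, and smoothing so that they depend continuously on $\eta$, gives the $\mu_n$.

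The consequence I then need is standard. Restrict the $\mu_n$ to the fixed point $\eta=\xi$ and let $g$ range over $\Isom(X)_\xi$. The condition becomes $\|g\mu_n(\xi)-\mu_n(\xi)\|_1\to0$ uniformly on compacta, so the $\mu_n(\xi)$ form a Reiter sequence for the stabilizer. Hence $\Isom(X)_\xi$ is amenable as a locally compact group.

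To match the definition of amenability used in this paper, which concerns discrete groups via invariant means on $L^\infty$, one further remark is needed. In the applications $G$ acts properly on $X$, so its image in $L$ is discrete with finite kernel. A closed subgroup of an amenable locally compact group is amenable. In particular, any discrete subgroup of $\Isom(X)_\xi$ (such as $\stab_G(\xi)$ modulo a finite kernel) is amenable in the sense of the definition given earlier, by Fact~\ref{subgroupamenable}(3).

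\textbf{Expected obstacle.} The main difficulty is the key step, specifically the estimate in item 3. It needs uniform control over the number of rays from $x$ to $\eta$, and this is exactly where properness (local finiteness of $\Gamma$) and cocompactness enter. I would first try Kaimanovich's horofunction-based construction: replace the rays by horoballs at $\xi$ and average over the sets $\{y: \beta_\xi(y)\in[n,2n],\ d(y,\text{ray})\le C\}$. This uses only $\delta$-hyperbolicity and bounded geometry. As a fallback, one could use the Busemann homomorphism $\beta_\xi$ (Lemma~\ref{Lem: BusemannHomo}). Then $\ker\beta_\xi$ consists of non-loxodromic elements, and one tries to show that this kernel is an increasing union of compact-by-contracted subgroups, as in Proposition~\ref{Prop: RegFocal}. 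This gives amenability of $\Isom(X)_\xi$ via the extension $\ker\beta_\xi\to\Isom(X)_\xi\to\R$.
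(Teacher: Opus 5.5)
Your route, which uses amenability of the boundary action $\Isom(X)\curvearrowright\partial X$ (Adams, Kaimanovich) and evaluates the almost-invariant measures at the fixed point $\xi$, is the standard argument behind \cite[Lemma 3.10]{CCMT15}; the paper quotes that lemma without proof, so your approach is essentially the same. You should cite that theorem rather than rely on your sketch of the key step: item 3 does not follow from thinness of triangles, because geodesic ray bundles from nearby basepoints are in general only Hausdorff-close rather than eventually equal, and Hausdorff-close supports give no $\ell^1$ control on normalized counting measures; also, $\mu_n(\xi)$ is a measure on $\Isom(X)$, not on $\Isom(X)_\xi$, so you still need the routine transfer (a Borel section of $\Isom(X)\to\Isom(X)_\xi\backslash\Isom(X)$, followed by smoothing with a compactly supported density) to get an actual Reiter sequence for the stabilizer.
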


\begin{lemma}\label{Lem: NoGTImpAme}
    Let $G$ be a finitely generated group which has property (PPH) on $X_1,\ldots, X_l$ and $H\le G$ be a finitely generated subgroup. If there is no general type action of $H$ on $X_i$ for $1\le i\le l$, then $H$ is amenable.
\end{lemma}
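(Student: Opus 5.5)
We prove this by induction on the number $l$ of factors, or more directly by splitting the indices according to the type of the action $H\curvearrowright X_i$. By Theorem~\ref{Thm: Classification}, since no action is of general type, each $H\curvearrowright X_i$ is elliptic, horocyclic, lineal, or focal. Let $I_{\mathrm{ell}}$ be the set of indices where the action is elliptic and $I=\{1,\dots,l\}\setminus I_{\mathrm{ell}}$. By Lemma~\ref{Lem: NoElliptic}, the diagonal action $H\curvearrowright\prod_{i\in I}X_i$ is still proper. If $I=\emptyset$, then $H$ is finite and hence amenable. So assume $I\neq\emptyset$.

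For each $i\in I$, the action $H\curvearrowright X_i$ has nonempty limit set. In the horocyclic and focal cases $H$ fixes a point $\xi_i\in\partial X_i$. In the lineal case $H$ preserves the two-point set $\Lambda(H)$, so a subgroup of index at most $2$ fixes a point $\xi_i$. Passing to the finite-index subgroup $H'=\bigcap_{i\in I}\stab_H(\xi_i)$ loses nothing, since amenability passes to finite-index overgroups by Fact~\ref{subgroupamenable}(3). Now consider the homomorphism
\[
\Psi\colon H'\to \prod_{i\in I}\Isom(X_i)_{\xi_i}.
\]
Each $X_i$ is a proper geodesic hyperbolic space with a cocompact isometry group, since $G$ already acts cocompactly. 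Hence Lemma~\ref{Lem: Amenable} shows that each $\Isom(X_i)_{\xi_i}$ is amenable.

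This is the step I expect to be the main obstacle. The target groups are topological, and Lemma~\ref{Lem: Amenable} presumably means amenability as a locally compact group, whereas we need amenability of the discrete group $\Psi(H')$. To get around this I would avoid the topological groups and argue via the structure of the action instead. By Lemma~\ref{Lem: ProperQLImpHomo}, applicable since $X_i$ is proper, there are homomorphisms $\phi_i\colon H'\to\R$, nonzero exactly on elements loxodromic on $X_i$. Let $N=\bigcap_{i\in I}\ker\phi_i$. Then $H'/N$ embeds in $\R^{|I|}$, so it is abelian and hence amenable. It remains to show that $N$ is amenable.

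Every element of $N$ is non-loxodromic on every $X_i$. I would then show that the kernel is locally finite-by-something amenable, using that $\ker\phi_i$ acts on $X_i$ without loxodromics, hence with a horocyclic or elliptic action. Then I would apply the confining structure of Proposition~\ref{Prop: RegFocal}: $[H',H']$ is an increasing union $\bigcup_n\alpha^{-n}(B(1,r_0)\cap[H',H'])$, and elements of bounded displacement form sets on which properness in the product can be exploited coordinatewise. Since the sub-direct-limit argument needs control in all factors simultaneously, this is genuinely delicate. The fallback is to accept Lemma~\ref{Lem: Amenable} as giving that discrete subgroups of $\Isom(X_i)_{\xi_i}$ fixing $\xi_i$ are amenable; this is how \cite{CCMT15} uses it, since the proof there is via the Busemann homomorphism and amenable kernel.

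With that fallback, the final step is as follows. The image of $H'$ in each $\Isom(X_i)_{\xi_i}$ is amenable, so $H'$ embeds, modulo the kernel $K=\ker\Psi$, into a product of amenable groups, and $H'/K$ is amenable. The kernel $K$ acts trivially on every $X_i$ with $i\in I$, so by properness of $H'\curvearrowright\prod_{i\in I}X_i$ it is finite. Fact~\ref{subgroupamenable}(3) then gives that $H'$, and therefore $H$, is amenable.
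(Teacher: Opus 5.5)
Your final argument is the paper's argument: discard the elliptic factors with Lemma~\ref{Lem: NoElliptic}, pass to a finite-index subgroup fixing a point $\xi_i\in\partial X_i$ in every remaining factor, map into $\prod_{i\in I}\Isom(X_i)_{\xi_i}$, invoke Lemma~\ref{Lem: Amenable}, and use properness to see that the kernel is finite. The Busemann/confining detour in your middle paragraph can be dropped. It would not work as sketched anyway, since Proposition~\ref{Prop: RegFocal} needs a cobounded focal action and $H$ is only a finitely generated subgroup of $G$.

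The obstacle you flagged is real, and your fallback does not remove it. The paper's write-up has the same issue, since it also declares each projection $H_i$ amenable by Lemma~\ref{Lem: Amenable}.

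\begin{itemize}
\item \textbf{What the lemma actually gives.} Lemma~\ref{Lem: Amenable} gives amenability of $\Isom(X_i)_{\xi_i}$ as a locally compact group. That property is inherited only by closed subgroups; a non-closed subgroup can contain $F_2$, as $F_2\le SO(3)$ shows.
\item \textbf{Why the fallback does not apply.} Your fallback concerns discrete subgroups, but you apply it to the image of $H'$ in a single factor. That image need not be discrete, because $H'$ acts properly only on the product. For instance, $BS(1,n)$ acts faithfully on its Bass--Serre tree $T_n$ with infinite vertex stabilizers, so its image in $\Isom(T_n)$ is not discrete. So the sentence ``the image of $H'$ in each $\Isom(X_i)_{\xi_i}$ is amenable'' is not justified.
\end{itemize}

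The fix is simply not to project to the factors.

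\begin{itemize}
\item $L=\prod_{i\in I}\Isom(X_i)_{\xi_i}$ is an amenable locally compact group, being a finite product of such groups.
\item $\Psi(H')$ is discrete in $L$. Fix basepoints $o_i\in X_i$. The set $\{g=(g_i)_{i\in I}\in L:\sum_{i\in I}d(o_i,g_io_i)<1\}$ is an open identity neighborhood, because evaluation maps are continuous. By properness of $H'\curvearrowright\prod_{i\in I}X_i$ it meets $\Psi(H')$ in only finitely many elements, and since $L$ is Hausdorff this gives discreteness.
\item Discrete subgroups are closed, and closed subgroups of amenable locally compact groups are amenable. For a discrete group this is ordinary amenability, so $\Psi(H')$ is amenable.
\item Since $\ker\Psi$ is finite, $H'$ and hence $H$ are amenable.
\end{itemize}
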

\begin{proof}
    By Theorem~\ref{Thm: Classification}, the action of $H$ on each $X_i$ must be one of the following types: elliptic, horocyclic, lineal or focal. 

    If $H$ acts elliptically on each $X_i$ for $1\le i\le l$, then the proper action $H\curvearrowright \prod_{i=1}^lX_i$ implies that $H$ is a finite group, which is amenable. If $H\curvearrowright X_i$ is lineal for all $1\le i\le l$, then Lemma~\ref{Lem: AllQuaLine} shows that $H$ is virtually abelian, which is also amenable. 
    
    Now we suppose that there exist $1\le m\le n\le l$ such that $H\curvearrowright  X_i$ is either horocyclic or focal for $1\le i\le m$, $H\curvearrowright X_i$ is lineal for $m+1\le i\le n$ and $H\curvearrowright X_i$ is elliptic for $n+1\le i\le l$.
    Let $H^\prime\le H$ be a subgroup of finite index such that $H^\prime\curvearrowright X_i$ is orientable lineal for $m+1\le i\le n$.

    By Lemma~\ref{Lem: NoElliptic}, the diagonal action $H\curvearrowright \prod_{i=1}^nX_i$ is still proper. This proper diagonal action gives a homomorphism $\rho: H\to \Isom(\prod_{i=1}^nX_i)$ which has finite kernel and factors through $\prod_{i=1}^n\Isom(X_i)$. Denote by $H_i$ the image of projection from $\rho(H)$ to $\Isom(X_i)$ for $1\le i\le n$. In particular, let $H_i^\prime$ be the image of projection from $\rho(H^\prime)$ to $\Isom(X_i)$ for $m+1\le i\le n$.
    Clearly, $[H_i:H_i^\prime]\le 2$ and thus $\rho(H)$ is virtually a subgroup of $\prod_{i=1}^mH_i\times \prod_{i=m+1}^nH_i^\prime$.
    
    By Lemma~\ref{Lem: Amenable}, $H_i(1\le i\le m)$ and $H_i^\prime(m+1\le i\le n)$ are amenable. As a result of Fact~\ref{subgroupamenable}, their direct product $\prod_{i=1}^mH_i\times \prod_{i=m+1}^nH_i^\prime$ and thus $\rho(H)$ and $H$ are all amenable. 
\end{proof}

We are now in  a position to complete the proof of Theorem~\ref{IntroThm: TA}.

\begin{theorem}\label{MainThm}
    Let $G$ be a finitely generated group which has property (PPH) on $X_1,\ldots, X_l$ and $H\le G$ be a finitely generated subgroup. Then either $H$ is amenable or $H$ contains $F_2$. Moreover, if there is no horocyclic action of $H$ on some $X_i$ for $1\le i\le l$, for example when $H=G$, then the following holds:
    \begin{enumerate}
        \item\label{Li} $H$ is virtually abelian of rank $\le l$ $\Longleftrightarrow$ for all $1\le i\le l$, $H\curvearrowright X_i$ is either elliptic or lineal.
        \item\label{GT} $H$ contains $F_2$ $\Longleftrightarrow$ there exists $1\le i\le l$ such that $H\curvearrowright X_i$ is of general type.
        \item\label{QP} $H$ is amenable but not virtually abelian $\Longleftrightarrow$ there is no general type action of $H$ on some $X_i$ and there exist $1\le i<j\le l$ such that $H\curvearrowright X_i$ and $H\curvearrowright X_j$ are regular focal and in addition, by assuming $\xi$ (resp. $\eta$) is the unique fixed point of $\partial X_i$ (resp. $\partial X_j$) and $\beta_{\xi}$ (resp. $\beta_{\eta}$) is the corresponding Busemann homomorphism, there exists $\alpha\in H- (\ker(\beta_{\xi})\cup \ker(\beta_{\eta}))$ such that $\xi$ (resp. $\eta$) is the attracting (resp. repelling) point of $\alpha$ in $\partial X_i$ (resp. $\partial X_j$). 
    \end{enumerate}
\end{theorem}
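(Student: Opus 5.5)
The dichotomy comes straight from Lemma~\ref{Lem: NoGTImpAme} and Fact~\ref{Fact: GTandQP}(1). By Theorem~\ref{Thm: Classification}, each action $H\curvearrowright X_i$ is of one of the five types. If some action is of general type, then $H$ contains $F_2$. Otherwise, Lemma~\ref{Lem: NoGTImpAme} shows that $H$ is amenable. Since amenable groups contain no $F_2$ (Fact~\ref{subgroupamenable}), the two alternatives are exclusive. This already proves (\ref{GT}): the forward direction holds because an amenable $H$ cannot contain $F_2$, and the converse is Fact~\ref{Fact: GTandQP}(1).

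For the refined statements I assume no action of $H$ on any $X_i$ is horocyclic. For (\ref{Li}), the backward direction is Lemma~\ref{Lem: AllQuaLine}, which gives rank $\le m\le l$. For the forward direction, suppose $H$ is virtually abelian. A general type action would produce $F_2$, so none occurs. A focal action would give exponential growth by Fact~\ref{Fact: GTandQP}(2), which is impossible for a virtually abelian group. Horocyclic actions are excluded by hypothesis, so every action is elliptic or lineal.

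For (\ref{QP}), suppose $H$ is amenable but not virtually abelian. Then there is no general type action, and by (\ref{Li}) at least one action, say on $X_i$, is focal; it is regular focal by Lemma~\ref{Lem: BusemannHomo}, since $X_i$ is proper. The task is to produce a second focal action on some $X_j$ together with the element $\alpha$. Let $\xi$ be the fixed point of $H$ in $\partial X_i$. I would apply Proposition~\ref{Prop: RegFocal} to $H\curvearrowright X_i$, choosing $\alpha$ with $\beta_\xi(\alpha)\neq 0$ and $\alpha^+=\xi$ (replace $\alpha$ by $\alpha^{-1}$ if needed). By Proposition~\ref{Prop: RegFocal}(2), $\alpha^n$ shrinks the ball $B(1,r)\cap[H,H]$ of $d_{x_i}$ into $B(1,r_0)\cap[H,H]$, and $[H,H]$ is infinite because $H$ is not virtually abelian (Lemma~\ref{Lem: FinCom}). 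Properness of the product action forces $\alpha^n$ conjugates of elements of $[H,H]$ to have unbounded displacement in some other factor $X_j$. Pigeonholing over $n$ and the finitely many factors, fix such a $j$. There $[H,H]$ cannot act with bounded orbits. Since $[H,H]$ lies in the kernel of the Busemann homomorphism and of the lineal homomorphisms, the action on $X_j$ cannot be lineal either. Hence it is focal, with fixed point $\eta$. Under conjugation by $\alpha^{n}$, displacement in $X_j$ must grow, which I would translate into $\alpha$ expanding horospheres about $\eta$, i.e.\ $\beta_\eta(\alpha)\neq 0$ with $\eta=\alpha^-$. This last translation is where I expect the main difficulty: comparing the $d_{x_j}$-growth of $\alpha^{n}g\alpha^{-n}$ with the sign of $\beta_\eta(\alpha)$ via Theorem~\ref{Thm: RegFocal}, and making the pigeonhole uniform. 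My first attempt would use the quasi-isometry $H_0=[H,H]\rtimes\langle\alpha\rangle\to(H,d_{x_j})$ and the estimate $d(g x,x)\approx 2\log$-type growth for focal actions.

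For the converse of (\ref{QP}), amenability follows from the absence of general type actions. If $H$ were virtually abelian, (\ref{Li}) would leave no focal action at all, contradicting the existence of the focal actions on $X_i$ and $X_j$. So $H$ is amenable but not virtually abelian.
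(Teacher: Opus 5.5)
Your treatment of the dichotomy, of items (1) and (2), and of the converse in (3) is correct and is essentially the paper's. The gap is in the forward direction of (3), at the step you flag. It is structural, not a matter of sharper estimates.

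First, you choose $\alpha$ only so that $\beta_\xi(\alpha)\neq 0$ on the single focal factor $X_i$. Nothing prevents $\alpha\in\ker\beta_\eta$ on another focal factor, in which case $\alpha$ is not loxodromic there and the required conclusion fails for that $\alpha$. Second, even when $\alpha$ is loxodromic on $X_j$, properness only tells you that your infinite set of conjugates $\alpha^{p_n}g_n\alpha^{-p_n}$ is unbounded on some other factor. Because the $p_n$ were tuned to contract displacement in $X_i$ alone, that unboundedness may just be leftover size of the $g_n$ which larger positive powers of $\alpha$ would also contract. In other words, it is compatible with $\eta=\alpha^+$. So ``displacement must grow under conjugation'' has not been established, and no comparison through Theorem~\ref{Thm: RegFocal} can extract the sign of $\beta_\eta(\alpha)$ from it. Separately, $[H,H]$ must be replaced by $K=[H',H']$ for a finite-index $H'\le H$ on which the lineal actions are orientable. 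For the infinite dihedral group acting on $\R$, the commutator subgroup consists of translations, so Lemma~\ref{Lem: UniBdd} genuinely needs orientability.

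What is missing is a simultaneous choice of $\alpha$ together with a contradiction argument. Take $\alpha\in H'$ with $\beta_{\xi_k}(\alpha)\neq 0$ for every focal factor $X_k$ at once. The paper cites \cite[Lemma 4.2]{CW25}; alternatively, the $\beta_{\xi_k}$ factor through $(H')^{\mathrm{ab}}/\mathrm{torsion}\cong\Z^r$ as nonzero linear forms, and their product is a nonzero polynomial, which cannot vanish on all of $\Z^r$. Then on each focal factor $\xi_k\in\{\alpha^+,\alpha^-\}$; normalize so that $\xi_1=\alpha^+$.

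Suppose every $\xi_k$ were $\alpha^+$. Proposition~\ref{Prop: RegFocal}(2), applied on all focal factors, shows that for each $g\in K$ the conjugates $\alpha^pg\alpha^{-p}$ eventually lie in $S=\{h\in K: d(x_k,hx_k)\le r_0^{(k)}\text{ for all focal }k\}$. By Lemma~\ref{Lem: UniBdd}, $S$ is also bounded on the lineal factors, and it is trivially bounded on the elliptic ones, so $S$ is finite by properness. Hence two of these conjugates coincide, i.e.\ some $\alpha^q$ with $q>0$ commutes with $g$. Then $g=\alpha^{qs}g\alpha^{-qs}\in S$ for large $s$. Thus $K\subseteq S$ is finite, and $H$ is virtually abelian by Lemma~\ref{Lem: FinCom}, a contradiction. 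Therefore some focal $\xi_k$ equals $\alpha^-$, which gives both the second focal factor and the required $\alpha$.

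The paper reaches this in two steps: first $m\ge 2$, then an iterated factor-by-factor contraction. Its essential inputs are exactly these two ingredients, the simultaneous $\alpha$ and the ``all attracting gives an infinite set bounded in every factor'' contradiction, and your proposal has neither.
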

\begin{proof}
    It follows from the first item in Fact~\ref{Fact: GTandQP} and Lemma~\ref{Lem: NoGTImpAme} that $H$ is either amenable or contains $F_2$.
    
    (\ref{Li}): ``$\Rightarrow$'' follows from the former two items in Fact~\ref{Fact: GTandQP}  and ``$\Leftarrow$'' follows from Lemma~\ref{Lem: AllQuaLine}. 

    (\ref{GT}): ``$\Rightarrow$'' follows from Lemma~\ref{Lem: NoGTImpAme} and ``$\Leftarrow$'' follows from the first item in Fact~\ref{Fact: GTandQP}.

    (\ref{QP}): ``$\Leftarrow$'' follows from Lemma~\ref{Lem: NoGTImpAme} and the second item in Fact~\ref{Fact: GTandQP}. It remains to verify ``$\Rightarrow$''.

    Let $H$ be an amenable group which is not virtually abelian. By Cases (\ref{Li}) and (\ref{GT}), there is no general type action of $H$ on some $X_i$ and there exists $1\le i\le l$ such that $H\curvearrowright X_i$ is focal. WLOG, we can assume that there exists $1\le m\le q\le l$ such that $H\curvearrowright X_i$ is regular focal for $1\le i\le m$, $H\curvearrowright X_i$ is lineal for $m+1\le i\le q$ and $H\curvearrowright X_i$ is elliptic for $q+1\le i\le l$. 

    \textbf{Step 1: we show $m\ge 2$.}

    Let $H^\prime\le H$ be a subgroup of finite index such that $H^\prime\curvearrowright X_i$ is lineal and orientable for $m+1\le i\le q$. Since $[H:H^\prime]<\infty$, $H^\prime$ is also non-elementary and $H^\prime\curvearrowright X_i$ is still regular focal for $1\le i\le m$. By Lemma~\ref{Lem: FinCom}, the commutator subgroup $K=[H^\prime,H^\prime]$ is infinite. By assumption, the diagonal action $H^\prime\curvearrowright \prod_{i=1}^lX_i$ is proper. Hence, $K$ has an unbounded orbit on some $X_i$. Since Lemma~\ref{Lem: UniBdd} shows that $K$ has a bounded orbit on $X_i$ for $m+1\le i\le l$, we can assume that $K$ has an unbounded orbit on $X_1$.
    
    For $1\le i\le m$, we let $\xi_i$ be the unique fixed point of the boundary $\partial X_i$ and $\beta_{\xi_i}$ be the corresponding Busemann homomorphism. By \cite[Lemma 4.2]{CW25}, there exists $\alpha\in G$ such that $\beta_{\xi_i}(\alpha)\neq 0$ for all $1\le i\le m$. WLOG, upon replacing $\alpha$ by $\alpha^{-1}$, we can assume $\xi_1$ is the attracting point of $\alpha$ on $\partial X_1$. Fix basepoints $x_1\in X_1, \ldots, x_m\in X_m$. By Proposition~\ref{Prop: RegFocal}, there exist $r_0>0$ and a subset $A_1=\{h\in K: d(x_1,hx_1)\le r_0\}$ such that $\alpha$ is confining $K$ into $A_1$. 

    Since $K$ has an unbounded orbit on $X_1$, there exists a sequence of group elements $\{g_n\}_{n\in \N}\subseteq K$ such that $d(x_1,g_nx_1)\ge n$ for any $n\in \N$. Since $\alpha$ is confining $K$ into $A_1$, by definition, for each $n\in \N$, there exists $k_n\in \N$ such that $\alpha^k(g_n)\in A_1$ for all $k\ge k_n$.

    \begin{claim}
        There exists a sequence of integers $\{p_n\}_{n\in \N}$ such that $p_n\ge k_n$ holds and $\{h_n=\alpha^{p_n}(g_n): n\in \N\}$ is an infinite subset of $A_1$.
    \end{claim}
    \begin{proof}[Proof of Claim]
        Let $N$ be an integer larger than $r_0$. First, we pick $p_0=k_0,\cdots,p_{N}=k_{N}$. Suppose we have picked $p_0,\cdots,p_{r-1}$ for $r\ge N+1$. We claim that there exists $p_{r}\ge k_r$ such that $h_r=\alpha^{p_r}(g_r)\notin \{h_0=\alpha^{p_0}(g_0),\cdots,h_{r-1}=\alpha^{p_{r-1}}(g_{r-1})\}$. Otherwise $\{\alpha^k(g_r):k\ge k_r\}$ is a finite set. This implies that there exist $k_r\le i< j$ such that $\alpha^i(g_r)=\alpha^j(g_r)$, which amounts to saying that $\alpha^{j-i}$ commutes with $g_r$. Then one has $\alpha^{k_r(j-i)}(g_r)=g_r\in A_1$. However, by choice of $\{g_n\}_{n\in \N}$, $d(x_1,g_rx_1)\ge r\ge N+1>r_0$, which contradicts with $g_r\in A_1$. Therefore, there exists $p_{r}\ge k_r$ such that $h_r=\alpha^{p_r}(g_r)\notin \{h_0,\cdots,h_{r-1}\}$. By repeating this procedure, we complete the proof.
    \end{proof}

    Denote $B_1=\{h_n\}_{n\in \N}$ which is given by the above Claim. Since $B_1\subset A_1$, it has a bounded orbit on $X_1$. It follows from the simple equality $g[a,b]g^{-1}=[gag^{-1},gbg^{-1}]$ and $g_n\in K$ that $h_n=\alpha^{p_n}(g_n)\in K$. By Lemma~\ref{Lem: UniBdd}, $B_1$ also has a bounded orbit on $X_i$ for $m+1\le i\le l$. This forces $m\ge 2$, otherwise one gets a contradiction to the properness of the product action $H^\prime\curvearrowright \prod_{i=1}^lX_i$ since $B_1$ is infinite.

    \textbf{Step 2: we show that there exists $2\le i\le m$ such that $\xi_i$ is the repelling point of $\alpha$ on $\partial X_i$.}
    
    Suppose to the contrary that for all $2\le i\le m$, $\xi_i$ is the attracting point of $\alpha$ on $\partial X_i$. Let $j\in\{2,3,\ldots,m\}$ be the least integer such that $B_1$ has an unbounded orbit on $X_j$. Then the same arguments as in \textbf{Step 1} show that there exists an infinite subset $B_2=\{h_n^\prime=\alpha^{p_n^\prime}(h_n): p_n^\prime>0, n\in \N\}\subseteq K$ such that $B_2$ has a bounded orbit on $X_j$. Since $p_n^\prime>0$ and $\xi_i$ is the attracting point of $\alpha$ on $\partial X_i$ for $1\le i\le m$, it follows from the construction of $B_2$ that $B_2$ has a bounded orbit on $X_i$ for $1\le i\le j$. By repeating this procedure, one finally gets an infinite subset $B^\prime\subseteq K$ which has a bounded orbit on $X_i$ for $1\le i\le m$. 
    By Lemma~\ref{Lem: UniBdd}, $B^\prime$ also has a bounded orbit on $X_i$ for $m+1\le i\le l$. This also contradicts with the properness of the product action $H^\prime\curvearrowright \prod_{i=1}^lX_i$ and then the conclusion follows.
\end{proof}

It is easy to see that $\Z^l(l\ge 2)$ and $F_n(n\ge 2)$ have property (PPH) and by taking $H$ to be the whole group, they satisfy Cases (\ref{Li}) and (\ref{GT}) in Theorem~\ref{MainThm} respectively. To obtain an amenable but not virtually abelian group which has property (PPH), we consider the kind of solvable Baumslag-Solitar groups $BS(1,n)$. 

Recall that $$BS(1,n)=\left\langle a,t\mid tat^{-1}=a^n\right\rangle.$$ 

\begin{lemma}\label{Lem: BSHasPPH}
    $BS(1,n)$ has property (PPH) for $n\ge 1$.
\end{lemma}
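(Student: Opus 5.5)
For $n\ge 2$ we will exhibit two proper hyperbolic spaces with cocompact $BS(1,n)$-actions whose diagonal action is proper: the Bass–Serre tree $T_n$ and the hyperbolic plane $\mathbb{H}^2$. The case $n=1$ is separate, since $BS(1,1)\cong\Z^2$. There we take $X_1=X_2=\R$, with $a$ translating the first factor and $t$ translating the second. Both actions are cocompact, and the diagonal action on $\R^2$ with the $\ell^1$-metric is the standard proper action of $\Z^2$.

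For $n\ge 2$ we use the representation $BS(1,n)\cong \Z[1/n]\rtimes_n \Z$, with $a\mapsto (1,0)$ and $t\mapsto (0,1)$. Here $t$ acts on $\Z[1/n]$ by multiplication by $n$.

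\emph{The tree.} Let $X_1$ be the Bass–Serre tree of the HNN extension. It is the $(n+1)$-regular tree, hence proper and $0$-hyperbolic, and $BS(1,n)$ acts on it with one orbit of vertices and one orbit of edges, so cocompactly. The vertex stabilizers are the conjugates of $\langle a\rangle$.

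\emph{The hyperbolic plane.} Let $X_2=\mathbb{H}^2$ in the upper half-plane model, with $a$ acting as $z\mapsto z+1$ and $t$ as $z\mapsto nz$. The relation $tat^{-1}=a^n$ holds, so this is an action. It is not cocompact on all of $\mathbb{H}^2$. To repair this we replace $X_2$ by a horoball-free orbit model: the Cayley-type graph of $BS(1,n)$ with respect to the horocyclic structure, i.e.\ $\mathbb{H}^2$ itself, but we check cocompactness directly. A fundamental domain for $\langle a,t\rangle$ acting on $\mathbb{H}^2$ requires $\Im z\in[1,n]$ and $\Re z$ modulo $\Z[1/n]$. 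Since $\Z[1/n]$ is dense in $\R$, points with $\Re z\in[0,1]$ and $\Im z\in[1,n]$ cover everything, which is compact. So the action is cocompact, although it is not properly discontinuous.

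\emph{Properness of the diagonal action.} Fix $x_1$ the vertex stabilized by $\langle a\rangle$ and $x_2=i$. Let $g=(q,k)$ with $d(x_1,gx_1)+d(x_2,gx_2)\le r$. The Busemann homomorphism of $\mathbb{H}^2$ at $\infty$ is $k\log n$, so $|k|$ is bounded by the $X_2$-distance. The $X_2$-bound also bounds $|q|$, because $g\cdot i = n^k i + q$. The $X_1$-bound bounds the denominator of $q$: along the height direction of the tree, $q$ fixes exactly the vertices at level $\ge -v$, where $n^v$ is the denominator of $q$. Hence $d(x_1,gx_1)\ge 2v-O(|k|)$. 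So $k$ is bounded, $q$ lies in $n^{-v}\Z$ with bounded $v$, and $|q|$ is bounded, leaving finitely many possibilities. The main technical step is this tree estimate, which we verify via the explicit identification of the vertices of $T_n$ with cosets $g\langle a\rangle$, i.e.\ pairs $(\Z[1/n]/n^{k}\Z,\ k)$.
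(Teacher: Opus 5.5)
Your proposal is correct and uses the same spaces as the paper: $\mathbb{H}^2$ and the Bass--Serre tree $T_n$ for $n\ge 2$, and the standard $\Z^2$ case for $n=1$. The paper simply cites \cite[Theorem 3.5]{AR24} for cocompactness and \cite[Proposition 5.2]{FSY04} for properness of the diagonal action. You instead verify both by hand: the height bound gives $|k|$, the $\mathbb{H}^2$ distance gives $|q|$, and the tree estimate $d(x_1,gx_1)\ge 2v-|k|$ bounds the $n$-adic denominator of $q$. That is a valid self-contained argument.

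Delete the stray claim that the action on $\mathbb{H}^2$ is not cocompact, together with the proposed ``repair''. Your own covering argument shows the action is cocompact: translates of $[0,1]\times[1,n]$ already cover $\mathbb{H}^2$ using one power of $t$ and one integer translation, so density of $\Z[1/n]$ is not needed.
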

\begin{proof}
    If $n=1$, then $BS(1,n)\cong \Z^2$ and thus has property (PPH). For $n\geq2$, it is well-known (cf. \cite[Theorem 3.5]{AR24}) that there are two natural cocompact actions $BS(1,n)\curvearrowright \mathbb H^2$ and $BS(1,n)\curvearrowright T_n$ where $T_n$ is the corresponding Bass-Serre tree of $BS(1,n)$. Moreover, it is shown in \cite[Proposition 5.2]{FSY04} that the diagonal action of $BS(1,n)$ on  $\mathbb H^2\times T_n$ is proper. Hence, $BS(1,n)$ has property (PPH).
\end{proof}

\begin{remark}
    As a result of Theorem \ref{MainThm}, for $n\ge 2$, the above two actions $BS(1,n)\curvearrowright \mathbb H^2$ and $BS(1,n)\curvearrowright T_n$ must be focal. Note that the group element $t$ acts as a loxodromic isometry on both $\mathbb H^2$ and $T_n$. It is also not hard to find that the attracting point of $t$ in $\partial \mathbb H^2$ and the repelling point of $t$ in $\partial T_n$ are corresponding fixed points of the two focal actions.
\end{remark}

\subsection{Tits Alternative in groups with property (PPT)}

Recall that a group $G$ has property (PPT) if there exist finitely many proper quasi-trees $X_1,\ldots, X_l$ on which $G$ acts cocompactly such that the induced diagonal action $G\curvearrowright \prod_{i=1}^lX_i$ is proper. At first, we cite a simple fact here, which is just a corollary of \cite[Proposition 3.1]{Man06} and Theorem \ref{Thm: Classification} (2).
\begin{lemma}\label{Lem: NoHoro}
    A finitely generated group can not admit a horocyclic action on a quasi-tree.
\end{lemma}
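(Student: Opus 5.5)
The plan is to combine \cite[Proposition 3.1]{Man06} with the characterization of horocyclic actions in Theorem~\ref{Thm: Classification}(2). Suppose a finitely generated group $H$, generated by a finite set $S$, acts isometrically on a quasi-tree $X$, and suppose the action is horocyclic. We derive a contradiction by showing that $H$ must contain a loxodromic element or have bounded orbits.

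First, fix a basepoint $x\in X$ and consider the finitely many displacements $d(x,sx)$ for $s\in S$. Manning's result \cite[Proposition 3.1]{Man06} states that if a group acts on a quasi-tree by isometries, with no element acting loxodromically, then every finitely generated subgroup has bounded orbits. Heuristically, each generator moves $x$ a bounded amount. Elliptic elements of a quasi-tree have quasi-fixed sets that behave like subtrees, and pairwise-intersecting subtrees in a tree have a common point (Helly property). The quasi-tree version of this makes the coarse fixed sets of the generators have a common coarse point, giving a bounded orbit.

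Second, apply Theorem~\ref{Thm: Classification}(2). By that theorem, a horocyclic action has unbounded orbits and contains no loxodromic element. The absence of loxodromics lets us invoke the previous step and conclude that the orbit $Hx$ is bounded. This contradicts unboundedness, so no horocyclic action exists.

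The main obstacle is checking that the hypotheses of Manning's proposition match exactly. I expect it to be phrased as follows: an isometric action of a finitely generated group on a quasi-tree with no loxodromic elements has bounded orbits. If instead it is phrased in terms of the bottleneck property, one would first pass through Manning's characterization of quasi-trees via the bottleneck property, and then run the Helly-type argument for coarse fixed sets. The main subtlety there is that the required coarseness constants must be uniform in the bottleneck constant and independent of the group element.
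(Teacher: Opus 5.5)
Your proposal is correct and is essentially the paper's own argument: the paper states this lemma as an immediate corollary of \cite[Proposition 3.1]{Man06} (a finitely generated group acting on a quasi-tree with no loxodromic element has bounded orbits) combined with Theorem~\ref{Thm: Classification}(2), which says a horocyclic action has unbounded orbits and no loxodromic elements. Your Helly-type heuristic and the remarks on the bottleneck property go beyond what the paper does, which is simply to cite the result.
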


Now, we are in a position to complete the proof of Theorem \ref{IntroThm: TA PPT}. Recall that a group is locally finite if any finitely generated subgroup is finite.
\begin{proposition}\label{Prop: NoPara}
    Let $G$ be a finitely generated group with property (PPT) and $H\le G$ be a finitely generated subgroup. Then either $H$ is virtually (locally finite)-by-$\Z^n$ or $H$ contains $F_2$.
\end{proposition}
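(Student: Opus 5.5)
By Theorem~\ref{MainThm}, $H$ either contains $F_2$ (when some $H\curvearrowright X_i$ is of general type) or is amenable. So assume no action $H\curvearrowright X_i$ is of general type. By Lemma~\ref{Lem: NoHoro}, each action is then elliptic, lineal or focal, and since $X_i$ is proper every focal action is regular focal. After reordering, let $H\curvearrowright X_i$ be focal for $1\le i\le m$, lineal for $m+1\le i\le q$, and elliptic for $q<i\le l$. If $m=0$, Lemma~\ref{Lem: AllQuaLine} makes $H$ virtually abelian, hence virtually $\{1\}$-by-$\Z^n$, and we are done. So assume $m\ge1$.

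Pass to a finite-index subgroup $H'$ on which every lineal action is orientable. By Lemma~\ref{Lem: ProperQLImpHomo} (with Lemma~\ref{Lem: BusemannHomo}) there are homomorphisms $\phi_1,\dots,\phi_q\colon H'\to\R$: the Busemann homomorphisms for $i\le m$, and the orientation homomorphisms for $m<i\le q$. Let $\Phi=(\phi_1,\dots,\phi_q)\colon H'\to\R^q$ and $N=\ker\Phi$. Since $H'$ is finitely generated, $\Phi(H')$ is a finitely generated torsion-free abelian group, i.e.\ $\Z^n$. It therefore suffices to show that $N$ is locally finite. Then $H'$ is (locally finite)-by-$\Z^n$, and $H$ is virtually so.

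Take a finitely generated $L\le N$. For $m<i\le q$, every element of $L$ is non-loxodromic on the quasi-line $X_i$, so, as in the proof of Lemma~\ref{Lem: UniBdd}, $L$ has bounded orbits there. The elliptic factors are bounded as well. For $i\le m$, $L\le\ker\beta_{\xi_i}$, so $L$ has no loxodromics on $X_i$. As $L$ is finitely generated and $X_i$ is a quasi-tree, Lemma~\ref{Lem: NoHoro} excludes a horocyclic action, and Theorem~\ref{Thm: Classification} then forces $L\curvearrowright X_i$ to be elliptic. Thus $L$ has a bounded orbit on every $X_i$, hence on $\prod X_i$, and properness gives that $L$ is finite. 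So $N$ is locally finite.

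The main obstacle is the step that rules out parabolic behaviour of the finitely generated subgroups of $N$ on the focal factors. This is exactly where the quasi-tree hypothesis, through Lemma~\ref{Lem: NoHoro}, is needed; without it, finitely generated subgroups of $\ker\beta$ could act horocyclically. One also has to be careful that $\ker\beta$ contains no loxodromics, which follows from Lemma~\ref{busemann homo q.m.} applied to $H'$, whose action fixes $\xi_i$. A final point to check is that $\Phi(H')$ is free abelian of finite rank, which holds because it is a finitely generated subgroup of $\R^q$.
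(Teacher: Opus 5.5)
Your argument is correct and follows essentially the paper's route. The Busemann homomorphisms rule out loxodromics in the kernel, Lemma~\ref{Lem: NoHoro} forces finitely generated subgroups of the kernel to act elliptically on the focal factors, and properness then makes them finite. The one difference is that you work with $N=\ker\Phi$ rather than $[H',H']$. This lets you skip the paper's torsion claim and its appeal to Lemma~\ref{Lem: AllQuaLine}, and it gives a $\Z^n$ quotient directly instead of a finitely generated abelian one.
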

\begin{proof}
    Suppose that $G$ has property (PPT) on $X_1,\ldots, X_l$. According to Theorem~\ref{MainThm}, it suffices for us to consider the case that there is no general type action of $H$ on some $X_i$ and there exists $1\le i\le l$ such that $H\curvearrowright X_i$ is either horocyclic or regular focal. WLOG, we assume that there exist $1\le m\le q\le l$ such that $H\curvearrowright X_i$ is either horocyclic or regular focal for $1\le i\le m$, $H\curvearrowright X_i$ is lineal for $m+1\le i\le q$ and $H\curvearrowright X_i$ is elliptic for $q+1\le i\le l$. Let $H^\prime\le H$ be a finite-index subgroup such that $H^\prime\curvearrowright X_i$ is orientable lineal for $m+1\le i\le q$.
    
    Denote $K=[H^\prime,H^\prime]$. If $K$ is finite, then the conclusion holds by Lemma \ref{Lem: FinCom}. Now, suppose that $K$ is infinite. For $1\le i\le m$, we let $\xi_i$ be the unique fixed point of $H$ on the boundary $\partial X_i$ and $\beta_{\xi_i}$ be the corresponding Busemann homomorphism. 

    \begin{claim}
        $K$ is an infinite torsion group.
    \end{claim}
    \begin{proof}[Proof of Claim]
        Suppose not. Let $g\in K$ be an element of infinite order. Note that $K\le \bigcap_{1\le i\le m} \ker(\beta_{\xi_i})$. By Lemma \ref{busemann homo q.m.}, $g$ can not be a loxodromic isometry on some $X_i$ for $1\le i\le m$. Since there is no parabolic isometries on a quasi-tree, $g$ is an elliptic isometry on each $X_i$ for $1\le i\le m$. Recall that $g\in K=[H^\prime,H^\prime]$. Together with Lemma~\ref{Lem: UniBdd}, we know that $\langle g\rangle$ has a bounded orbit on each $X_i$ for $1\le i\le l$. This is a contradiction to the properness of product action $H^\prime\curvearrowright \prod_{i=1}^lX_i$.
    \end{proof}

    Since $K\le \bigcap_{1\le i\le m} \ker(\beta_{\xi_i})$, $K$ can only admit horocyclic actions on each $X_i$ for $1\le i\le m$. Let $F\le K$ be a finitely generated subgroup. Lemma \ref{Lem: NoHoro} implies that $F$ can only act elliptically on each $X_i$ for $1\le i\le m$. Then by Lemma \ref{Lem: NoElliptic}, the induced action of $F$ on $\prod_{i=m+1}^lX_i$ is still proper. As a result of Lemma \ref{Lem: AllQuaLine}, $F$ is virtually abelian. Since $F\le K$ is a torsion group, $F$ must be a finite group. This shows that $K$ is locally finite. 

    Since $H'$ is finitely generated and $K$ is the commutator subgroup of $H'$, the above paragraph shows that $H'$ is (locally finite)-by-(finitely generated abelian). Therefore, $H$ is virtually (locally finite)-by-(free abelian). 
\end{proof}

To obtain a virtually (locally finite)-by-(free abelian) group with property (PPT), we consider the \textit{lamplighter group}\footnote{While this paper was nearing completion, Button \cite[\textsection 2.3]{But26} also proved in his recently posted preprint that the lamplighter group has property (PPT).} $$\mathbb{Z}_2\wr \mathbb{Z}=\bracket{a,t:a^2=1,\left[ t^iat^{-i},t^jat^{-j} \right]=1,\forall i,j\in\mathbb{Z}}.$$

Equivalently, $\mathbb{Z}_2\wr \mathbb{Z}$ is the semi-direct product $\left(\bigoplus_{\mathbb{Z}}\mathbb{Z}_2\right)\rtimes\mathbb{Z}$. Its elements can also be represented as the form $(f,n)$ where $f\colon \mathbb{Z}\rightarrow\mathbb{Z}_2$ is of compact support and $n\in\mathbb{Z}$. For any $(f,n),(g,m)\in\mathbb{Z}_2\wr\mathbb{Z}$, the multiplication is given by $(f,n)\cdot(g,m)=(h,n+m)$ where $h(k)=f(k)+g(n+k)$ for any $k\in\mathbb{Z}$. 
See \cite{Bal20} for more details about wreath products and lamplighter groups.

\begin{lemma}\label{Lem: LamplighterGp}
    $\mathbb{Z}_2\wr \mathbb{Z}$ has property (PPT).
\end{lemma}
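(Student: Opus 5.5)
We will realize $\mathbb{Z}_2\wr\mathbb{Z}$ as acting on two copies of the regular trivalent tree, following the Diestel--Leader picture. Recall that the Cayley graph of the lamplighter group with respect to the generators $t$ and $at$ is the Diestel--Leader graph $DL(2,2)$. This graph is the horocyclic product of two copies of the $3$-regular tree $T_3$:
\[
DL(2,2)=\{(x,y)\in T_3\times T_3:\ b_1(x)+b_2(y)=0\},
\]
where $b_1$ and $b_2$ are Busemann functions toward fixed ends $\omega_1\in\partial T_3$ and $\omega_2\in\partial T_3$. The plan is to take $X_1=X_2=T_3$ with the two induced actions obtained by projecting the action on $DL(2,2)$ to each factor. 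Each tree is a proper quasi-tree, so it suffices to check cocompactness of each factor action and properness of the diagonal action.

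\textbf{Step 1 (explicit actions).} Concretely, identify the vertices of the first tree with pairs $(n,[f]_{\ge n})$. Here $n\in\mathbb{Z}$ is the level, and $[f]_{\ge n}$ is the restriction of a finitely supported $f\colon\mathbb{Z}\to\mathbb{Z}_2$ to $[n,\infty)$. We join $(n,[f]_{\ge n})$ to $(n-1,[f']_{\ge n-1})$ whenever $f'$ restricts to $f$ on $[n,\infty)$. Each vertex then has one parent and two children, so this is $T_3$. The second tree is defined symmetrically using restrictions to $(-\infty,n)$. The element $(g,m)$ acts by $(n,[f])\mapsto (n+m,[g+f(\cdot-m)])$, up to the sign conventions of the multiplication rule. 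We will verify that this is a well-defined isometric action, using the formula $(f,n)(g,m)=(h,n+m)$. For cocompactness, the group acts transitively on vertices of each tree: $t$ shifts the level, and lamps at positions $\ge n$ let us reach any restriction. Hence each quotient is a single vertex with edges, which is compact.

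\textbf{Step 2 (properness).} Fix the base vertex $o_i=(0,[0])$ in each tree. Suppose $(g,m)$ moves $(o_1,o_2)$ by at most $r$ in the $\ell^1$-metric. Then $|m|\le r$, since the level changes by $m$ in each tree. In the first tree, the distance from $o_1$ to $(g,m)o_1$ is at least twice the distance from the level of the two vertices down to their common ancestor. That ancestor sits at a level no higher than $\min\supp(g)$ (suitably shifted). This forces $\min\supp(g)\ge -r-|m|$. Symmetrically, the second tree forces $\max\supp(g)\le r+|m|$. So $g$ is supported in $[-2r,2r]$, and only finitely many pairs $(g,m)$ qualify. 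This gives properness of the diagonal action, and hence the lemma.

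The main obstacle is bookkeeping. We must choose the orientation conventions, meaning the direction of the shift and whether to truncate to $[n,\infty)$ or $(n,\infty)$, so that the formula in Step 1 is compatible with the stated multiplication rule. Once these conventions are fixed, the support estimate in Step 2 is routine. If a direct verification becomes messy, an alternative is to cite that $\mathbb{Z}_2\wr\mathbb{Z}$ acts properly and cocompactly on $DL(2,2)$. The inclusion $DL(2,2)\hookrightarrow T_3\times T_3$ is $1$-Lipschitz and has finite-to-one preimages of bounded sets, because the coordinate pair determines the vertex. Properness of the product action then follows from properness on $DL(2,2)$.
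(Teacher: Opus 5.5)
Your proposal is correct and is essentially the paper's proof. Your level/half-line-restriction trees are exactly the Bass--Serre trees $T_1,T_2$ of the two ascending HNN structures $\bigl(\bigoplus_{i\ge 0}\mathbb{Z}_2\bigr)\ast_\sigma$ and $\bigl(\bigoplus_{i\le 0}\mathbb{Z}_2\bigr)\ast_\tau$, and your common-ancestor estimate is the coordinate form of the paper's normal-form computation $d(x_1,(f,n)x_1)=n+2k_R$ and $d(x_2,(f,n)x_2)=2k_L-n$, which likewise bounds $|n|$ and both ends of $\supp(f)$. There is one harmless orientation slip: with your own conventions the common ancestor of $o_1$ and $(g,m)o_1$ lies above them (at the least $N\ge\max\{0,m\}$ with $g|_{[N,\infty)}=0$), so the tree built from restrictions to $[n,\infty)$ bounds $\max\supp(g)$ and the other tree bounds $\min\supp(g)$; since each tree controls one end of the support, the conclusion is unchanged.
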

\begin{remark}
    In other context of this paper, the identity element of an abstract group is denoted by $1$. In this concrete example, we use $0,1$ to represent elements of $\mathbb{Z}_2$. We believe that this will  not bring about any ambiguity.
\end{remark}
\begin{proof}
    By \cite[Remark~3.13]{Bal20}, $\mathbb{Z}_2\wr \mathbb{Z}$ has an ascending HNN-extension structure: Let $A=\bigoplus_{i=0}^\infty \mathbb{Z}_2$ and $\sigma\colon \bigoplus_{i=0}^\infty \mathbb{Z}_2\rightarrow \bigoplus_{i=0}^\infty \mathbb{Z}_2$, $(x_0,x_1,x_2,\cdots)\mapsto (0,x_0,x_1,x_2,\cdots)$. Then 
    $$\mathbb{Z}_2\wr \mathbb{Z}=\left(\bigoplus_{i=0}^\infty \mathbb{Z}_2\right)\ast_\sigma=\bracket{A,t:tat^{-1}=\sigma(a),\forall a\in A}.$$ 
    Each element has the normal form $t^{-i}at^j$ where $i,j\geq 0,a\in A$ and if $i,j>0$, then $a=(1,a_1,a_2,\cdots)\in A\setminus \sigma(A)$.

    By the Bass-Serre theory, $\mathbb{Z}_2\wr\mathbb{Z}$ acts cocompactly on a regular Bass-Serre tree $T_1$ of degree $[A:A]+[A:\sigma(A)]=3$. See Figure~\ref{fig: lamplighter tree 1} for an illustration. For simplicity, we write a finite 0-1 sequence $\alpha$ to represent an infinite sequence $(\alpha,0,0,\cdots)$. 
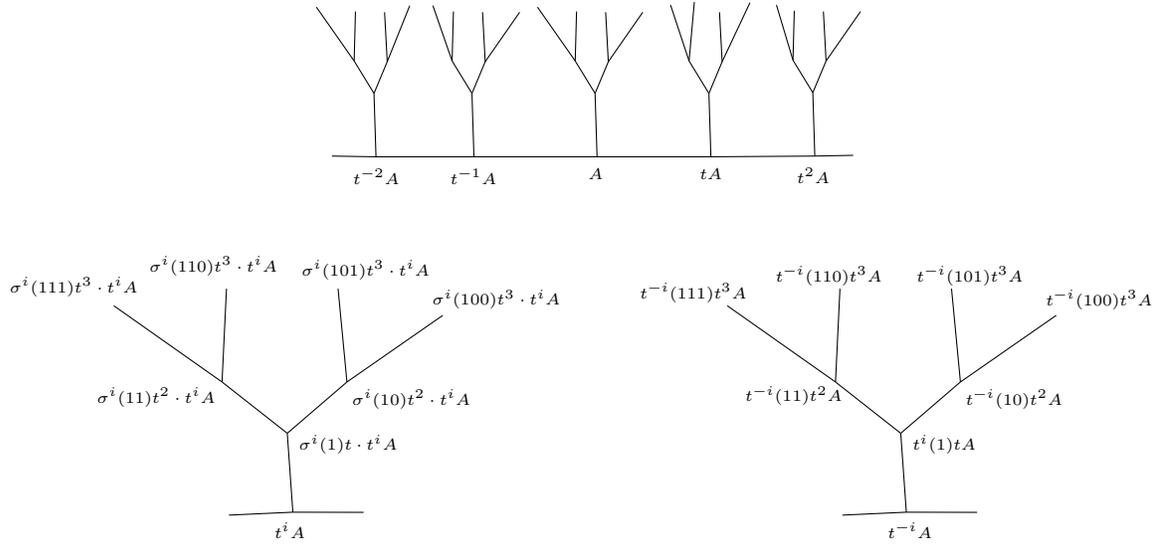
\begin{figure}[h]
    \centering
\begin{tikzpicture}[x=0.75pt,y=0.75pt,yscale=-1,xscale=1,scale=0.7]
\draw    (269.71,113.1) -- (301.42,113.63) -- (372,113.63) -- (460.7,113.63) -- (542.72,113.63) -- (617.59,113.1) -- (645.25,112.57) ;
\draw    (459.27,67.91) -- (460.7,113.63) ;
\draw    (444.96,44.53) -- (459.27,67.91) ;
\draw    (468.81,45.06) -- (459.27,67.91) ;
\draw    (417.78,5.72) -- (444.96,44.53) ;
\draw    (445.92,8.91) -- (444.96,44.53) ;
\draw    (466.9,9.44) -- (468.81,45.06) ;
\draw    (493.6,9.44) -- (468.81,45.06) ;
\draw    (370.57,67.91) -- (372,113.63) ;
\draw    (356.26,44.53) -- (370.57,67.91) ;
\draw    (380.11,45.06) -- (370.57,67.91) ;
\draw    (342.43,4.66) -- (356.26,44.53) ;
\draw    (357.22,8.91) -- (356.26,44.53) ;
\draw    (378.2,9.44) -- (380.11,45.06) ;
\draw    (404.9,9.44) -- (380.11,45.06) ;
\draw    (299.99,67.91) -- (301.42,113.63) ;
\draw    (285.68,44.53) -- (299.99,67.91) ;
\draw    (309.53,45.06) -- (299.99,67.91) ;
\draw    (258.5,5.72) -- (285.68,44.53) ;
\draw    (286.64,8.91) -- (285.68,44.53) ;
\draw    (307.62,9.44) -- (309.53,45.06) ;
\draw    (325.74,4.66) -- (309.53,45.06) ;
\draw    (541.29,67.91) -- (542.72,113.63) ;
\draw    (526.99,44.53) -- (541.29,67.91) ;
\draw    (550.83,45.06) -- (541.29,67.91) ;
\draw    (513.63,4.13) -- (526.99,44.53) ;
\draw    (530.8,2) -- (526.99,44.53) ;
\draw    (548.92,9.44) -- (550.83,45.06) ;
\draw    (570.86,2.53) -- (550.83,45.06) ;
\draw    (616.16,67.38) -- (617.59,113.1) ;
\draw    (601.86,43.99) -- (616.16,67.38) ;
\draw    (625.7,44.53) -- (616.16,67.38) ;
\draw    (589.94,4.13) -- (601.86,43.99) ;
\draw    (602.81,8.38) -- (601.86,43.99) ;
\draw    (623.79,8.91) -- (625.7,44.53) ;
\draw    (650.5,8.91) -- (625.7,44.53) ;
\draw    (237.5,313) -- (241.53,369.89) ;
\draw    (190.5,276) -- (237.5,313) ;
\draw    (280.5,276) -- (237.5,313) ;
\draw    (112.5,221) -- (190.5,276) ;
\draw    (193.71,208.74) -- (190.5,276) ;
\draw    (274.09,208.74) -- (280.5,276) ;
\draw    (349.5,228) -- (280.5,276) ;
\draw    (195.5,372) -- (241.53,369.89) -- (292.5,370) ;
\draw    (679.5,313) -- (683.53,369.89) ;
\draw    (632.5,276) -- (679.5,313) ;
\draw    (722.5,276) -- (679.5,313) ;
\draw    (554.5,221) -- (632.5,276) ;
\draw    (635.71,208.74) -- (632.5,276) ;
\draw    (716.09,208.74) -- (722.5,276) ;
\draw    (791.5,228) -- (722.5,276) ;
\draw    (637.5,372) -- (683.53,369.89) -- (734.5,370) ;
\draw (452.72,120) node [anchor=north west][inner sep=0.75pt]    {\tiny $A$};
\draw (532.96,120) node [anchor=north west][inner sep=0.75pt]    {\tiny $tA$};
\draw (602.88,120) node [anchor=north west][inner sep=0.75pt]    {\tiny $t^{2} A$};
\draw (353.28,120) node [anchor=north west][inner sep=0.75pt]    {\tiny $t^{-1} A$};
\draw (283.18,120) node [anchor=north west][inner sep=0.75pt]    {\tiny $t^{-2} A$};
\draw (226.43,375.36) node [anchor=north west][inner sep=0.75pt]    {\tiny $t^{i} A$};
\draw (244.27,312) node [anchor=north west][inner sep=0.75pt]    {\tiny $\sigma ^{i}( 1) t\cdot t^{i} A$};
\draw (98.78,277.84) node [anchor=north west][inner sep=0.75pt]    {\tiny $\sigma ^{i}( 11) t^{2} \cdot t^{i} A$};
\draw (282.5,279.4) node [anchor=north west][inner sep=0.75pt]    {\tiny $\sigma ^{i}( 10) t^{2} \cdot t^{i} A$};
\draw (35.89,198.44) node [anchor=north west][inner sep=0.75pt]    {\tiny $\sigma ^{i}( 111) t^{3} \cdot t^{i} A$};
\draw (136.54,183.46) node [anchor=north west][inner sep=0.75pt]    {\tiny $\sigma ^{i}( 110) t^{3} \cdot t^{i} A$};
\draw (246.19,186.49) node [anchor=north west][inner sep=0.75pt]    {\tiny $\sigma ^{i}( 101) t^{3} \cdot t^{i} A$};
\draw (340.32,207.48) node [anchor=north west][inner sep=0.75pt]    {\tiny $\sigma ^{i}( 100) t^{3} \cdot t^{i} A$};
\draw (668.43,375.36) node [anchor=north west][inner sep=0.75pt]    {\tiny $t^{-i} A$};
\draw (686.27,312) node [anchor=north west][inner sep=0.75pt]    {\tiny $t^{i}( 1) t A$};
\draw (565.78,276.84) node [anchor=north west][inner sep=0.75pt]    {\tiny $t^{-i}( 11) t^{2} A$};
\draw (724.5,279.4) node [anchor=north west][inner sep=0.75pt]    {\tiny $t^{-i}( 10) t^{2} A$};
\draw (489.89,202.44) node [anchor=north west][inner sep=0.75pt]    {\tiny $t^{-i}( 111) t^{3} A$};
\draw (587.54,190.46) node [anchor=north west][inner sep=0.75pt]    {\tiny $t^{-i}( 110) t^{3} A$};
\draw (689.19,190.49) node [anchor=north west][inner sep=0.75pt]    {\tiny $t^{-i}( 101) t^{3} A$};
\draw (782.32,207.48) node [anchor=north west][inner sep=0.75pt]    {\tiny $t^{-i}( 100) t^{3} A$};
\end{tikzpicture}
    \caption{three different local characterizations of $T_1$ ($i\geq 0$).}
    \label{fig: lamplighter tree 1}
\end{figure}

Similarly, $\mathbb{Z}_2\wr \mathbb{Z}$ has another ascending HNN-extension structure: Let $B=\bigoplus_{i=-\infty}^0 \mathbb{Z}_2$ and $\tau\colon \bigoplus_{i=-\infty}^0 \mathbb{Z}_2\rightarrow \bigoplus_{i=-\infty}^0 \mathbb{Z}_2$, $(\cdots,x_{-2},x_{-1},x_0)\mapsto (\cdots,x_{-2},x_{-1},x_0,0)$. Then 
    $$\mathbb{Z}_2\wr \mathbb{Z}=\left(\bigoplus_{i=-\infty}^0 \mathbb{Z}_2\right)\ast_\tau=\bracket{B,s:sbs^{-1}=\tau(b),\forall b\in B}.$$ 
    Each element has the normal form $s^{-i}bs^j$ where $i,j\geq 0,b\in B$ and if $i,j>0$, then $b=(\cdots,b_{-2},b_{-1},1)\in B\setminus \tau(B)$. By the Bass-Serre theory again, $\mathbb{Z}_2\wr\mathbb{Z}$ acts cocompactly on another regular Bass-Serre tree $T_2$ of degree $[B:B]+[B:\tau(B)]=3$. Explicitly, $T_2$ is similar to $T_1$ by replacing $t$ and $(x_0,x_1,x_2,\cdots)$ by $s$ and $(\cdots,x_2,x_1,x_0)$ respectively.

It suffices for us to show the induced diagonal action of $\mathbb{Z}_2\wr\mathbb{Z}$ on $T_1\times T_2$ is proper. 

First, we translate a group element $(f,n)\in \Z_2\wr\Z$ into its normal forms with respect to the two different ascending HNN-extension structures. After some calculations, when $\Z_2\wr\Z$ is regarded as $A\ast_{\sigma}$, $(f,n)$ corresponds to the normal form $t^{-k_R}a t^{m_R}$  where $k_R=\max\left\{ 0,-\min\supp(f),-n \right\}$, $m_R=n+k_R$ and the $j$-th coordinate of $a=(a_0,a_1,a_2,\cdots)$ satisfies that $a_j=f(j-k_R)$ for $j\geq 0$. 


When $\Z_2\wr\Z$ is viewed as $B\ast_{\tau}$, $(f,n)$ corresponds to the normal form $s^{-k_L}b s^{m_L}$ where $k_L=\max\left\{ 0,\max\supp(f),n \right\}$, $m_L=k_L-n$ and the $j$-th coordinate of $b=(\cdots,b_{-2},b_{-1},b_0)$ satisfies that $b_j=f(j+k_L)$ for $j\leq 0$.

Let $x_1\in T_1$ (resp. $x_2\in T_2$) be the vertex represented by the left coset $A$ (resp. $B$).

\begin{claim}
    For any $r>0$, the set $\{(f,n)\in\mathbb{Z}_2\wr\mathbb{Z}:d(x_1,(f,n)x_1)+d(x_2,(f,n)x_2)\leq r\}$ is finite.
\end{claim}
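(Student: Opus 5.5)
The plan is to compute, or at least bound from below, the distance $d(x_1,gx_1)$ in the Bass--Serre tree $T_1$ in terms of the normal form $g=t^{-k_R}at^{m_R}$. Then we show that a bound on $d(x_1,gx_1)+d(x_2,gx_2)$ controls $n$ and the support of $f$.

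\textbf{Step 1: the distance in $T_1$.} In the Bass--Serre tree of an ascending HNN-extension, the edge path from the vertex $A$ to $t^{-k}at^{m}A$ passes through $t^{-1}A,\ldots,t^{-k}A$, and then goes up through $k+m$ further edges. If the normal form is reduced, meaning that $a\notin\sigma(A)$ when $k,m>0$, then there is no backtracking. So
$$d(x_1,gx_1)=k_R+m_R=2k_R+n$$
in the reduced case. In every case $k_R+m_R\ge d(x_1,gx_1)$, and we also have the lower bound $d(x_1,gx_1)\ge |n|$. I will verify this using the three local pictures in Figure~\ref{fig: lamplighter tree 1}. Symmetrically, $d(x_2,gx_2)=k_L+m_L=2k_L-n$ for the reduced normal form in $T_2$.

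\textbf{Step 2: bounding $n$ and the support.} Suppose both distances are at most $r$. From $d(x_1,gx_1)=2k_R+n\le r$ and $k_R\ge -\min\supp(f)$, we get $-\min\supp(f)\le (r-n)/2$. From $2k_L-n\le r$ and $k_L\ge\max\supp(f)$, we get $\max\supp(f)\le (r+n)/2$. Since also $k_R\ge -n$ and $k_L\ge n$, we obtain $|n|\le r$. Hence $\supp(f)\subset[-r,r]$ and $|n|\le r$, which leaves at most $(2r+1)\cdot 2^{2r+1}$ possibilities. This proves the set is finite.

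\textbf{Main obstacle.} The delicate point is Step 1: making sure the normal forms given by $k_R=\max\{0,-\min\supp(f),-n\}$ are genuinely reduced, so that the geodesic length really equals $k_R+m_R$. If the normal form is reduced we have equality. If it is not reduced, then either $k_R=0$ or $m_R=0$. In that case the path $x_1\to t^{-k}x_1\to t^{-k}a t^m x_1$ is a path up or down a single branch, so again there is no cancellation. I will check directly that the edges traversed are distinct cosets, using the normal form description of vertices. Alternatively, it suffices to prove the weaker lower bound $d(x_1,gx_1)\ge k_R$, which already yields the support bounds needed in Step 2.
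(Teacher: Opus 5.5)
Your proposal is correct and follows essentially the same route as the paper. Both compute $d(x_1,gx_1)=k_R+m_R=2k_R+n$ and $d(x_2,gx_2)=k_L+m_L=2k_L-n$ from the two normal forms, then use $k_R\ge\max\{-\min\supp(f),-n\}$ and $k_L\ge\max\{\max\supp(f),n\}$ to get $|n|\le r$ and $\supp(f)\subset[-r,r]$. Your extra check of reducedness, which the paper takes for granted, holds: if $k_R,m_R>0$ then $k_R=-\min\supp(f)$, so $a_0=f(-k_R)=1$. The only slip is that the upward leg of your path has $m$ further edges, not $k+m$.
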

\begin{proof}[Proof of Claim]
    Note that $$d(x_1,(f,n)x_1)=d(x_1,t^{-k_R}at^{m_R}x_1)=k_R+m_R=n+2k_R\le r$$ and  $$d(x_2,(f,n)x_2)=d(x_2,s^{-k_L}bs^{m_L}x_2)=k_L+m_L=2k_L-n\le r.$$
Then it follows from
$$k_R=\max\left\{ 0,-\min\supp(f),-n \right\} \text{  and  } k_L=\max\left\{ 0,\max\supp(f),n \right\}$$
that $$-r\le n\le r$$ and thus $$-r\le \min\supp f\le \max\supp f\le r.$$
\end{proof}
By the above Claim, the diagonal action of $\Z_2\wr\Z$ on $T_1\times T_2$ is proper. Hence, $\Z_2\wr\Z$ has property (PPT).
\end{proof}

\subsection{Property (QT)}
Recall that a finitely generated group $G$ has property (QT) if it acts isometrically on a finite product $X=\prod_{i=1}^lT_i$ of quasi-trees so that for any finite generating set $F$ of $G$ and $o\in X$, the orbit map $\G(G,F)\to X, g\mapsto go$ is a quasi-isometric embedding with respect to the word metric and $\ell^1$-metric respectively. 

\begin{defn}
    A group $G$ has \textit{property (PT)} if there exist finitely many (possibly non-proper) quasi-trees $T_1,\cdots,T_l$ on which $G$ acts coboundedly such that the diagonal action $G\curvearrowright \prod_{i=1}^lT_i$ with $\ell^1$-metric is proper.
\end{defn}
Note that the only difference between property (PT) and (PPT) is that each quasi-tree should be proper in the definition of property (PPT).

\begin{lemma}\label{Lem: PT}
    If a finitely generated group $G$ has property (QT), then $G$ virtually has property (PT).
\end{lemma}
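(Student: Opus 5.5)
From property (QT), $G$ acts isometrically on $X=\prod_{i=1}^l T_i$, a finite $\ell^1$-product of quasi-trees, and every orbit map $\G(G,F)\to X$ is a quasi-isometric embedding. The plan is to modify this action in two ways: first make it diagonal, then make each factor action cobounded, while keeping properness.

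\textbf{Step 1: diagonal action on a finite-index subgroup.} In general the action of $G$ on the product may permute isometric factors. Since there are only finitely many factors, I would pass to the kernel $G_0$ of the induced homomorphism $G\to \mathrm{Sym}(l)$. This is a finite-index subgroup. Alternatively one can cite \cite[Theorem 6.1]{But25}, as the introduction does. After this step $G_0$ preserves each factor and acts diagonally on $\prod_i T_i$. In the standard construction of \cite{BBF21} the action is already diagonal, so this step may even be unnecessary.

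\textbf{Step 2: properness.} The orbit map of $G_0$ is still a quasi-isometric embedding, because $G_0$ is quasi-isometric to $G$. Hence for $o=(o_1,\dots,o_l)$ and $r>0$, the set $\{g\in G_0: d(o,go)\le r\}$ lies in a word-metric ball of bounded radius, which is finite. So the diagonal action is metrically proper.

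\textbf{Step 3: coboundedness.} Each factor action $G_0\curvearrowright T_i$ need not be cobounded. To fix this, replace $T_i$ by a $G_0$-invariant subspace on which $G_0$ acts coboundedly and which is still a quasi-tree. The natural first try is to take $T_i'$ to be the $R$-neighbourhood of the orbit $G_0o_i$, or the subgraph spanned by geodesics between orbit points. The catch is that an arbitrary subset of a quasi-tree is not geodesic. The remedy is to use quasiconvexity, or to take the Cayley-type graph $Y_i$ with vertex set $G_0o_i$, joining two points when their distance is at most a constant $C$.

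I expect this to be the main obstacle: $G_0o_i$ must be coarsely connected and quasi-isometrically embedded (quasiconvex), so that $Y_i$ is quasi-isometric to a quasiconvex subset of $T_i$ and hence a quasi-tree. Orbits in general actions need not be quasiconvex. What I would try first:

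1. Let $C$ exceed $\max_{s\in F} d(o_i,so_i)$. Then $Y_i$ is connected, since consecutive orbit points along a word path are $C$-close.
2. The inclusion $Y_i\to T_i$ is $1$-Lipschitz up to scale.
3. To see that $Y_i$ is a quasi-tree, embed it in the thickened hull of the orbit in $T_i$. The union of geodesics between orbit points in a quasi-tree is quasiconvex and hence a quasi-tree. Its $G_0$-action is cobounded because any point of such a geodesic lies near the orbit, by a Morse-type argument using that word geodesics map to $\ell^1$ quasi-geodesics. If that fails, I would replace $T_i$ by the quasiconvex hull itself, which is $G_0$-invariant, a quasi-tree, and cobounded.

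Properness transfers to the new product because distances only change by a uniform quasi-isometry controlled by the basepoint orbit. Combining Steps 1–3 gives that $G_0$ has property (PT).
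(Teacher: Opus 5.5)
Your outline matches the paper's: pass to a finite-index subgroup acting diagonally via \cite[Theorem 6.1]{But25}, get properness from the quasi-isometric embedding, then make each factor cobounded. But the one step with real content, coboundedness in Step 3, is not established.

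The ``Morse-type argument'' cannot work factor by factor. The orbit map to $\prod_i T_i$ is a quasi-isometric embedding, but its composition with the projection to a single $T_i$ is only coarsely Lipschitz, so images of word geodesics in $T_i$ can backtrack arbitrarily. For example, let $\Z^2=\langle a,b\rangle$ act on $\R\times\R$ with $a$ translating by $(1,1)$ and $b$ by $(1,-1)$. The orbit map is a quasi-isometry, yet the word geodesic $a^nb^{-n}$ projects in the first factor to the path $0,1,\dots,n,\dots,1,0$.

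Your fallback is circular. $G_0$ acts coboundedly on the hull of $G_0o_i$ exactly when every point of a geodesic between orbit points lies uniformly close to the orbit, i.e.\ when the orbit is quasi-convex, which is what you set out to prove. This is also false in general hyperbolic spaces: a parabolic $\Z$-orbit in $\mathbb H^2$ is not quasi-convex. So some input specific to quasi-trees is unavoidable.

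The missing idea is Manning's bottleneck property: in a quasi-tree there is $\Delta$ such that every path from $x$ to $y$ passes within $\Delta$ of every point of $[x,y]$. Apply it to the concatenation of the geodesics $[g_ko_i,g_{k+1}o_i]$, each of length at most $C$, along any word path from $g$ to $h$. Every point of $[go_i,ho_i]$ is then within $\Delta+C$ of $G_0o_i$. So the orbit is quasi-convex, and its thickened hull (or your graph $Y_i$, with $C$ large relative to $\Delta$) is a quasi-tree on which $G_0$ acts coboundedly. Properness transfers because the map back to $T_i$ is coarsely Lipschitz. This uses only finite generation and the quasi-tree hypothesis, not the quasi-isometric embedding; it is what the paper obtains by citing \cite[Remark 3.2]{Man06}.

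A smaller point: your primary route in Step 1, taking the kernel of $G\to\mathrm{Sym}(l)$, presupposes that every isometry of an $\ell^1$-product of quasi-trees permutes the factors. That is not automatic, and it is what \cite[Theorem 6.1]{But25} supplies, so the citation is needed rather than optional.
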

\begin{proof}
    Fix a finite generating set $F$ of $G$. By definition of property (QT), $G$ acts isometrically on a finite product $X=\prod_{i=1}^lT_i$ of quasi-trees so that the orbit map from $\G(G,F)$ to $X$ is a quasi-isometric embedding. It follows from the quasi-isometric embedding that the product action $G\curvearrowright X$ is proper. 
    
    By a result of Button \cite[Theorem 6.1]{But25}, there exists a finite-index subgroup $G'$ of $G$ such that the isometric product action $G'\curvearrowright X=\prod_{i=1}^lT_i$ is induced by a diagonal action, i.e. the homomorphism $G'\to \Isom(X)$ factors through $ \prod_{i=1}^l\Isom(T_i)$. Since $[G:G']<\infty$, $G'$ is also finitely generated. As a result of \cite[Remark 3.2]{Man06}, we can assume each action $G'\curvearrowright T_i$ is cobounded. This shows that $G'$ has property (PT).
\end{proof}

Since a group with finite virtual cohomological dimension must be virtually torsion free, the following result is a generalization of \cite[Proposition 2.11]{HNY25}.
\begin{proposition}\label{Prop: AmeToEle}
    Let $G$ be a finitely generated amenable group without infinite locally finite subgroups. Then $G$ has property (QT) if and only if it is virtually abelian.
\end{proposition}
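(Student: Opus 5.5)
The ``if'' direction is easy. If $G$ is virtually abelian, it contains a finite-index subgroup $\Z^n$. The group $\Z^n$ acts on $\prod_{i=1}^n\R$, with each factor $\Z$ translating its own line $\R$, and for the $\ell^1$-metric the orbit map is a quasi-isometry. The induced action of $G$ on a finite product of quasi-trees then gives property (QT); alternatively one can use that property (QT) passes to finite-index overgroups, cf.\ \cite{BBF21}.

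For the ``only if'' direction, suppose $G$ has property (QT). By Lemma \ref{Lem: PT}, some finite-index subgroup $G'$ has property (PT): it acts coboundedly on quasi-trees $T_1,\ldots,T_l$, possibly non-proper, with a proper diagonal action. Then $G'$ is finitely generated and amenable, so no action $G'\curvearrowright T_i$ is of general type, by Fact \ref{Fact: GTandQP}(1) and Fact \ref{subgroupamenable}(5). By Lemma \ref{Lem: NoHoro} none is horocyclic either, since each $T_i$ is a quasi-tree. Hence each action is elliptic, lineal, or focal. Since $G'$ is amenable, Lemma \ref{Lem: BusemannHomo} shows that every focal action is regular focal and that each orientable lineal action has a Busemann homomorphism. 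Properness of $T_i$ is not needed here.

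Next I rerun the argument of Proposition \ref{Prop: NoPara} with properness of the spaces replaced by amenability. Pass to a further finite-index subgroup $H'$ on which all lineal actions are orientable, and set $K=[H',H']$. If $K$ is finite, Lemma \ref{Lem: FinCom} shows $H'$, hence $G$, is virtually abelian.

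So suppose $K$ is infinite. \begin{itemize}
\item $K$ is torsion: an element of infinite order in $K$ lies in the kernels of all Busemann homomorphisms. Lemma \ref{busemann homo q.m.} and the absence of parabolics on quasi-trees make it elliptic on the focal factors, and Lemma \ref{Lem: UniBdd} (valid under amenability) makes it bounded on the lineal factors. This contradicts properness.
\item $K$ is locally finite: a finitely generated $F\le K$ has no loxodromics on the focal factors and cannot be horocyclic there, so it acts elliptically on them. Lemmas \ref{Lem: NoElliptic} and \ref{Lem: AllQuaLine} then make $F$ virtually abelian, hence finite because it is torsion.
\end{itemize}
Thus $K$ is an infinite locally finite subgroup of $G$, contradicting the hypothesis.

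The main obstacle is that Lemma \ref{Lem: AllQuaLine} and its supporting lemmas are stated under property (PPH), i.e.\ with proper cocompact spaces. I must check that their proofs use only two things: the Busemann homomorphisms, which Lemmas \ref{Lem: ProperQLImpHomo} and \ref{Lem: UniBdd} already supply under amenability, and the bounded-displacement estimate \cite[Proposition 10.6.4]{CDP90} on quasi-lines, which needs no properness. I also need Lemma \ref{Lem: NoHoro}, which is stated for arbitrary quasi-trees and so applies to non-proper ones. With these checks the argument above transfers verbatim.
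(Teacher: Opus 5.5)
Your proposal is correct and follows the paper's proof. You pass via Lemma \ref{Lem: PT} to a finite-index subgroup with property (PT) and rerun the argument of Proposition \ref{Prop: NoPara}, with amenability replacing properness (for the Busemann homomorphisms and Lemma \ref{Lem: UniBdd}), and you use commensurability invariance of (QT) for the converse; you just spell out explicitly, ending with $[H',H']$ being either finite or an infinite locally finite subgroup, what the paper only sketches.
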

\begin{proof}
    ``$\Leftarrow$'' is clear since property (QT) is a commensurability invariant (cf. \cite{BBF21}). We only need to prove ``$\Rightarrow$''. Let $G$ be a finitely generated amenable group without infinite locally finite subgroups. Suppose that $G$ has property (QT). By Lemma \ref{Lem: PT}, $G$ contains a finite-index subgroup $G'$ that has property (PT). Since $G$ is finitely generated and amenable, so is $G'$. Proposition \ref{Prop: NoPara} shows that any finitely generated amenable group which has property (PPT) and contains no infinite locally finite subgroups must be virtually abelian. We remark that in this paper, the assumption that each quasi-tree is proper is only used to satisfy the conditions of Lemmas \ref{Lem: BusemannHomo}, \ref{Lem: UniBdd} and \ref{Lem: Amenable}. However, these lemmas automatically hold for amenable groups. Therefore, the same proof strategy of Proposition \ref{Prop: NoPara} shows that $G'$ is virtually abelian. Then the conclusion follows since $[G:G']<\infty$. 
\end{proof}

\section{From proper quasi-trees to regular trees}\label{sec: PPT}

In this section, we construct the connection between property (PPT) and proper actions on finite products of regular trees.


\begin{lemma}\label{Lem: InfStab}
    Let $G$ be a group acting isometrically on a connected locally finite graph $X$. If the action is not proper, then $\stab_G(x)$ is infinite for every $x\in X$.
\end{lemma}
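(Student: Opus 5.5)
We argue by contrapositive: if $\stab_G(x)$ is finite for some $x\in X$, then the action is proper. The setting is a connected locally finite graph, with isometries preserving the graph structure. Vertices are the natural objects, but $x$ may lie in the interior of an edge, so we first reduce to a vertex.

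\textbf{Reduction to a vertex.} Suppose $x$ lies in the interior of an edge $e$ with endpoints $u,v$. Then $\stab_G(u)\cap\stab_G(v)$ fixes $e$ pointwise, so it lies in $\stab_G(x)$. Moreover, $\stab_G(u)$ permutes the finitely many edges at $u$, so $\stab_G(u)\cap\stab_G(e)$ has finite index in $\stab_G(u)$. Inside $\stab_G(e)$, the pointwise stabilizer of $e$ has index at most $2$. Hence $\stab_G(x)$ is finite if and only if $\stab_G(u)$ is finite. So it suffices to treat the case where $x$ is a vertex.

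\textbf{Stabilizers of different vertices are commensurable.} Let $v,w$ be adjacent vertices. The group $\stab_G(v)$ acts on the finite set of neighbours of $v$, so the orbit of $w$ under it is finite. By orbit--stabilizer, $\stab_G(v)\cap\stab_G(w)$ has finite index in $\stab_G(v)$. By symmetry it also has finite index in $\stab_G(w)$. Since $X$ is connected, we can induct along a path, so finiteness of $\stab_G(v)$ propagates to every vertex, and by the reduction above to every point of $X$.

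\textbf{Properness.} Fix a vertex $x$ with $\stab_G(x)$ finite and fix $r>0$. If $d(x,gx)\le r$, then $gx$ is a vertex in the ball $B(x,r)$. This ball contains only finitely many vertices because $X$ is connected and locally finite. The elements sending $x$ to a fixed vertex $y$ form a coset $g_0\stab_G(x)$, which is finite. So $\{g: d(x,gx)\le r\}$ is a finite union of finite sets.

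For a general basepoint $z$, pick a vertex $x$ with $d(x,z)\le 1$. Then $d(z,gz)\le r$ implies $d(x,gx)\le r+2$, so the set $\{g: d(z,gz)\le r\}$ is also finite. Hence the action is proper, proving the contrapositive.

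The only delicate point is the bookkeeping for non-vertex points in the first step. Everything else is orbit--stabilizer combined with local finiteness.
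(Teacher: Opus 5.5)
Your proof is correct and is essentially the paper's argument run as a contrapositive. The paper takes an infinite set $A$ with bounded orbit $A\cdot x$, uses local finiteness to see that $A\cdot x$ is finite, and pigeonholes to find an infinite subset inside a single coset $g\,\stab_G(x)$; this is exactly your count of the set $\{g: d(x,gx)\le r\}$ as a finite union of cosets of a finite stabilizer. The paper works directly at an arbitrary point $x$, since an orbit of a simplicial action meets each edge in at most two points, so your reduction to vertices and the commensurability step are correct but unnecessary. Your explicit assumption that $G$ acts by graph automorphisms is also implicitly needed in the paper's proof.
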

\begin{proof}
    Since the action $G\curvearrowright X$ is not proper, there exists an infinite subset $A\subseteq G$ such that the orbit $A\cdot x$ is bounded. Since $X$ is locally finite, the bounded orbit $A\cdot x$ consists of finite points. By the pigeonhole principle, there exists an infinite subset $B\subseteq A$ such that the orbit $B\cdot x$ consists of only one point. Pick any $g\in B$. It is easy to see that $g^{-1}B$ is an infinite subset of $G$ which lies in $\stab_G(x)$.
\end{proof}

Let $G$ be a hyperbolic group. An element $g\in G$ is called \textit{hyperbolic} if it is of infinite order. A hyperbolic group is \textit{non-elementary} if it is not virtually cyclic. For convenience, we collect some basic facts about hyperbolic groups here. For their proofs and more details about hyperbolic groups, we refer the readers to \cite{DK18, Gro87, Loh17}. 
\begin{fact}
    \begin{enumerate}
        \item A hyperbolic group does not contain an infinite torsion subgroup.
        \item A finitely generated subgroup of a non-elementary hyperbolic group is either virtually cyclic or contains $F_2$.
        \item A hyperbolic group does not contain a Baumslag-Solitar subgroup $BS(m,n)=\langle a,t\mid ta^mt^{-1}=a^n\rangle$.
    \end{enumerate}
\end{fact}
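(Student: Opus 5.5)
The three items are classical, and I would derive all of them from the action of $G$ on a Cayley graph $X=\G(G,F)$. This action is proper and cocompact, and $X$ is a proper Gromov-hyperbolic space. For a subgroup $H\le G$, Theorem~\ref{Thm: Classification} splits the induced action $H\curvearrowright X$ into five cases, and properness prunes them:

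\begin{itemize}
\item An elliptic action forces $H$ to be finite.
\item A focal action is impossible by Fact~\ref{Fact: GTandQP}(3).
\item A lineal action is proper on a quasi-line (the orbit), so $H$ is virtually cyclic.
\item A general type action gives $F_2\le H$ by Fact~\ref{Fact: GTandQP}(1).
\end{itemize}

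What remains is the horocyclic case. Here I would show that a subgroup acting properly on $X$ cannot be horocyclic when $G$ acts cocompactly; this is the key step.

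\textbf{Key step.} Suppose $H$ is horocyclic with fixed point $\xi$. I would try the following first.
\begin{itemize}
\item Since $G\curvearrowright X$ is cocompact, $\xi$ is a conical limit point for $G$. So along a geodesic ray $\gamma$ to $\xi$ there are $g_n\in G$ with $g_n x_0$ within bounded distance of $\gamma(n)$.
\item Every $h\in H$ has translation length $0$ and fixes $\xi$. By thinness of triangles, $h$ moves points of $\gamma$ far out by a uniformly bounded amount, depending only on $d(x_0,hx_0)$ and $\delta$.
\item Take distinct $h_1,\dots,h_N\in H$. For $n$ large, the elements $g_n^{-1}h_ig_n$ all move $x_0$ by at most a constant $C=C(\delta)$ independent of $N$.
\item Properness bounds the number of such elements by a constant depending only on $C$. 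Hence $|H|$ is bounded, which contradicts $H$ being infinite.
\end{itemize}
The delicate point is making the displacement bound uniform, i.e. independent of $h$ once $n$ is large. This is the main obstacle, and I would handle it with the Busemann function $\beta_\xi$ together with Lemma~\ref{busemann homo q.m.}. For a horocyclic action there are no loxodromics, so $\beta_\xi$ vanishes on $H$ and each $h$ is approximately horospherical. Alternatively, I would cite Gromov \cite{Gro87}.

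\textbf{Items (1) and (2).} Item (1) follows from this dichotomy: a torsion subgroup has no loxodromic element, so its action is elliptic or horocyclic. The horocyclic case is excluded, so the subgroup is finite. Item (2) is the case analysis above applied to a finitely generated $H$.

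\textbf{Item (3).} Suppose $BS(m,n)\le G$; then $a$ has infinite order and so is loxodromic with stable translation length $\tau(a)>0$. The relation $ta^mt^{-1}=a^n$ and conjugation invariance of $\tau$ give $|m|\tau(a)=|n|\tau(a)$, so $|m|=|n|$. In that case $t^2$ commutes with $a^m$. By Britton's lemma, $\langle a^m,t^2\rangle\cong\Z^2$ inside $BS(m,\pm m)$. However, $\Z^2$ is neither virtually cyclic nor contains $F_2$, which contradicts item (2).
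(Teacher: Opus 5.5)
Your proposal is correct, but it does not follow the paper: the paper gives no argument for this Fact and only refers to \cite{DK18, Gro87, Loh17}, where the three items are classical results with separate proofs. You instead run everything through Theorem~\ref{Thm: Classification} applied to $H\curvearrowright\G(G,F)$. Properness makes elliptic subgroups finite and lineal ones virtually cyclic, and rules out focal ones by Fact~\ref{Fact: GTandQP}(3). So the only genuine geometric input is that a group acting properly and cocompactly has no horocyclic subgroups. Item (1) is then exactly that lemma, and item (3) reduces to (2) via translation lengths. This gives a self-contained treatment in the paper's own language. It also proves slightly more: every subgroup of any hyperbolic group, finitely generated or not, is finite, virtually cyclic, or contains $F_2$. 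The price is that you must do the horocyclic estimate by hand.

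When you do that estimate, note that $\beta_\xi\equiv 0$ on $H$ is not yet a uniform bound, since it only controls the homogenization. What you need is the following chain:
\begin{enumerate}
\item The map $h\mapsto h_\xi(x_0,hx_0)$, for a fixed horokernel $h_\xi$, is a quasimorphism on $\stab_G(\xi)$ with defect $D=D(\delta)$. This holds because horokernels based at the same point differ by $O(\delta)$.
\item A quasimorphism lies within its defect of its homogenization, so $|h_\xi(x_0,hx_0)|\le D$ for all $h\in H$.
\item Hence the rays $\gamma$ and $h\gamma$ eventually fellow-travel after a shift of size at most $D+O(\delta)$. This gives $d(\gamma(t),h\gamma(t))\le C(\delta)$ for $t\ge T(h)$.
\end{enumerate}
The dependence of $T(h)$ on $h$ is harmless because you only use finitely many $h_i$ at a time.

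For (1) you do not even need Lemma~\ref{busemann homo q.m.}, since homogeneity alone kills $\beta_\xi$ on torsion elements. Also, $\beta_\xi$ is defined on $\stab_G(\xi)$ with $G$ finitely generated, so $H$ itself need not be finitely generated. In (3), make the implicit hypothesis $mn\neq 0$ explicit. Otherwise $a$ need not have infinite order, and the claim fails: $BS(0,0)=F_2$. Finally, your appeal to (2) in (3) does not actually use that $G$ is non-elementary, which is fine because (3) is stated for all hyperbolic groups.
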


\begin{lemma}\label{Lem: StabF2}
    Let $G$ be a non-elementary hyperbolic group acting isometrically on a connected locally finite graph $X$. If the action is not proper, then $\stab_G(x)$ contains $F_2$ for every $x\in X$.
\end{lemma}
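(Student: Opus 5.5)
By Lemma~\ref{Lem: InfStab}, since the action $G\curvearrowright X$ is not proper, the stabilizer $\stab_G(x)$ is infinite for every $x\in X$. The plan is to show that this infinite stabilizer cannot be virtually cyclic. The Fact on hyperbolic groups then gives $F_2$, with one caveat: item (2) there is stated for finitely generated subgroups, so we must either work with finitely generated subgroups of $\stab_G(x)$ or upgrade the argument. Concretely, write $S=\stab_G(x)$. If $S$ contained no $F_2$, then by item (2) every finitely generated subgroup of $S$ would be finite or virtually cyclic. We aim for a contradiction.

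The first key step is to show that $S$ contains an element of infinite order. Suppose instead that $S$ is torsion. Every finitely generated subgroup of $S$ is then a torsion group that is finite or virtually cyclic, hence finite, so $S$ is an infinite locally finite (torsion) subgroup of $G$. This contradicts item (1) of the Fact. So pick $h\in S$ of infinite order, i.e.\ a hyperbolic element.

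The second step, which is the main one, uses non-elementarity and the local finiteness of $X$ to produce a second, independent infinite-order element in a vertex stabilizer. Since $G$ is non-elementary, there is $g\in G$ with $g h g^{-1}$ and $h$ generating a non-virtually-cyclic subgroup; for instance, take $g$ with $g\{h^{\pm\infty}\}\cap\{h^{\pm\infty}\}=\emptyset$ on $\partial G$. The element $ghg^{-1}$ fixes $gx$ rather than $x$, so we pass to finite index. Since $X$ is locally finite and connected, the ball of radius $d(x,gx)$ around $x$ is finite and $S$-invariant. Hence the pointwise stabilizer of that ball has finite index in $S$, and $\stab(x)\cap\stab(gx)$ has finite index in both $\stab(x)$ and $\stab(gx)=gSg^{-1}$. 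Consequently some power $(ghg^{-1})^k$ lies in $S$, and some power $h^j$ also lies in $\stab(x)\cap\stab(gx)$. Then $\langle h^j,(ghg^{-1})^k\rangle\le S$ is finitely generated, and it is not virtually cyclic because the fixed-point pairs of its two generators on $\partial G$ are disjoint. By item (2) of the Fact, it contains $F_2$. (Alternatively, ping-pong with high powers gives $F_2$ directly.)

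The final step is to transfer the conclusion from one vertex to every vertex $y$. This is immediate: the argument above works for any vertex, or one can note that $\stab(x)\cap\stab(y)$ has finite index in $\stab(x)$ by the same local-finiteness argument and still contains high powers of the two independent elements. I expect the main obstacle to be the finite-index/commensurability step between $\stab(x)$ and $\stab(gx)$. I would handle it as described, by using finiteness of balls in the locally finite graph, so that $\stab(x)$ acts on the finite sphere containing $gx$ and therefore a finite-index subgroup fixes $gx$.
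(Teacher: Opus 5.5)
Your proposal is correct and follows essentially the same route as the paper: Lemma~\ref{Lem: InfStab} makes the stabilizer infinite, and the absence of infinite torsion subgroups gives an infinite-order element in it. You then conjugate that element by some $g$ with $g\{h^{\pm}\}\cap\{h^{\pm}\}=\emptyset$, use local finiteness to get a power of the conjugate fixing $x$, and conclude that the two elements generate a non-elementary, hence $F_2$-containing, subgroup of $\stab_G(x)$. Your commensurability of $\stab_G(x)$ and $\stab_G(gx)$ via finite balls repackages the paper's ``a power fixes the edges at $hx$, a further power fixes the vertices at distance $2$, \dots'' argument, and your explicit choice of conjugator spells out the independence of the two generators, which the paper's phrasing leaves implicit.
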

\begin{proof}
    Fix a basepoint $x\in X$. By Lemma \ref{Lem: InfStab}, $\stab_G(x)$ is an infinite subgroup of $G$. Since a hyperbolic group does not contain an infinite torsion subgroup, there exists a hyperbolic element $g\in \stab_G(x)$. Since $G$ is non-elementary hyperbolic, there exists another hyperbolic element $h\in G$ independent of $g$ such that $F_2\le \langle g,hg^nh^{-1}\rangle$ for any $n\in \N$. Note that $g\in \stab_G(x)$ and $hgh^{-1}\in \stab_G(hx)$. Since $X$ is locally finite, a power of $hgh^{-1}$ will fix the edges incident at $hx$, then a further power of that will fix the vertices distance 2 away from $hx$ and so on. Consequently for some large $n$ we get that $hg^nh^{-1}$ also fixes $x$ and so $\langle g,hg^nh^{-1}\rangle \le \stab_G(x)$. Therefore, we conclude that $F_2\le \stab_G(x)$.
\end{proof}

\begin{lemma}\label{Lem: NoLine}
    Let $G$ be a non-elementary hyperbolic group. Suppose that $G$ admits a proper diagonal action on the $\ell^1$-product $X\times \prod_{i=1}^lL_i$ where $X$ is a connected locally finite graph and each $L_i$ is a quasi-line. Then the induced action of $G$ on $X$ is still proper.
\end{lemma}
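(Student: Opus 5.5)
We argue by contradiction: we assume $G\curvearrowright X$ is not proper and use the vertex stabilizer to contradict properness of the product action. Fix a vertex $x\in X$. By Lemma~\ref{Lem: StabF2}, $H:=\stab_G(x)$ contains a non-abelian free subgroup. Since $H$ fixes $x$, its orbit in $X$ is bounded. By Lemma~\ref{Lem: NoElliptic}, the diagonal action of $H$ on $\prod_{i=1}^l L_i$ is then proper. So it suffices to show that no group containing $F_2$ acts properly on a finite $\ell^1$-product of quasi-lines. Passing to a subgroup, we may assume $H=F_2$; this is finitely generated, and it still acts properly on $\prod L_i$.

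We then examine the action $F_2\curvearrowright L_i$ for each $i$. Since $L_i$ is a quasi-line, $\partial L_i$ has two points, so the limit set has at most two points. The action is therefore elliptic, horocyclic or lineal. A horocyclic action on a quasi-line is impossible: the orbit of any subgroup is bounded or a quasi-line, as used in the proof of Lemma~\ref{Lem: UniBdd}. (Lemma~\ref{Lem: NoHoro} also applies directly, since a quasi-line is a quasi-tree.) So each action is elliptic or lineal. If $L_i$ is proper, Lemma~\ref{Lem: AllQuaLine} essentially applies and makes $F_2$ virtually abelian, a contradiction.

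The main obstacle is that the $L_i$ need not be proper, and $F_2$ is not amenable, so we cannot invoke Lemma~\ref{Lem: UniBdd} verbatim. I would instead argue directly for lineal actions on quasi-lines, where the Busemann function is a genuine quasi-morphism. After passing to a finite-index subgroup $F'\le F_2$ (still free non-abelian and finitely generated), each lineal action is orientable. Fixing an end $\xi_i$ gives the Busemann quasimorphism $\beta_i\colon F'\to\R$ of Proposition~\ref{Prop: BusemannQM}. By Lemma~\ref{busemann homo q.m.}, $\beta_i(g)\neq0$ exactly when $g$ is loxodromic. On a quasi-line, $d(o_i,go_i)$ is coarsely $|\beta_i(g)|$ up to a uniform constant, by the argument cited from \cite[Proposition 10.6.4]{CDP90} in the proof of Lemma~\ref{Lem: AllQuaLine}. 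The step needing care is this uniform displacement bound for non-homogeneous elements; I would use the quasi-isometry of $L_i$ with $\R$ and the fact that an orientation-preserving isometry of a quasi-line coarsely acts as a translation.

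With that in hand, we need infinitely many elements of $F'$ on which every $\beta_i$ is bounded. I would take commutators: for $a,b\in F'$ generating a free group, the elements $[a^n,b]$ all satisfy $|\beta_i([a^n,b])|\le C$, where $C$ depends only on the defect of the homogeneous quasimorphism $\beta_i$. Hence $d(o_i,[a^n,b]o_i)$ is uniformly bounded for every lineal factor. On the elliptic factors displacements are bounded anyway. The elements $[a^n,b]$ are pairwise distinct in a free group, so we obtain infinitely many elements with uniformly bounded displacement in $\prod L_i$, contradicting properness. Therefore $G\curvearrowright X$ is proper.
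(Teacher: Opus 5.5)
Your proof is correct. Up to the reduction it is the paper's argument: argue by contradiction, get $F_2\le\stab_G(x)$ from Lemma~\ref{Lem: StabF2}, and use Lemma~\ref{Lem: NoElliptic} to make the stabilizer act properly on $\prod_{i=1}^l L_i$.

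The last step is where you genuinely differ. The paper simply invokes Lemma~\ref{Lem: AllQuaLine} to conclude that $\stab_G(x)$ is virtually abelian. That lemma runs through Lemma~\ref{Lem: UniBdd}: the Busemann functions are homomorphisms, so all of $[H,H]$ has bounded orbits and is therefore finite. It then applies Lemma~\ref{Lem: FinCom}. So it tacitly needs two things:
\begin{enumerate}
\item a finitely generated subgroup;
\item Busemann functions that are homomorphisms, i.e.\ amenability or properness of the $L_i$.
\end{enumerate}
Both are harmless in the application to Theorem~\ref{MainThm0}, where the $L_j$ are real lines, once one passes to a finitely generated subgroup.

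You instead pass to a finitely generated free subgroup and use only that a homogeneous quasimorphism is conjugation-invariant, hence bounded by its defect on every single commutator. So the infinite family $[a^n,b]$ has uniformly bounded displacement on every lineal factor. This proves the lemma in the generality stated, for arbitrary, possibly non-proper, quasi-lines. The paper's route really does need its extra hypotheses there. On a non-proper quasi-line a non-amenable group can have a Busemann quasimorphism that is not a homomorphism, e.g.\ $F_2$ acting on the quasi-line built from a Brooks counting quasimorphism, and then $[H,H]$ need not have bounded orbits. The price is that you only get ``no $F_2$'' rather than ``virtually abelian of rank at most the number of lineal factors''. That is all the lemma requires.

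For the step you flagged, do not use an arbitrary quasi-isometry $L_i\to\R$. It only conjugates $g$ to a uniform quasi-isometry of $\R$, not to a coarse translation. Use the horokernel $h_i$ as the coordinate instead. On a quasi-line, a geodesic from $y$ towards $\xi_i$ passes uniformly close to any point $x$ lying coarsely between them, so $d(x,y)=|h_i(x,y)|+O(1)$. Moreover $g\mapsto h_i(o_i,go_i)$ is a quasimorphism within bounded distance of its homogenization $\beta_i$. Together these give $d(o_i,go_i)\le|\beta_i(g)|+C_i$ uniformly in $g$, which is exactly what your commutator argument needs.
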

\begin{proof}
    Suppose to the contrary that the action $G\curvearrowright X$ is not proper. By Lemma \ref{Lem: StabF2}, $\stab_G(x)$ contains $F_2$ for every $x\in X$. As a subgroup of $G$, $\stab_G(x)$ acts also properly on $X\times \prod_{i=1}^lL_i$. By Lemma \ref{Lem: NoElliptic}, $\stab_G(x)$ acts by a proper diagonal action on $\prod_{i=1}^lL_i$. Since any group action on a quasi-line is either elliptic or lineal, Lemma \ref{Lem: AllQuaLine} shows that $\stab_G(x)$ is virtually abelian. This is a contradiction to $F_2\le \stab_G(x)$.
\end{proof}

\begin{lemma}\label{Lem: NoHorocyclic}
    Let $G$ be a non-elementary hyperbolic group admitting a proper diagonal action on the $\ell^1$-product $X\times Y$ where $X$ is a connected locally finite graph and $Y$ is a proper Gromov-hyperbolic space having a cocompact isometry group. Suppose that the action $G\curvearrowright Y$ is not of general type. Then the induced diagonal action $G\curvearrowright X$ is still proper.
\end{lemma}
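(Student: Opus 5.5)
Following the pattern of Lemma~\ref{Lem: NoLine}, I will argue by contradiction. Suppose the induced action $G\curvearrowright X$ is not proper. By Lemma~\ref{Lem: StabF2}, for a fixed vertex $x\in X$ the stabilizer $S=\stab_G(x)$ contains $F_2$. Since $S$ fixes $x$, it has a bounded orbit in $X$. The restriction of the proper diagonal action $G\curvearrowright X\times Y$ to $S$ is still proper, so Lemma~\ref{Lem: NoElliptic} shows that $S\curvearrowright Y$ is proper. The rest of the argument shows that this proper action forces the relevant subgroup to be amenable, which contradicts $F_2\le S$.

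I will use the hypothesis that $G\curvearrowright Y$ is not of general type. By Theorem~\ref{Thm: Classification}, the action $G\curvearrowright Y$ is elliptic, horocyclic, lineal or focal. I split into two cases.

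\emph{Elliptic case.} Here $S$ has a bounded orbit in $Y$, and its action on $Y$ is proper. Hence $S$ is finite, which contradicts $F_2\le S$.

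\emph{Non-elliptic case.} The limit set $\Lambda(G)\subset \partial Y$ has one or two points, or $G$ fixes a point of $\partial Y$. In all cases a subgroup $G_0\le G$ of index at most $2$ fixes a point $\xi\in\partial Y$:
\begin{itemize}
\item in the horocyclic and focal cases, take $G_0=G$, which fixes a point;
\item in the lineal case, take the index $\le 2$ subgroup preserving each of the two limit points.
\end{itemize}
Then the image of $G_0$ in $\Isom(Y)$ lies in $\Isom(Y)_\xi$. This group is amenable by Lemma~\ref{Lem: Amenable}, which applies because $Y$ is proper, geodesic and has cocompact isometry group.

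It remains to pass amenability back to $S$. Let $S_0=S\cap G_0$, which has finite index in $S$ and therefore still contains $F_2$. The action $S_0\curvearrowright Y$ is proper, so the kernel $N$ of $S_0\to\Isom(Y)$ fixes any point of $Y$ and is therefore finite. Thus $S_0$ is a finite-by-(subgroup of an amenable group) extension, and $S_0$ is amenable by Fact~\ref{subgroupamenable}. This contradicts $F_2\le S_0$ by Fact~\ref{subgroupamenable}(5).

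The main obstacle is making sure Lemma~\ref{Lem: Amenable} applies in the lineal case, where $G$ itself may swap the two endpoints. This is why I pass to the index-$2$ subgroup $G_0$ before invoking the lemma.
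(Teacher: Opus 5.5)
Your proof is correct and follows essentially the same route as the paper. You assume non-properness, get $F_2\le\stab_G(x)$ from Lemma~\ref{Lem: StabF2}, deduce that the stabilizer acts properly on $Y$ via Lemma~\ref{Lem: NoElliptic}, and reach a contradiction through the amenability of $\Isom(Y)_\xi$ (Lemma~\ref{Lem: Amenable}) together with the finiteness of the kernel. Your intersection $S_0=S\cap G_0$ is simply a more explicit version of the paper's ``up to taking an index-2 subgroup'' step.
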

\begin{proof}
    If the action $G\curvearrowright Y$ is elliptic, then the conclusion follows from Lemma \ref{Lem: NoElliptic}. Now we suppose that the action $G\curvearrowright Y$ is either linear or horocyclic or focal. Up to taking an index-2 subgroup, we can assume that $G$ has a global fixed point, say $\xi$, in $\partial Y$.
    
    Suppose to the contrary that the action $G\curvearrowright X$ is not proper. By Lemma \ref{Lem: StabF2}, $\stab_G(x)$ contains $F_2$ for every $x\in X$. Denote $H=\stab_G(x)$ for some $x\in X$. As a subgroup of $G$, $H$ acts also properly on $X\times Y$. Then by Lemma \ref{Lem: NoElliptic}, $H$ acts properly on $Y$. Let $\rho: H\to \Isom(Y)$ be the corresponding group homomorphism. The above paragraph shows that $\rho(H)\le \Isom(Y)_{\xi}$. As a result of Lemma \ref{Lem: Amenable}, $\rho(H)$ is amenable. Since the action of $H$ on $Y$ is proper, $\ker\rho$ is finite. Hence, $H$ is a finite extension of an amenable group which is also amenable. This is a contradiction since $F_2\le H$.
\end{proof}

\begin{remark}\label{Rmk: RegTree}
    For every $n\ge 2$, a $(n+1)$-regular tree is always a proper Gromov-hyperbolic space having a cocompact isometry group since it can be seen as the Bass-Serre tree of $BS(1,n)$.
\end{remark}

The last piece of puzzle to obtain Theorem \ref{IntroThm1} is the following theorem.
\begin{theorem}\cite[Theorem 1.2]{But23}\label{Thm: QTToT}
    Given any cobounded quasi-action of a group $G$ on a metric space $X$, where $X$ is quasi-isometric to some simplicial tree and also quasi-isometrically embeds into a proper metric space, exactly one of the following three cases occurs:
    
    The quasi-action is equivalent to
    \begin{itemize}
        \item some cobounded isometric action on a bounded valence, bushy tree
        \item or some cobounded quasi-action on the real line
        \item or to the trivial isometric action on a point.
    \end{itemize}
\end{theorem}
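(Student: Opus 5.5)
The plan is to reduce everything to the number of ends of a bounded valence tree, and in the bushy case to invoke the quasi-action rigidity theorem of Mosher--Sageev--Whyte. First I would quasi-conjugate the given cobounded quasi-action on $X$, via a quasi-isometry $X\to T$, to a cobounded quasi-action of $G$ on the simplicial tree $T$.

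The first real step is to replace $T$ by a tree of bounded valence. The intended mechanism is this:
\begin{enumerate}
\item $X$ quasi-isometrically embeds into a proper space, so every ball of radius $R$ in $X$ can be covered by a number $N(R,r)$ of $r$-balls that depends only on $R$ and $r$, uniformly over centres. I expect this uniformity to come from coboundedness: every ball is moved by a uniform quasi-isometry close to a ball around a fixed basepoint.
\item This uniform local finiteness transfers to $T$.
\item I would then collapse $T$ at a fixed scale, for instance by taking a maximal $r$-separated net and the tree it spans. I expect the collapsed tree $T'$ to have bounded valence and to remain quasi-isometric to $T$.
\end{enumerate}
I expect this step to be the main obstacle. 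One has to make sure the collapsed object is still a tree and that the bound on its valence is uniform, so that the quasi-action is quasi-conjugate to a quasi-action on a genuine bounded valence tree.

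Next I would split into cases according to the end space of $T'$. Since the number of ends is a quasi-isometry invariant, the three cases below are mutually exclusive; this gives the ``exactly one'' part of the statement.
\begin{enumerate}
\item If $T'$ is bounded, $X$ is quasi-isometric to a point and the quasi-action is equivalent to the trivial action.
\item If $T'$ has exactly one end, then $T'$ is quasi-isometric to a ray. Coarse self-isometries of a ray are uniformly close to the identity, so all orbits would be bounded. This contradicts coboundedness on an unbounded space, so this case cannot occur.
\item If $T'$ has two ends, it is a quasi-line. Quasi-conjugating to $\mathbb R$ gives a cobounded quasi-action on the real line.
\end{enumerate}

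In the remaining case $T'$ has at least three ends. Here I would first use coboundedness to show that $T'$ is bushy: there is a constant $D$ such that every point lies within distance $D$ of a vertex whose complement has at least three unbounded components. The argument is to fix one branching vertex and move it around by the cobounded quasi-action; quasi-isometries coarsely preserve the branching of rays. Then I would apply the Mosher--Sageev--Whyte theorem: a cobounded quasi-action on a bounded valence bushy tree is quasi-conjugate to a cobounded isometric action on some bounded valence bushy tree. This yields the first alternative and completes the trichotomy.
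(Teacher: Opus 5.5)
The paper gives no proof of this statement. It quotes it from Button and uses it as a black box in the proof of Theorem~\ref{MainThm0}, so your outline has to stand on its own. Its architecture is the natural one: coarse bounded geometry from the proper embedding plus coboundedness, then a bounded valence tree, then Mosher--Sageev--Whyte in the bushy case. The step you single out as the main obstacle is in fact unproblematic.

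Suppose every ball $B(x,R)$ in $T$ is covered by $N(R)$ balls of radius $r_0$. This holds only for $r_0$ above a threshold fixed by the quasi-isometry constants, which is all you need. Take a maximal $r$-separated set $\mathcal N$ of vertices with $r>2r_0$, and let $T'$ be its convex hull.
\begin{itemize}
\item $T'$ is convex and $r$-dense, so the inclusion $T'\hookrightarrow T$ is a quasi-isometry.
\item The $T'$-edges at a vertex $w$ correspond to branches of $T-w$ meeting $\mathcal N$. At most one such branch meets $\mathcal N$ within distance $r/2$ of $w$. The points at distance $r/2$ from $w$ towards the remaining branches are pairwise $r$ apart. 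Hence there are at most $N(r)+1$ branches, and $T'$ has bounded valence.
\end{itemize}

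The genuine gap is in the case analysis by ends. The claims ``one end $\Rightarrow$ quasi-isometric to a ray'' and ``two ends $\Rightarrow$ quasi-line'' are false for bounded valence trees. Glue a path of length $|n|$ to the $n$-th vertex of a ray (resp.\ a line). This tree has valence at most $3$ and one (resp.\ two) ends. However, the tip of the $n$-th hair together with the vertices $0$ and $2n$ forms a tripod in which each point is at distance $n$ from the geodesic joining the other two. In $\mathbb R$, one of any three points lies between the other two, so by the Morse lemma there is no quasi-isometry to $[0,\infty)$ or to $\mathbb R$. Counting ends therefore does not give the trichotomy. Coboundedness must also be used in the one- and two-ended cases, and you never use it there.

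The missing fact is that every point of $T'$ lies uniformly close to a bi-infinite geodesic. Here is how to get it.
\begin{itemize}
\item Take geodesic segments of length $2n$ and move their midpoints to within the coboundedness constant of a basepoint $x_0$.
\item By the Morse lemma, the images are uniformly close to geodesics that meet a fixed ball $B(x_0,D)$ and have both endpoints at distance at least $n/K-C$ from $x_0$.
\item Bounded valence plus a diagonal argument gives a bi-infinite geodesic through $B(x_0,D)$, and coboundedness carries it near every point.
\end{itemize}
This rules out the one-ended case, since such a tree has no bi-infinite geodesic. In the two-ended case it puts $T'$ in a bounded neighbourhood of its unique bi-infinite geodesic, which is what makes $T'$ a quasi-line. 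This is the same mechanism you already use for bushiness.

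Separately, your justification for the ray case is wrong as stated: $t\mapsto 2t$ is a quasi-isometry of $[0,\infty)$ that is far from the identity. What holds is that uniform $(K,C)$-quasi-isometries of a ray move $0$ by a bounded amount, and that is what gives bounded orbits. With the fix above, this case no longer needs separate treatment.
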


\begin{theorem}\label{MainThm0}
    Let $G$ be a non-elementary hyperbolic group. The following are equivalent:
     \begin{enumerate}
         \item\label{PPQT} $G$ has property (PPT).
         \item\label{RT} $G$ admits a proper diagonal action on a finite $\ell^1$-product of regular trees.
         \item\label{PPT} There exist regular trees $T_1,\ldots,T_l$ on which $G$ acts by cocompact and general type actions such that the diagonal action of $G$ on the $\ell^1$-product $\prod_{i=1}^lT_i$ is proper.
     \end{enumerate}
\end{theorem}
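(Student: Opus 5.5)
The plan is to prove (\ref{PPT})$\Rightarrow$(\ref{RT})$\Rightarrow$(\ref{PPQT})$\Rightarrow$(\ref{PPT}).

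The implication (\ref{PPT})$\Rightarrow$(\ref{RT}) is immediate. For (\ref{RT})$\Rightarrow$(\ref{PPQT}), suppose $G$ acts properly and diagonally on $\prod_{i=1}^l T_i$ with each $T_i$ regular. We would like each factor action to be cocompact, but an arbitrary action on a regular tree need not be. So we first handle degenerate factors. If $G\curvearrowright T_i$ is elliptic, we drop $T_i$ using Lemma~\ref{Lem: NoElliptic}. If $G\curvearrowright T_i$ is not of general type, Lemma~\ref{Lem: NoHorocyclic} applies, with $T_i$ in the role of $Y$ (a proper hyperbolic space with cocompact isometry group by Remark~\ref{Rmk: RegTree}) and the product of the remaining factors, a connected locally finite graph, in the role of $X$. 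It shows that the remaining factors still carry a proper action, so $T_i$ can be dropped as well. Iterating, we may assume every $G\curvearrowright T_i$ is of general type; at least one factor survives because $G$ is infinite. For each $i$, we then replace $T_i$ by the minimal $G$-invariant subtree $T_i'$. This is a locally finite tree, hence a proper quasi-tree. Distances only shrink under passage to the subtree, so properness of the diagonal action on $\prod T_i'$ is inherited. The action on $T_i'$ is cocompact: $G$ is finitely generated, so its orbit is quasi-convex, and the minimal subtree lies in a bounded neighbourhood of an orbit (the union of axes of loxodromics). Cocompactness then follows from local finiteness. This gives (PPT).

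The main work is (\ref{PPQT})$\Rightarrow$(\ref{PPT}). Let $G$ have (PPT) on proper quasi-trees $X_1,\dots,X_l$ with cocompact actions. We apply Theorem~\ref{Thm: QTToT} to each $G\curvearrowright X_i$; the action is cobounded and $X_i$ is proper. Each action is therefore equivalent to one of three things: a cobounded action on a bounded valence bushy tree $T_i$, a quasi-action on $\R$, or the trivial action on a point. Equivalence is via a $G$-quasi-equivariant quasi-isometry, so properness of the diagonal action is preserved when we replace each $X_i$ by its model, with the line models handled only as quasi-lines $L_i$. Factors equivalent to a point are bounded and are dropped by Lemma~\ref{Lem: NoElliptic}. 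Let $X=\prod T_i$ over the tree factors. This $X$ is a connected locally finite graph, and the action on $X\times\prod L_i$ is proper. Lemma~\ref{Lem: NoLine}, which is phrased for actual quasi-lines, then removes the line factors. There must be at least one tree factor, since otherwise Lemma~\ref{Lem: AllQuaLine} would make $G$ virtually abelian. Next we remove the tree factors on which $G$ is not of general type, using Lemma~\ref{Lem: NoHorocyclic} as before. Every remaining $G\curvearrowright T_i$ is then cobounded and of general type.

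The remaining obstacle is to make each $T_i$ \emph{regular}. A bounded valence tree with a cocompact group action is not regular in general, and one must produce regular trees while keeping properness. I would first try to use the fact that a locally finite tree with cocompact action and bounded valence is quasi-isometric to a regular tree. Better, following Button \cite{But19, But23}, one can modify $T_i$ equivariantly into a regular tree. The natural attempt is to attach $G$-equivariantly finite collections of extra edges and rays at vertices of degree $<n$ until every vertex has degree $n$. However, attaching infinite subtrees would destroy cocompactness of the group action unless done via an induced (tree-of-spaces) construction. Instead, I would try the following: since $G$ acts cocompactly on $T_i$ with finite quotient graph of groups, one can pass to a suitable equivariant subdivision or barycentric modification. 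Alternatively, one can realise the graph of groups with all vertex degrees equal by adding orbits of finite "hair" trees whose leaves are glued along further orbits. Properness is unaffected because all vertex stabilisers in the new tree are contained in, or commensurable with, old ones, and cocompactness persists because only finitely many orbits of cells are added. If equalizing degrees within one tree proves awkward, an alternative is to appeal directly to Button's result that a cocompact action on a bushy bounded valence tree is equivalent to a cocompact action on a regular tree, which would finish (\ref{PPT}).
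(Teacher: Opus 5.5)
Your steps (3)$\Rightarrow$(2) and (2)$\Rightarrow$(1) are fine. The latter is the reduction the paper uses in its step (2)$\Rightarrow$(3): discard elliptic and non-general-type factors via Lemmas~\ref{Lem: NoElliptic} and~\ref{Lem: NoHorocyclic}, then pass to minimal subtrees. One slip there: the minimal subtree is convex, so orbit distances from a basepoint in it are \emph{unchanged}. If they genuinely shrank, properness would not be inherited.

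Your reduction in (1)$\Rightarrow$(3) to cocompact general-type actions on bounded-valence trees with proper diagonal action is also sound. Keeping $X_i$ itself as the quasi-line is more careful than the paper.

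The gap is the final regularisation, and it cannot be closed. Finite hairs create leaves, gluing those leaves creates cycles, and subdivision creates degree-$2$ vertices. Theorem~\ref{Thm: QTToT} gives only bounded-valence bushy trees, and no result upgrades these to regular ones, because none can. Take $G=\Z/5*\Z/7=\langle a\rangle*\langle b\rangle$:
\begin{enumerate}
\item In any action of $G$ on a tree without global fixed point, $\mathrm{Fix}(a)\cap\mathrm{Fix}(b)=\emptyset$; let $[p,q]$ be the bridge between them.
\item Since $5$ and $7$ are prime, no nontrivial power of $a$ (resp.\ $b$) fixes the first edge of $[p,q]$ at $p$ (resp.\ $q$).
\item Hence $G\cdot[p,q]$ is an isometrically embedded copy of the subdivided Bass--Serre tree. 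It is the minimal subtree, and it has vertices of degree $5$ and $7$.
\item A cocompact action on an $n$-regular tree with $n\ge 3$ is minimal; otherwise one escapes arbitrarily far from the minimal subtree.
\end{enumerate}
So $G$ has no cocompact general-type action on any regular tree. Yet it satisfies (1) and (2) via its $(5,7)$-biregular Bass--Serre tree. Item (3) is therefore false as written. The paper's own argument for (2)$\Rightarrow$(3) also stops at the cores $C(T_i)$, which have bounded valence but need not be regular.

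The more serious casualty is (1)$\Rightarrow$(2), the implication that matters for Button's question. You only reach it through the broken (1)$\Rightarrow$(3), so your proposal never proves it. The paper proves it directly, from the reduction you already have: a proper diagonal action on a product of bounded-valence trees. Embed each tree $G$-equivariantly into a regular tree by hanging copies of a fixed rooted regular tree at every vertex of deficient degree, with $g$ carrying the copies at $v$ to those at $gv$. This is what the paper obtains from \cite[Lemma 6.1]{But19}. The old tree is a convex subtree, so orbit distances at a basepoint in it are unchanged and properness survives. Cocompactness is lost, but (2) does not require it. Your own objection to attaching infinite subtrees is exactly why this works for (2) and fails for (3).
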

\begin{proof}
    ``(\ref{PPT})$\Rightarrow$(\ref{PPQT})'' is obvious. It remains to show that ``(\ref{PPQT})$\Rightarrow$(\ref{RT})'' and  ``(\ref{RT})$\Rightarrow$(\ref{PPT})''.

    ``(\ref{PPQT})$\Rightarrow$(\ref{RT})'': Suppose that $G$ has property (PPT). By Theorem \ref{Thm: QTToT}, $G$ admits a proper diagonal action on the $\ell^1$-product $\prod_{i=1}^lT_i\times \prod_{j=1}^kL_j$ where each $T_i$ is a locally finite tree and each $L_j$ is a real line. Since $G$ is non-elementary hyperbolic, Lemma \ref{Lem: NoLine} shows that the induced action of $G$ on $\prod_{i=1}^lT_i$ is still proper. Then the conclusion follows from \cite[Lemma 6.1]{But19}.

    ``(\ref{RT})$\Rightarrow$(\ref{PPT})'': Suppose that $G$ admits a proper diagonal action on the $\ell^1$-product $\prod_{i=1}^lT_i$ of regular trees. By Remark \ref{Rmk: RegTree} and Lemma \ref{Lem: NoHorocyclic}, we can assume that each action $G\curvearrowright T_i$ is of general type. Let $C(T_i)$ be the union of all axes of loxodromic elements in $G$ on $T_i$. It is the unique minimal subtree invariant under the action of $G$. As shown in the proof of \cite[Lemma 6.1]{But19}, we can replace each $T_i$ with its core $C(T_i)$ such that $G$ acts cocompactly on each $C(T_i)$ and the diagonal action $G\curvearrowright \prod_{i=1}^lC(T_i)$ is still proper. This completes the proof.
\end{proof}

The following example shows that the above equivalence does not hold in the most general case.

\begin{example}\label{Exa: LocFin}
    Let $G$ be an infinite locally finite group. As shown in \cite[Example II.7.11]{BH99}, $G$ can act properly on a locally finite tree. However, we claim that $G$ does not have property (PPH). Indeed, since $G$ is a torsion group and a horocyclic action can not be cobounded, any cobounded action of $G$ on a Gromov-hyperbolic space must be elliptic. Together with Lemma \ref{Lem: NoElliptic}, one has that $G$ can not have property (PPH).
\end{example}

\bibliographystyle{amsplain}   
\bibliography{Reference}
\end{document}